\journal{Journal of Multivariate Analysis}
\theoremstyle{plain}
\newtheorem{lemma}{Lemma}
\newtheorem{theorem}{Theorem}
\theoremstyle{definition}
\newtheorem{definition}{Definition}
\newtheorem{remark}{Remark}
\begin{document}

\newcommand{\cB}{\mathcal{B}}
\newcommand{\cC}{\mathcal{C}}
\newcommand{\cD}{\mathcal{D}}
\newcommand{\cE}{\mathcal{E}}
\newcommand{\cF}{\mathcal{F}}
\newcommand{\cH}{\mathcal{H}}
\newcommand{\cN}{\mathcal{N}}
\newcommand{\cS}{\mathcal{S}}
\newcommand{\cL}{\mathcal{L}}
\newcommand{\cP}{\mathcal{P}}

\newcommand{\bC}{\mathbb{C}}
\newcommand{\bD}{\mathbb{D}}
\newcommand{\bE}{\mathbb{E}}
\newcommand{\bN}{\mathbb{N}}
\newcommand{\bP}{\mathbb{P}}
\newcommand{\bQ}{\mathbb{Q}}
\newcommand{\bR}{\mathbb{R}}
\newcommand{\bZ}{\mathbb{Z}}

\newcommand{\bd}{\mathrm{diag}}
\newcommand{\bT}{\mathrm{Tr}}
\newcommand{\br}{\mathrm{rank}}

\newcommand{\red}{\color{red}}
\newcommand{\blue}{\color{blue}}


\begin{frontmatter}

\title{Recent advances on eigenvalues of matrix-valued stochastic processes}

\author[1]{Jian Song}
\author[2]{Jianfeng Yao\corref{mycorrespondingauthor}}
\author[3]{Wangjun Yuan}

\address[1]{School of Mathematics, Shandong University}
\address[2]{Department of Statistics and Actuarial Science,The University of Hong Kong}
\address[3]{Department of Mathematics and Statistics, University of Ottawa}

\cortext[mycorrespondingauthor]{Corresponding author. Email address: \url{jeffyao@hku.hk}}

\begin{abstract}
Since the introduction of Dyson's Brownian motion in early 1960's, there have been a lot of developments in the investigation of stochastic processes on the space of Hermitian matrices. Their properties, especially, the properties of their eigenvalues have been studied in great details. In particular, the limiting behaviors of the eigenvalues are found when the dimension of the matrix space tends to infinity, which connects with random matrix theory.
This survey reviews a selection of results on the eigenvalues of stochastic processes from the literature of the past three decades. For most recent variations of such processes, such as matrix-valued processes driven by fractional Brownian motion or Brownian sheet, the eigenvalues of them are also discussed in this survey.
In the end, some open problems in the area are also proposed. 
\end{abstract}

\begin{keyword} 
  Brownian sheets \sep
  Dyson Brownian motion \sep
  Eigenvalue distribution \sep
  Fractional Brownian motion \sep
  Matrix-valued process \sep
  Squared Bessel particle system \sep
  Wishart process.
  \MSC[2020] Primary 62H10, 60F05 \sep
  Secondary 60H15
\end{keyword}

\end{frontmatter}

\section{Introduction}\label{Sec:intro}

Stochastic processes with values in the space of symmetric matrices have been attracting the attention for some years.
Their introduction is commonly attributed to the celebrated work
\cite{Dyson62} by F. J. Dyson.
By that time, Gaussian matrix ensembles were well known; the distribution of their eigenvalues has a density function of the form
\begin{equation}
  G(x_1,\ldots,x_n) = C \exp\{-\beta W\}, \label{density-F}
  \quad 
	W =W(x_1,\ldots,x_n) = -\sum_{i<j} \ln|x_i-x_j| + \sum_{i}(x_i^2/2a^2), 
\end{equation}
where $\beta, a>0$ are parameters, and $C=C(\beta,a)$ is a
normalization constant.
The distribution \eqref{density-F} also appears in the { Coulomb gas} model: it is the probability  distribution of the positions of $n$ point charges which are free to move on the real line $\bR$ under the forces derived from the potential energy $W$ and in a state of thermodynamic equilibrium at a temperature $T= (k\beta)^{-1}$ ($k$ is the Boltzmann constant).
Note that the equation \eqref{density-F} is static and does not describe the evolution of the position of the point charges before reaching the equilibrium. Dyson brought in the Brownian motion to get a time-dependent model that describes the evolution of the positions from an initial distribution $F_0$. The Brownian motion, also called the { time-dependent Coulomb gas} has a simple structure, and the joint density function $F(x_1,\ldots,x_n; t)$ of the positions of the $n$ point charges at time $t>0$ is fully characterized as a solution to the Smoluchowski equation
\begin{align}
	c \frac{\partial F}{\partial t}
	= \sum_i \left[ \frac1\beta  \frac{\partial^2 F}{\partial x_i^2} - \frac{\partial}{\partial x_i}\{ E(x_i)F\} \right],
\end{align}
where $c$ is a constant and
\begin{align*}
	E(x_i) = - \frac{\partial W}{\partial x_i}
	= \sum_{j\ne i} \frac1{x_j-x_i} - \frac{x_i}{a^2}
\end{align*}
is an { external electric force}. In particular, $F(x_1,\ldots,x_n; t)$ tends to the Coulomb gas distribution $G$ in~\eqref{density-F} as $t \to \infty$.

This extension of Coulomb gas from the static equilibrium state to a dynamical version also applies to the associated Gaussian matrix ensembles.
More precisely, Dyson introduced a stochastic process with values in the space of symmetric matrices, the eigenvalues of which coincide with the dynamical Coulomb gas model.
Amazingly, this process is extremely simple:
its elements are independent Ornstein-Uhlenbeck processes on the underlying field! (The underlying field is $\bR$ for $\beta=1$, $\bC$ for $\beta=2$, and the quaternion field for $\beta=4$). 

This deep connection between stochastic processes with values in the space of symmetric (Hermitian) matrices and the induced dynamical system of its eigenvalues, had been however quite ignored during a while afterwards.
It was revived in the papers \cite{Norris1986,Bru1989} where the idea of Dyson was extended to the space of positive-definite matrices (ellipsoids).
In the subsequent three decades, the study of these stochastic processes and their associated eigenvalue processes has been developed in much depth. Particularly, the symmetric (Hermitian) matrix-valued processes have covered Brownian motion, Ornstein-Uhlenbeck process and fractional Brownian motion.

Instead of considering the $N$ particles (eigenvalues) with $N$ fixed, the limits of the empirical measures of particles when $N$ tends to infinity (high-dimensional limits) were studied in several models. In particular, the high-dimensional limit of the empirical measures of the Dyson's Brownian motion is the famous Wigner's semicircle law, which provides a dynamical version of Wigner's Theorem for GOE and GUE (see, e.g., \cite{Anderson2010}). In this aspect, the study of large particle systems is closely related to the random matrix theory. Moreover, the equation satisfied by the limits of the empirical measures of the Dyson's Brownian motion is the so-called McKean-Vlasov equation, which appears in the study of propagation of chaos for large systems of interacting particles (see \cite{Berman2019, Jabin2018, Serfaty2020}).

Another motivation for studying high-dimensional limits of the empirical measures of eigenvalues arises from free probability theory. By \cite{Biane1997}, the free additive Brownian motion can be viewed as the high-dimensional limit of a matrix Brownian motion with appropriate scaling. Moreover, \cite{Biane1997, Biane1998} developed the stochastic calculus for free Brownian motion. Besides, the non-commutative fractional Brownian motion was introduced in \cite{Nourdin2014}.

There is also a deep connection between matrix-valued  stochastic processes and
multivariate statistical analysis. Here are a few  applications of these
processes in recent statistical literature:
\begin{enumerate}
\item Financial data analysis: multivariate volatility/co-volatility (variance/covariance) between stock returns or interest rates from different markets have been studied recently through Wishart processes, see \cite{Gourieroux06,Gourieroux10a,DaFonseca08,DaFonseca14,Gnoatto12,Gnoatto14,Wu18}.
	
\item Machine learning: an important task in machine learning using kernel functions is the determination of a suitable kernel matrix for a given data analysis problem (\cite{Scholkopf02}). Such determination is	referred as the kernel matrix learning problem. A kernel matrix is in fact a positive definite Gram-matrix of size $N\times N$ where $N$, the sample size of the data, is usually	large. An innovative method for kernel learning is proposed by	\cite{Zhang06} where unknown kernel matrix is modelled by a Wishart process prior. This approach has been followed in \cite{Kondor07,Li09}.
	
\item  Computer vision: real-time computer vision often involves
  tracking of objects of interest. At each time $t$, a target is
  encoded into a $N$-dimensional vector $a_t \in \bR^N $ (feature
  vector). It is therefore	clear that measuring ``distance'' between
  these vectors, say $a_t$ and $a_{t+dt}$ at two consecutive time
  spots $t$ and	$t+dt$, is of crucial importance for object
  tracking. Because the standard Euclidean distance $\|a_{t+dt}
  -a_{t}\|^2$ is rarely optimal, it is more satisfactory to identify a
  better metric of the form $(a_{t+dt} -a_{t})^\intercal M_t (a_{t+dt}
  -a_{t})$ using a suitable positive definite matrix $M_t$.  An
  innovative model where  the process $M_t$ follows a Wishart process 
  is proposed in \cite{Li16}.
\end{enumerate}

This survey reviews a selection of results from the last three decades. In Section~\ref{Sec:Dyson}, we provide a study of  Dyson's Brownian motion with full details. This includes a modern derivation of the process using It\^{o} calculus.
A limit for the processes of empirical eigenvalue measures is derived when the number of eigenvalues, or electric charges, tends to infinity.
Besides, a limiting Gaussian process is derived in order to characterize the fluctuation of the empirical eigenvalue measures around their limit.
In Section~\ref{Sec:positive-definite matrices}, we discuss two specific classes of stochastic processes with values in the space of positive-definite matrices, that is, Brownian motions of ellipsoids and Wishart processes.
In Section \ref{Sec:otherModels}, a more general form of stochastic processes on the space of Hermitian matrices is studied, and a link is also made with some familiar systems of interacting particles.
The following Sections~\ref{Sec:fBm} and \ref{Sec:BrownianSheet} concern extensions of Dyson's Brownian motion in two different directions. The first extension replaces the Brownian motions in the matrix by { fractional} Brownian motions, and the second one by Brownian sheets. Finally in Section \ref{Sec:Open problems}, we conclude with a discussion on open problems related to the results introduced in the preceding sections.

\section{Dyson's Brownian motion} \label{Sec:Dyson}

In this section, we mainly focus on the Dyson's Brownian motion. We discuss the system of SDEs satisfied by Dyson's Brownian motion in Section \ref{Sec:DBM fix N} and the limiting behaviors of the eigenvalue empirical measure process in Section \ref{Sec:DBM limit N}.

\subsection{Finite-dimensional results} \label{Sec:DBM fix N}

Throughout the survey, we denote the complex imaginary by $\iota = \sqrt{-1}$.

\begin{definition} \label{Def-Dyson}
	Let $\{B_{i,j}(t), \tilde{B}_{i,j}(t), 1 \le i \le j \le N\}$ be a family of i.i.d. real valued standard Brownian motions. Let $H^{N,\beta}(t) = \left( H^{N,\beta}_{k,l}(t) \right)_{1 \le k \le l \le N}$ be a real symmetric ($\beta=1$) or complex Hermitian ($\beta=2$) $N \times N$ matrix-valued process with entries
	\begin{align*}
		H^{N,\beta}_{k,l}(t) =
			\dfrac{1}{\sqrt{\beta N}} \left( B_{k,l}(t) + \iota
            (\beta-1) \tilde{B}_{k,l}(t) \right) {\large 1}_{\{ k < l\}}
            + 
			\dfrac{\sqrt{2}}{\sqrt{\beta N}} B_{l,l}(t) {\large 1}_{\{ k = l\}}.
	\end{align*}
	Then $H^{N,1}(t)$ is a real symmetric matrix Brownian motion and $H^{N,2}(t)$ is a complex Hermitian matrix Brownian motion.
\end{definition}

The following results state that the eigenvalue processes of real symmetric or complex Hermitian matrix Brownian motion never collide almost surely and are characterized by a system of stochastic differential equations (SDEs).

\begin{theorem}[\cite{Anderson2010}, Theorem 4.3.2] \label{Thm-Dyson SDE}
	Let $X^{N,\beta}(0)$ be a real symmetric ($\beta=1$) or complex Hermitian ($\beta=2$) $N \times N$ deterministic matrix and let $X^{N,\beta}(t) = X^{N,\beta}(0) + H^{N,\beta}(t)$. Let $\lambda_1^{N,\beta}(t) \ge \lambda_2^{N,\beta}(t) \ge \cdots \ge \lambda_N^{N,\beta}(t)$ be the ordered eigenvalue processes of $X^{N,\beta}(t)$. Denote the first collision time of the eigenvalue processes by
	\begin{align} \label{eq-first collision time}
		\tau_{N,\beta} = \inf \left\{ t>0: \exists \ i \neq j, ~\lambda_i^{N,\beta}(t) = \lambda_j^{N,\beta}(t) \right\}.
	\end{align}
	Then
    $
		\bP \left( \tau_{N,\beta} = + \infty \right) = 1.
	$
	Furthermore, the ordered eigenvalue processes $\lambda_1^{N,\beta}(t) > \cdots > \lambda_N^{N,\beta}(t)$ are the unique solution to the following system of SDEs:
	\begin{align} \label{eq-Dyson BM SDE}
		d\lambda_i^{N,\beta}(t)
		= \dfrac{\sqrt{2}}{\sqrt{\beta N}} dW_i(t)
		+ \dfrac{1}{N} \sum_{j:j\neq i} \dfrac{dt}{\lambda_i^{N,\beta}(t) - \lambda_j^{N,\beta}(t)}, ~ i \in\{ 1, \ldots, N\}.
	\end{align}
	Here, $\{W_1(t), \ldots, W_N(t)\}$ is a family of independent standard Brownian motions.
\end{theorem}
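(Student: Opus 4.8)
The plan is to combine It\^o's formula applied to the eigenvalue map with a careful analysis of the collision times, following the classical strategy for Dyson's Brownian motion. First I would work on the event where the eigenvalues are distinct, say on the open set $\Delta = \{\mu_1 > \cdots > \mu_N\}$ of symmetric/Hermitian matrices with simple spectrum. On $\Delta$ the map $X \mapsto (\lambda_1(X),\ldots,\lambda_N(X))$ is smooth, and standard perturbation theory gives the first and second derivatives of each eigenvalue with respect to the matrix entries in terms of the spectral decomposition: if $u_i$ is the unit eigenvector for $\lambda_i$, then $\partial \lambda_i / \partial X_{kl}$ involves the components of $u_i$, while the Hessian produces the familiar sum $\sum_{j\ne i} 1/(\lambda_i-\lambda_j)$ coming from second-order perturbation. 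Feeding the semimartingale $X^{N,\beta}(t) = X^{N,\beta}(0) + H^{N,\beta}(t)$ into It\^o's formula, the martingale part collapses (after computing the quadratic variations of the entries of $H^{N,\beta}$, which are where the factors $\sqrt{2}/\sqrt{\beta N}$ and the $\beta$-dependence enter) into a single Brownian motion $W_i$ for each $i$, with $d\langle W_i\rangle_t = dt$, and one checks $d\langle W_i, W_j\rangle_t = 0$ for $i \ne j$ by a direct computation using orthonormality of the eigenvectors. The drift part assembles into exactly $\frac{1}{N}\sum_{j\ne i} dt/(\lambda_i - \lambda_j)$. This establishes that, up to the first collision time $\tau_{N,\beta}$, the ordered eigenvalues satisfy \eqref{eq-Dyson BM SDE}.

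The main obstacle — and the technical heart of the argument — is showing $\bP(\tau_{N,\beta} = +\infty) = 1$, i.e. that collisions never occur. The standard approach is a Lyapunov/comparison argument: consider the process $\beta N \sum_{i<j} \ln\frac{1}{\lambda_i - \lambda_j}$ (or, to handle blow-up at infinity simultaneously, the full energy $W$ from \eqref{density-F}) and apply It\^o's formula to it on the interval $[0, \tau_{N,\beta})$. The key computation is that the It\^o correction and the drift combine so that the finite-variation part of this functional is bounded above (the troublesome $\sum 1/(\lambda_i-\lambda_j)^2$ terms from the second derivative of the log cancel against the drift contributions, leaving only lower-order terms controllable on compact time intervals). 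One then argues that if $\tau_{N,\beta} < \infty$ with positive probability, the functional would tend to $+\infty$ along a sequence approaching $\tau_{N,\beta}$, contradicting the supermartingale-type bound (via, e.g., Fatou or an optional-stopping estimate at $\tau_{N,\beta}\wedge t \wedge \sigma_R$ where $\sigma_R$ localizes). A small subtlety is that one should work first with the stopping times $\tau^\epsilon$ at which the minimal gap hits $\epsilon$, derive the estimate there, and let $\epsilon \downarrow 0$; one also localizes in space to justify It\^o's formula since the eigenvalue map is only locally smooth on $\Delta$.

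Finally I would address uniqueness of the solution to the system \eqref{eq-Dyson BM SDE}. On the domain $\Delta$ the drift coefficients $b_i(\lambda) = \frac{1}{N}\sum_{j\ne i} 1/(\lambda_i - \lambda_j)$ are locally Lipschitz (indeed smooth) and the diffusion coefficient is constant, so pathwise uniqueness holds up to the explosion time or the first exit from $\Delta$; combined with the no-collision result just established (applied to any strong solution, via the same Lyapunov estimate, to rule out the solution leaving $\Delta$), this gives global strong existence and uniqueness. The ordering $\lambda_1 > \cdots > \lambda_N$ is preserved because the solution stays in $\Delta$. I expect the It\^o-formula bookkeeping in the first paragraph to be routine once the perturbation formulas are in hand, the uniqueness in the last paragraph to be standard SDE theory, and essentially all the real work to be concentrated in the non-collision estimate, where the precise algebraic cancellation between the It\^o term and the singular drift is what makes the argument close.
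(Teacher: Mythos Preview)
Your proposal is correct and follows essentially the same three-step strategy as the paper: derive the SDE on the non-collision set via It\^o's formula and the spectral decomposition, prove non-collision via the log-Vandermonde Lyapunov functional (the paper packages this as McKean's argument, Lemma~\ref{Lemma-McKean argument}, and makes the exact algebraic cancellation you anticipate explicit, so that $dU$ is a pure local martingale for $\beta=1$), and obtain uniqueness from the locally Lipschitz drift (the paper does this by an explicit truncation $\psi_R$ of the singular drift). The only cosmetic difference is that the paper carries out Step~1 in Stratonovich form applied to $X = PDP^\intercal$ rather than via the Hadamard-type perturbation formulas you propose; both routes are standard and equivalent.
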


\begin{proof}[\textbf{\upshape Proof:}]
	The proof is motivated by \cite[Theorem 3,5]{Graczyk2013} and \cite[Lemma 4.3.3]{Anderson2010}. We only consider the real symmetric case $\beta = 1$. The complex Hermitian case $\beta = 2$ is similar and thus is omitted. Since the dimension $N$ is fixed, we may omit both $N$ and $\beta$ on subscript and superscript without ambiguity. For simplicity, we only give a proof under the condition $\lambda_1(0) > \cdots > \lambda_N(0)$. For the case that $X(0)$ has collision eigenvalues, we refer the interested readers to \cite[Page 257]{Anderson2010}. We divide the proof into three steps.
	
	\vspace*{1\baselineskip}
	
	\noindent {Step 1:} Derivation of the system of SDEs for eigenvalue processes before the first collision time by It\^{o} calculus and martingale theory.
	
	We may use the Stratonovich differential notation, which can be founded in, for example, \cite[Chapter III]{Ikeda1981}. For two $N \times N$ matrices $X$ and $Y$, we have $X \circ dY = X dY + \frac{1}{2} dXdY$, where $XdY$ is the It\^{o} differential, $X \circ dY$ is the Stratonovich differential and $dXdY = d\langle X, Y \rangle$. By matrix multiplication, for three $N \times N$ matrices $X, Y$ and $Z$, one can verify that
	\begin{align*}
		& dX \circ (YZ) = (dX \circ Y) \circ Z = dX YZ + \dfrac{1}{2} \left( dXdY Z + dX Y dZ \right), \\
		& (X \circ dY) \circ Z = X \circ (dY \circ Z) = X dY Z + \dfrac{1}{2} \left( dXdY Z + X dYdZ \right), \\
		& (X \circ dY)^\intercal = dY^\intercal \circ X^\intercal.
	\end{align*}
	Moreover, by It\^{o} formula and matrix multiplication, one can verify that
	\begin{align} \label{eq-Ito for Stratonovich}
		d(XYZ) = dX \circ YZ + X \circ dY \circ Z + XY \circ dZ.
	\end{align}
	
	For a real symmetric matrix process $X(t)$, consider its spectral decomposition $X(t) = P(t) D(t) P(t)^\intercal$, where $D(t)$ is a diagonal matrix of eigenvalues of $X(t)$ ordered decreasingly, and $P(t)$ is an orthogonal matrix of eigenvectors of $X(t)$. According to \cite{Norris1986}, the matrices $D(t), P(t)$ can be chosen as smooth functions of $X(t)$ for $t < \tau_{N,\beta}$. Let $Q(t)$ be the matrix-valued processes satisfying
	\begin{align*}
		dQ(t) = P(t)^{-1} \circ dP(t) = P(t)^\intercal \circ dP(t).
	\end{align*}
	The process $Q(t)$ is known as the stochastic logarithm of $P(t)$. By the It\^{o} formula \eqref{eq-Ito for Stratonovich}, we have the following identity
	\begin{align} \label{eq-skew symmetric of Q}
		0 = dI_N = d(P(t)^\intercal P(t)) = dQ(t) + dQ(t)^\intercal.
	\end{align}
	
	Applying the It\^{o} formula \eqref{eq-Ito for Stratonovich} to the spectral decomposition of $X(t)$ and using \eqref{eq-skew symmetric of Q}, we have
	\begin{align} \label{eq-Ito for spectral of Dyson}
		dD(t) &= d P(t)^\intercal \circ X(t) P(t) + P(t)^\intercal \circ d X(t) \circ P(t) + P(t)^\intercal X(t) \circ d P(t) \nonumber \\
		&= d P(t)^\intercal \circ P(t) D(t) + P(t)^\intercal \circ d X(t) \circ P(t) + D(t) P(t)^\intercal \circ d P(t) \nonumber \\
		&= -d Q(t) \circ D(t) + P(t)^\intercal \circ d X(t) \circ P(t) + D(t) \circ d Q(t).
	\end{align}
	By considering the non-diagonal entries of \eqref{eq-Ito for spectral of Dyson}, we have
	\begin{align} \label{eq-entries of Q}
		dQ_{ij}(t) = - \left( P(t)^\intercal \circ d X(t) \circ P(t) \right)_{ij} \circ \dfrac{1}{\lambda_i(t) - \lambda_j(t)},
		~ i \neq j.
	\end{align}
	On the other hand, the diagonal entries of \eqref{eq-Ito for spectral of Dyson} can be written as
	\begin{align} \label{eq-1.6-lambda SDE}
		d\lambda_i(t) =& \left( P(t)^\intercal \circ d X(t) \circ P(t) \right)_{ii} \nonumber \\
		=& \left( P(t)^\intercal d X(t) P(t) \right)_{ii}
		+ \dfrac{1}{2} \Big( dP(t)^\intercal d X(t) P(t) + P(t)^\intercal d X(t) dP(t) \Big)_{ii}.
	\end{align}
	Recalling Definition \ref{Def-Dyson}, one can see that $\{ \left( P(t)^\intercal \circ d X(t) \circ P(t) \right)_{ii} \}_{1 \le i \le N}$ is a family of local martingales with quadratic covariation
	\begin{align*}
		& \left( P(t)^\intercal \circ d X(t) \circ P(t) \right)_{ii} \left( P(t)^\intercal \circ d X(t) \circ P(t) \right)_{jj} 
		= \left( P(t)^\intercal d X(t) P(t) \right)_{ii} \left( P(t)^\intercal d X(t) P(t) \right)_{jj} \\
		=& \sum_{k,l=1}^N P_{ki}(t) P_{li}(t) dX_{kl}(t) \sum_{k',l'=1}^N P_{k'j}(t) P_{l'j}(t) dX_{k'l'}(t)  
		= \sum_{k,l,k',l'=1}^N P_{ki}(t) P_{li}(t) P_{k'j}(t) P_{l'j}(t) \left( \text{1}_{[k=k']} \text{1}_{[l=l']} + \text{1}_{[k=l']} \text{1}_{[l=k']} \right) \dfrac{dt}{N}  \\
		=& \dfrac{2dt}{N} \left( \sum_{k=1}^N P_{ki}(t) P_{kj}(t) \right)^2
		= \dfrac{2}{N} \text{1}_{[i=j]} dt,
	\end{align*}
	where we use the orthogonality of the columns of the matrix $P(t)$. Thus, by Knight's theorem, there exists a family of independent standard $1$-dimensional Brownian motions $\{W_1(t), \ldots, W_N(t)\}$, such that
	\begin{align} \label{eq-1.7-martingale term of 1.6}
		\left( P(t)^\intercal \circ d X(t) \circ P(t) \right)_{ii}
		= \dfrac{\sqrt{2}}{\sqrt{N}} dW_i(t).
	\end{align}
	Note that $X(t)$ is symmetric, by \eqref{eq-skew symmetric of Q} and \eqref{eq-entries of Q}, we have
	\begin{align} \label{eq-1.8-dPdXP}
		& \dfrac{1}{2} \Big( dP(t)^\intercal d X(t) P(t) + P(t)^\intercal d X(t) dP(t) \Big)_{ii} \nonumber 
		= \Big( dP(t)^\intercal d X(t) P(t) \Big)_{ii} \nonumber \\
		=~& \Big( dP(t)^\intercal P(t) P(t)^\intercal d X(t) P(t) \Big)_{ii} 
		= \Big( dQ(t)^\intercal \left( P(t)^\intercal \circ d X(t) \circ P(t) \right) \Big)_{ii} \nonumber \\
		=~& -\sum_{j=1}^N dQ_{ij}(t) \left( P(t)^\intercal \circ dX(t) \circ P(t) \right)_{ji} 
		= \sum_{j:j\neq i} \dfrac{\left( P(t)^\intercal d X(t) P(t) \right)_{ij} \left( P(t)^\intercal dX(t) P(t) \right)_{ji}}{\lambda_i(t) - \lambda_j(t)}.
	\end{align}
	For $i \neq j$, we have
	\begin{align} \label{eq-1.9-Num of 1.8}
		& \left( P(t)^\intercal d X(t) P(t) \right)_{ij} \left( P(t)^\intercal d X(t) P(t) \right)_{ji} 
		= \sum_{k,l=1}^N P_{ki}(t) P_{lj}(t) dX_{kl}(t) \sum_{k',l'=1}^N P_{k'j}(t) P_{l'i}(t) dX_{k'l'}(t) \nonumber \\
		=~ & \sum_{k,l,k',l'=1}^N P_{ki}(t) P_{lj}(t) P_{k'j}(t) P_{l'i}(t) \left( \text{1}_{[k=k']} \text{1}_{[l=l']} + \text{1}_{[k=l']} \text{1}_{[l=k']} \right) \dfrac{dt}{N} \nonumber \\
		=~ & \dfrac{dt}{N} \left( \sum_{k=1}^N P_{ki}(t) P_{kj}(t) \right)^2
		+ \dfrac{dt}{N} \left( \sum_{k=1}^N P_{ki}(t)^2 \right) \left( \sum_{l=1}^N P_{lj}(t)^2 \right) 		= \dfrac{dt}{N}.
	\end{align}
	Substituting \eqref{eq-1.9-Num of 1.8} to \eqref{eq-1.8-dPdXP}, we have
	\begin{align} \label{eq-1.10-finite variation term of 1.6}
		\dfrac{1}{2} \Big( dP(t)^\intercal d X(t) P(t) + P(t)^\intercal d X(t) dP(t) \Big)_{ii}
		= \dfrac{1}{N} \sum_{j:j\neq i} \dfrac{dt}{\lambda_i(t) - \lambda_j(t)}.
	\end{align}
	Therefore, \eqref{eq-Dyson BM SDE} follows from \eqref{eq-1.6-lambda SDE}, \eqref{eq-1.7-martingale term of 1.6} and \eqref{eq-1.10-finite variation term of 1.6}.
	
	\vspace*{1\baselineskip}
	
	\noindent {Step 2:} We prove that the system of SDEs \eqref{eq-Dyson BM SDE} has a unique strong solution before its first collision time by approximating the singular drift with regular functions. For the existence and uniqueness of SDE, we refer to \cite{Karatzas1998}.
	
	For $R > 0$, define
	\begin{align} \label{eq-def-psi}
		\psi_R(x) =
			x^{-1}  {\large 1}_{\{ |x| \ge R^{-1}\}}
		    + 	R^2 x  {\large 1}_{\{ |x| < R^{-1}\}}.
	\end{align}
	One can easily check that $\psi_R(x)$ is continuous on $\bR$ satisfying
    $		|\psi_R(x)| \le (1+R^2) (1+|x|^2).
    $
	Consider the following system of SDEs
	\begin{align} \label{eq-1.11-Dyson SDE approximation}
		d\lambda_i^R(t)
		= \dfrac{\sqrt{2}}{\sqrt{N}} dW_i(t)
		+ \dfrac{1}{N} \sum_{j:j\neq i} \psi_R \left( \lambda_i^R(t) -
        \lambda_j^R(t) \right) dt, ~ i \in \{ 1, \ldots, N\},
	\end{align}
	with initial condition $\lambda_i^R(0) = \lambda_i(0)$ for $1 \le i \le N$. Noting that for each $R > 0$, the coefficient functions in \eqref{eq-1.11-Dyson SDE approximation} are global Lipschitz and of linear growth, the existence of the strong solution of \eqref{eq-1.11-Dyson SDE approximation} follows from \cite[Theorem 2.9]{Karatzas1998}, and moreover, by \cite[Theorem 2.5]{Karatzas1998}, we also have the strong uniqueness.
	
	For $R>0$, let
	\begin{align*}
		\tau(R) = \inf \left\{ t>0: \min_{i \neq j} \left| \lambda_i^R(t) - \lambda_j^R(t) \right| < R^{-1} \right\}.
	\end{align*}
	Then $\tau(R)$ is a stopping time which is increasing with respect to $R$. We denote $\tau(+\infty) = \lim_{R \to +\infty} \tau(R)$, which may be $+\infty$. Let $R_0$ be a positive number such that $R_0^{-1} = \min_{i \neq j} |\lambda_i(0) - \lambda_j(0)|$. For $R_1 > R_2 > R_0$, we have the following observation
	\begin{align*}
		\lambda_i^{R_1}(t) = \lambda_i^{R_2}(t), ~ \forall t \le \tau(R_2), ~ \forall 1 \le i \le N.
	\end{align*}
	Thus, for $t < \tau(+\infty)$, we can define the processes $\lambda_i^{\infty}(t)$ in a consistent way by
	\begin{align*}
		\lambda_i^{\infty}(t) = \lambda_i^R(t), ~\text{if} ~ t < \tau(R)
	\end{align*}
	for $1 \le i \le N$. Then, recalling the definition \eqref{eq-def-psi} of $\psi_R$, $(\lambda_1^{\infty}(t), \ldots, \lambda_N^{\infty}(t))$ solves \eqref{eq-Dyson BM SDE} for $t < \tau(+\infty)$. Note that for any strong solution of \eqref{eq-Dyson BM SDE}, it solves \eqref{eq-1.11-Dyson SDE approximation} before the time when the least distance of its entries reaches $R^{-1}$ for $R > R_0$. Thus, the strong uniqueness of \eqref{eq-Dyson BM SDE} follows from the strong uniqueness of \eqref{eq-1.11-Dyson SDE approximation} by letting $R \to \infty$.
	
	\vspace*{1\baselineskip}
	
	\noindent {Step 3:} We prove the almost sure non-collision of the eigenvalue processes by McKean’s argument (\cite[Proposition 4.3]{Mayerhofer2011}, see also \cite{McKean}).
	
	From Step 1 and Step 2, we can see that the eigenvalue processes of $H^{N,\beta}(t)$ is the unique strong solution to \eqref{eq-Dyson BM SDE}, and thus $\tau_{N,\beta}$ given by \eqref{eq-first collision time} is also the collision time for the strong solution to \eqref{eq-Dyson BM SDE}. For $t < \tau_{N,\beta}$, define
	\begin{align} \label{eq-U for noncollision}
		U(t) = \sum_{i<j} \ln |\lambda_i(t) - \lambda_j(t)|,
	\end{align}
	then by \eqref{eq-Dyson BM SDE} and It\^{o} formula, noting that $d\langle \lambda_i(t), \lambda_j(t) \rangle = 0$ for $i \neq j$, we have
	\begin{align}
		dU(t) =& \sum_{i \neq j} \dfrac{d\lambda_i(t)}{\lambda_i(t) - \lambda_j(t)} - \dfrac{1}{2} \sum_{i \neq j} \dfrac{d\langle \lambda_i(t) \rangle}{\left( \lambda_i(t) - \lambda_j(t) \right)^2}  \\
		=& \dfrac{\sqrt{2}}{\sqrt{N}} \sum_{i \neq j} \dfrac{dW_i}{\lambda_i(t) - \lambda_j(t)}
		+ \dfrac{1}{N} \sum_{i \neq j} \sum_{l:l \neq i} \dfrac{dt}{\left( \lambda_i(t) - \lambda_j(t) \right) \left( \lambda_i(t) - \lambda_l(t) \right)} 
		- \dfrac{1}{N} \sum_{i \neq j} \dfrac{dt}{\left( \lambda_i(t) - \lambda_j(t) \right)^2} \nonumber \\
		=& \dfrac{\sqrt{2}}{\sqrt{N}} \sum_{i \neq j} \dfrac{dW_i}{\lambda_i(t) - \lambda_j(t)}
		+ \dfrac{1}{N} \sum_{i \neq j \neq l \neq i} \dfrac{dt}{\left( \lambda_i(t) - \lambda_j(t) \right) \left( \lambda_i(t) - \lambda_l(t) \right)} \nonumber \\
		=& \dfrac{\sqrt{2}}{\sqrt{N}} \sum_{i \neq j} \dfrac{dW_i}{\lambda_i(t) - \lambda_j(t)}
		+ \dfrac{1}{N} \sum_{i \neq j \neq l \neq i} \dfrac{\lambda_l(t) - \lambda_j(t)}{\left( \lambda_i(t) - \lambda_j(t) \right) \left( \lambda_j(t) - \lambda_l(t) \right) \left( \lambda_l(t) - \lambda_i(t) \right)} dt \nonumber \\
		=& \dfrac{\sqrt{2}}{\sqrt{N}} \sum_{i \neq j} \dfrac{dW_i}{\lambda_i(t) - \lambda_j(t)} 
		+ \dfrac{1}{3N} \sum_{i \neq j \neq l \neq i} \dfrac{\left( \lambda_l(t) - \lambda_j(t) \right) + \left( \lambda_i(t) - \lambda_l(t) \right) + \left( \lambda_j(t) - \lambda_i(t) \right)}{\left( \lambda_i(t) - \lambda_j(t) \right) \left( \lambda_j(t) - \lambda_l(t) \right) \left( \lambda_l(t) - \lambda_i(t) \right)} dt 
		= \dfrac{\sqrt{2}}{\sqrt{N}} \sum_{i \neq j} \dfrac{dW_i}{\lambda_i(t) - \lambda_j(t)}.\nonumber
	\end{align}
	Here, we use the symmetry to change the summation index in the fifth equality. Therefore, by Lemma \ref{Lemma-McKean argument} below, we have $\tau_{N,\beta} = + \infty$ almost surely.
	The proof of Theorem \ref{Thm-Dyson SDE} is complete.
\end{proof}

The following lemma is used in the Step 3 of the proof and is known as the McKean’s argument, which can be found in \cite{Mayerhofer2011}.

\begin{lemma}[McKean’s argument, \cite{Mayerhofer2011}, Proposition 4.3]
  \label{Lemma-McKean argument}
  Let $Z = \{Z(t); 0 \le t < \infty\}$ be an adapted $\bR_+$-valued
  stochastic process that is right-continuous with finite left-hand
  limits (RCLL) on a stochastic interval $[0,\tau_0)$ with $Z_0>0$,
  where
  \[  \tau_0 = \inf \{ s>0: Z_{s-} = 0 \}.  \]
  Suppose that there exists a continuous function $h$ satisfying the following:
  \begin{enumerate}
  \item[{\em (i)}] For all $t \in [0,\tau_0)$, we have
    $	  h(Z(t)) = h(Z(0)) + M(t) + P(t)$,
		where $M$ is a continuous local martingale on $[0,\tau_0)$
          with $M(0) = 0$, and $P$ is an adapted RCLL process on
          $[0,\tau_0)$ such that almost surely and  for each $T>0$,
            $\displaystyle
			\inf_{t \in [0,\tau_0 \wedge T)} P(t) > - \infty.$
		\item [{\em (ii)}] $\lim_{z \downarrow 0} h(z) = - \infty$.
	\end{enumerate}
	Then $\tau_0 = \infty$ almost surely.
\end{lemma}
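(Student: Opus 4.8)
The plan is to run McKean's classical argument by contradiction: if $\tau_0$ were finite on a set of positive probability, then the decomposition in (i) would force the continuous local martingale $M$ to converge to $-\infty$ along $t\uparrow\tau_0$ on that set, which is impossible. First I would reduce to a bounded horizon: if $\bP(\tau_0<\infty)>0$, choose $T>0$ with $p:=\bP(\tau_0\le T)>0$ and set $\Omega_T=\{\tau_0\le T\}$, so that it suffices to show $p=0$. Note that (i) forces $Z(t)>0$ for all $t\in[0,\tau_0)$ (otherwise the left-hand side would be $-\infty$ while the right-hand side is finite), so $h(Z(t))$ is well defined on $[0,\tau_0)$; moreover, by the definition of $\tau_0$, on $\{\tau_0<\infty\}$ one has $Z(t)\to Z(\tau_0-)=0$ as $t\uparrow\tau_0$ (this is immediate when $Z$ has continuous paths, which is the situation in all the eigenvalue systems of this survey). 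Hence, by the continuity of $h$ together with (ii), $h(Z(t))\to-\infty$ as $t\uparrow\tau_0$ on $\Omega_T$. Since $\tau_0\wedge T=\tau_0$ on $\Omega_T$, property (i) gives $\inf_{t\in[0,\tau_0)}P(t)>-\infty$ almost surely on $\Omega_T$, and combining this with $M(t)=h(Z(t))-h(Z(0))-P(t)$ we get that, almost surely on $\Omega_T$,
\[
\lim_{t\uparrow\tau_0}M(t)=-\infty.
\]

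The heart of the proof is then to rule this out. Since $M$ is a continuous local martingale on $[0,\tau_0)$ with $M(0)=0$, the Dambis--Dubins--Schwarz time-change theorem (see, e.g., \cite{Karatzas1998}) provides a standard Brownian motion $B$, on a possibly enlarged probability space, with $M(t)=B(\langle M\rangle_t)$ for $t<\tau_0$. Put $V:=\langle M\rangle_{\tau_0-}=\lim_{t\uparrow\tau_0}\langle M\rangle_t\in[0,+\infty]$. On $\{V<\infty\}$ the continuity of $B$ gives $\lim_{t\uparrow\tau_0}M(t)=B(V)$, a finite limit; on $\{V=\infty\}$ the oscillation of Brownian motion gives $\liminf_{t\uparrow\tau_0}M(t)=\liminf_{u\to\infty}B(u)=-\infty$ while $\limsup_{t\uparrow\tau_0}M(t)=+\infty$, so the limit does not exist. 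In either case $\bP(\lim_{t\uparrow\tau_0}M(t)=-\infty)=0$, which forces $\bP(\Omega_T)=p=0$. As $T>0$ was arbitrary, $\bP(\tau_0=\infty)=1$.

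I expect the main obstacle to be not the probabilistic core --- which is the clean fact above that a continuous local martingale cannot converge to $-\infty$ --- but the bookkeeping on the stochastic interval $[0,\tau_0)$: justifying $Z(\tau_0-)=0$ on $\{\tau_0<\infty\}$ (the point where one really uses, or should assume, that $Z$ does not reach $0$ by a downward jump at $\tau_0$), and making the Dambis--Dubins--Schwarz representation precise for a local martingale defined only up to $\tau_0$. The latter is handled by localizing with a sequence $\rho_n\uparrow\tau_0$ along which $M$ is a genuine martingale, forming $\langle M\rangle$ on $[0,\tau_0)$, and, on the event $\{V<\infty\}$, extending the time-changed process past $V$ by an independent Brownian motion. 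These technicalities are routine in the continuous-path case relevant here and are carried out in full generality in \cite{Mayerhofer2011}; note also that one-sided boundedness of $P$ is all that is needed above, since it only serves to transfer the divergence $h(Z(t))\to-\infty$ onto $M(t)$.
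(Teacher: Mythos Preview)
The paper does not actually prove this lemma; it merely states it with a citation to \cite{Mayerhofer2011} and uses it as a black box in Step~3 of the proof of Theorem~\ref{Thm-Dyson SDE}, so there is no paper proof to compare against. Your argument is the standard McKean--Dambis--Dubins--Schwarz route and is correct in the continuous-path setting relevant to this survey: reduce to a finite horizon, show that on $\{\tau_0\le T\}$ the divergence $h(Z(t))\to-\infty$ together with the lower bound on $P$ forces $M(t)\to-\infty$, and then rule this out because a continuous local martingale either has a finite limit (if its bracket stays bounded) or oscillates between $\pm\infty$ (if not). The caveats you flag --- that $Z(\tau_0-)=0$ on $\{\tau_0<\infty\}$ requires care in the genuinely RCLL case, and that the time-change representation must be set up on a stochastic interval --- are exactly the technical points handled in \cite{Mayerhofer2011}, and it is appropriate that you identify them rather than gloss over them.
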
 

\begin{remark}
	The argument for non-collision in \cite{Anderson2010} is different. For $M>0$, it is shown that the first time for $U(t)$ with $\lambda_i(t)$ replaced by the $\lambda_i^R(t)$ to exceed $M$ is greater than any positive number almost surely via Markov inequality and Borel–Cantelli Lemma.
\end{remark}

\begin{remark}
	The unique solution to \eqref{eq-Dyson BM SDE} is known as Dyson Brownian motion.
\end{remark}

\begin{remark}
	The process given in \eqref{eq-Dyson BM SDE} with general $\beta \in (0,\infty)$ is known as $\beta$-Dyson Brownian motion. By the same argument used in Step 2, one can show that \eqref{eq-Dyson BM SDE} with general $\beta \in (0,\infty)$ has a unique strong solution before the first collision time. Moreover, applying It\^{o} formula to $U(t)$ given in \eqref{eq-U for noncollision}, we have
	\begin{align*}
		dU(t) = \dfrac{\sqrt{2}}{\sqrt{\beta N}} \sum_{i \neq j} \dfrac{dW_i}{\lambda_i^{N,\beta}(t) - \lambda_j^{N,\beta}(t)}
		+ \left( 1 - \dfrac{1}{\beta} \right) \dfrac{1}{N} \sum_{i \neq j} \dfrac{dt}{\big( \lambda_i^{N,\beta}(t) - \lambda_j(t)^{N,\beta} \big)^2}.
	\end{align*}
	Then the non-collision of the system of particles $\lambda_1^{N,\beta}(t) \ge \cdots \ge \lambda_N^{N,\beta}(t)$ follows from McKean's argument for the case $\beta \ge 1$. It is well known that the $\beta$-Dyson Brownian motion has collisions for $\beta \in (0,1)$ (see, e.g., \cite[Remark 3]{Graczyk2013}).
\end{remark}

Real symmetric matrix whose entries are i.i.d. Ornstein-Uhlenbeck processes (real symmetric matrix OU process) was considered in \cite{Chan1992}. Let $X^N(t)$ be a symmetric $N \times N$ matrix-valued process that solves the following matrix SDE
\begin{align} \label{eq-matrix SDE OU}
	dX^N(t) = \dfrac{1}{2\sqrt{N}} \left( dB(t) + dB(t)^\intercal \right) - \dfrac{1}{2} X^N(t) dt,
\end{align}
where $B(t)$ is a $N \times N$ matrix Brownian motion. Then the entries $\{X^N_{i,j}(t)\}_{1 \le i \le j \le N}$ are independent Ornstein-Uhlenbeck processes with invariant distribution $N(0,(1+\delta_{ij})/(2N))$. By It\^{o} calculus and martingale theory, \cite{Chan1992} derived the following system of SDEs for the eigenvalue processes $\{\lambda_i^N(t)\}_{1 \le i \le N}$ of $X^N(t)$ in \eqref{eq-matrix SDE OU}
\begin{align} \label{eq-eigenvalue SDE matrix OU}
	d\lambda_i^N(t) = \dfrac{1}{\sqrt{N}} dB_i(t) + \left( -\dfrac{1}{2} \lambda_i^N(t) + \dfrac{1}{2N} \sum_{j:j\neq i} \dfrac{1}{\lambda_i^N(t) - \lambda_j^N(t)} \right) dt,
	~ 1 \le i \le N.
\end{align}
By assuming the non-collision of the initial state $\lambda_1^N(0) > \cdots > \lambda_N^N(0)$, the non-collision of the eigenvalue processes was also established in \cite{Chan1992} by an argument similar to the one used in the proof of Theorem \ref{Thm-Dyson SDE}.

\subsection{High-dimensional limits} \label{Sec:DBM limit N}

Let $\cP(\bR)$ be the space of probability measures on $\bR$ equipped with the weak topology and corresponding metric $d_{\cP(\bR)}$. For $T>0$, let $C([0,T],\cP(\bR))$ be the space of continuous processes with values in $\cP(\bR)$. Then the space $C([0,T],\cP(\bR))$ endowed with the metric
\begin{align*}
	d_{C([0,T],\cP(\bR))} \left( \mu^{(1)}, \mu^{(2)} \right)
	= \sup_{t\in [0,T]} d_{\cP(\bR)} \left( \mu^{(1)}(t), \mu^{(2)}(t) \right),
\end{align*}
is complete. For a test function $f(x)$ and a measure $\mu(dx)$ on $\bR$, we write
$	\langle f, \mu \rangle = \int_{\bR} f(x) \mu(dx).
$

Recall the definition of $X^{N,\beta}(t)$ in Theorem \ref{Thm-Dyson SDE}. Let $L_N^{\beta}(t)$ be the empirical measure of the eigenvalue processes $\{\lambda_i^{N,\beta}(t)\}_ {1\le i\le N}$ of $X^{N,\beta}(t)$, that is 
\begin{align} \label{eq-empirical eigenvalue}
	L_N^{\beta}(t) (dx) = \dfrac{1}{N} \sum_{i=1}^N \delta_{\lambda_i^{N,\beta}(t)} (dx).
\end{align}
In connection with the theory of random matrices, it is of interest to investigate possible limits of these empirical measures $\{L_N^{\beta}(t), t\in[0,T]\}_{N \in \bN}$ when $N$ grows to infinity.

Such high-dimensional limits are known in the literature only for some simple cases. An early result for eigenvalue empirical measure processes can be found in \cite{Chan1992}: the exponential tightness of the sequence of corresponding eigenvalue empirical measure processes was established, which implies the almost sure convergence of the sequence. The equation satisfied by the limiting measure-valued process was also obtained, which is known as McKean-Vlasov equation. Moreover, \cite{Chan1992} proved that the semi-circle law is the only equilibrium point of the equation (with finite moments of all orders).

The high-dimensional limit results were later generalized in \cite{Rogers1993} to the following system of symmetric matrix SDE
\begin{align} \label{eq-matrix SDE general OU}
	dX^N(t) = \sqrt{\dfrac{\alpha}{2N}} \left( dB(t) + dB(t)^\intercal \right) - \theta X^N(t) dt,
\end{align}
where $B(t)$ is a $N \times N$ matrix Brownian motion. Note that if we choose $\alpha = 1$ and $\theta = 0$, then the $X^N(t)$ in \eqref{eq-matrix SDE general OU} is the real symmetric matrix Brownian motion appeared in Theorem \ref{Thm-Dyson SDE}. The real symmetric matrix OU processes in \eqref{eq-matrix SDE OU} corresponds to the case $\alpha = 1/2$ and $\theta = 1/2$. The eigenvalue processes $\{\lambda_i^N(t)\}_{1 \le i \le N}$ of $X^N(t)$ in \eqref{eq-matrix SDE general OU} are called the interacting Brownian particles in \cite{Rogers1993} and satisfy the following system of SDEs
\begin{align} \label{eq-SDE particle Rogers}
	d\lambda_i^N(t) = \sqrt{\dfrac{2\alpha}{N}} dB_i(t) + \left( - \theta \lambda_i^N(t) + \dfrac{\alpha}{N} \sum_{j:j\neq i} \dfrac{1}{\lambda_i^N(t) - \lambda_j^N(t)} \right) dt, \quad 1 \le i \le N, ~~t \ge 0.
\end{align}
In \cite{Rogers1993}, the non-collision and non-explosion of the particles \eqref{eq-SDE particle Rogers} was established assuming initial state $\lambda_1^N(0) > \cdots > \lambda_N^N(0)$. Moreover, \cite[Theorem 1]{Rogers1993} proved the weak convergence in law of the sequence of eigenvalue empirical measure processes by It\^{o} calculus and a tightness argument that is similar to Theorem \ref{Thm-Wigner dynamic-weak}. The equation that characterizes the limiting measure valued process was also derived.

The family of eigenvalue processes given in \eqref{eq-SDE particle Rogers} was further generalized in \cite{Cepa1997}. More precisely, for some Lipschitz functions $b_N$, $\sigma_N$ and positive constant $\gamma_N$, \cite{Cepa1997} proved that the following particle system
\begin{align} \label{eq-SDE particle Cepa97}
	dx_i^N(t) = \sigma_N(x_i^N(t)) dB_i(t) + \left( b_N(x_i^N(t)) + \sum_{j:j\neq i} \dfrac{\gamma_N}{x_i^N(t) - x_j^N(t)} \right) dt, \quad 1 \le i \le N, ~~t \ge 0,
\end{align}
has a unique strong solution for all the time, even with collision. For the case $\gamma_N = 2\gamma/N$, \cite[Theorem 4.2]{Cepa1997} established the weak convergence in law of the sequence of eigenvalue empirical measure processes and derived the equation for all possible limits. In \cite[Theorem 5.1]{Cepa1997}, the uniqueness of this equation was obtained if $b_N(x)$ is linear and $\sigma_N(x) = \sigma_N > 0$. The non-collision property was also established in \cite[Proposition 4.1]{Cepa1997} under the assumptions that the particles are distinct at $t=0$, $b_N(x)$ is linear, and $\sigma_N(x) = \sigma_N \in [0,\sqrt{2\gamma_N}]$. However, it is worth pointing out that the high-dimensional results does not require the non-collision of the particles.

Another generalization of the real symmetric matrix Brownian motion in Theorem \ref{Thm-Dyson SDE} and real symmetric matrix OU process in \eqref{eq-matrix SDE OU} was introduced in \cite{LLX20} as the solution of the following matrix SDE
\begin{align} \label{eq-matrix SDE Li Xiang-Dong}
	dX^N(t) = \dfrac{1}{\sqrt{2N}} \left( dB(t) + dB(t)^\intercal \right) - \dfrac{1}{2} V'(X^N(t)) dt,
\end{align}
whose ordered eigenvalue processes $\{\lambda_i^N(t)\}_{1 \le i \le N}$ satisfy
\begin{align} \label{eq-SDE Li Xiang-Dong}
	d\lambda_i^N(t) = \sqrt{\dfrac{2}{N}} dB_i(t) + \left( - \dfrac{1}{2} V'\left( \lambda_i^N(t) \right) + \dfrac{1}{N} \sum_{j:j\neq i} \dfrac{1}{\lambda_i^N(t) - \lambda_j^N(t)} \right) dt, \quad 1 \le i \le N, ~t \ge 0.
\end{align}
Here, $V$ is an external potential functions in $C^1(\bR)$ satisfying certain convexity conditions. The weak convergence in law of the sequence of eigenvalue empirical measure processes and the equation for the limiting process were obtained in \cite[Theorem 1.1]{LLX20}.

For the real symmetric or complex Hermitian matrix Brownian motion $X^{N,\beta}$ defined in Theorem \ref{Thm-Dyson SDE} with null initial value $X^{N,\beta}(0) = 0$, the high-dimensional limits was investigated in \cite{Duvillard2001} by studying large deviation bounds. The exponential tightness of the sequence $\{L_N^{\beta}(t)\}_{N \in \bN}$ was established. In \cite[Corollary 1.2]{Duvillard2001}, the almost sure convergence of the sequence $\{L_N^{\beta}(t)\}_{N \in \bN}$ was obtained and the equation for the limit was derived. Moreover, the limit was proved to be the semi-circular law. The complex case was also studied in \cite[Proposition 3.1]{pt07} where the convergence in probability was obtained. We present \cite[Proposition 4.3.10]{Anderson2010} below, where the high-dimensional limit of the sequence $\{L_N^{\beta}(t)\}_{N \in \bN}$ was recovered without assuming the null initial condition.

\begin{theorem} \label{Thm-Wigner dynamic-strong}
	Let $T>0$ be a fixed number. Suppose that there exists a positive function $\varphi \in C^2(\bR)$ with bounded first and second derivatives and satisfying $\lim_{|x|\to\infty} \varphi(x) = +\infty$, such that
$\displaystyle
		C_0 := \sup_{N\in\bN} \langle \varphi, L_N^{\beta}(0) \rangle < \infty.
$
	Assume that $L_N^{\beta}(0)$ converges weakly as $N$ tends to infinity towards a probability measure $\mu_0$.

	Then the sequence $\{L_N^{\beta}(t), t\in[0,T]\}_{N \in \bN}$
    converges {almost surely} in $C([0,T], \cP(\bR))$. Its limit
    $\mu$ is characterized by the following equation: 	for any $f \in C_b^2(\bR)$,
	\begin{align} \label{eq-equation for Dyson limit measure}
		\langle f, \mu_t \rangle
		= \langle f, \mu_0 \rangle
		+ \dfrac{1}{2} \int_{0}^t \iint_{\bR^2} \dfrac{f'(x) - f'(y)}{x-y} \mu_s(dx) \mu_s(dy) ds, \quad \forall t \in [0,T].
	\end{align}

\end{theorem}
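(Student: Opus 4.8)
The plan is to follow the classical martingale/tightness/uniqueness scheme for measure-valued limits, taking the system \eqref{eq-Dyson BM SDE} as the starting point. First I would apply It\^{o}'s formula to $\langle f, L_N^\beta(t)\rangle = \frac1N\sum_{i=1}^N f(\lambda_i^{N,\beta}(t))$ for $f\in C_b^2(\bR)$. Using \eqref{eq-Dyson BM SDE} together with $d\langle \lambda_i^{N,\beta},\lambda_j^{N,\beta}\rangle_t = \frac{2}{\beta N}\delta_{ij}\,dt$, one obtains
\begin{align*}
\langle f, L_N^\beta(t)\rangle &= \langle f, L_N^\beta(0)\rangle + M_N^f(t) + \varepsilon_N^f(t) \\
&\quad + \frac{1}{2}\int_0^t \iint_{\bR^2}\frac{f'(x)-f'(y)}{x-y}\,L_N^\beta(s)(dx)\,L_N^\beta(s)(dy)\,ds,
\end{align*}
where the key algebraic step is the symmetrization $\frac{1}{N^2}\sum_{i\neq j}\frac{f'(\lambda_i)}{\lambda_i-\lambda_j} = \frac{1}{2N^2}\sum_{i\neq j}\frac{f'(\lambda_i)-f'(\lambda_j)}{\lambda_i-\lambda_j}$ followed by filling in the diagonal $i=j$ (with value $f''(\lambda_i)$); here the martingale $M_N^f$ has quadratic variation bounded by $2\|f'\|_\infty^2 T/(\beta N^2)$, and the remainder $\varepsilon_N^f$ — coming from the It\^{o} correction and the added-back diagonal — satisfies $\sup_{t\le T}|\varepsilon_N^f(t)| \le C\|f''\|_\infty T/N$. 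Since $(x,y)\mapsto \frac{f'(x)-f'(y)}{x-y}$ extends to a bounded continuous function on $\bR^2$, every term above is well defined.

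Second, I would establish tightness of $\{L_N^\beta\}_{N}$ in $C([0,T],\cP(\bR))$. Applying the same expansion to the Lyapunov function $\varphi$ and taking expectations (legitimate because $\varphi$ has bounded first and second derivatives) yields $\sup_{N}\sup_{t\le T}\bE\langle\varphi, L_N^\beta(t)\rangle \le C_0 + C(T)<\infty$; by Markov's inequality this controls the escape of mass to infinity uniformly in $N$ and $t$, giving tightness of the one-time marginals. Combining this with the equicontinuity-in-$t$ estimate that the expansion provides (the drift and $\varepsilon_N^f$ are Lipschitz in $t$ with deterministic constants, and the martingale part is handled through its quadratic variation and Doob's maximal inequality), and testing against a countable convergence-determining family in $C_b^2(\bR)$, one obtains tightness of the processes in $C([0,T],\cP(\bR))$ via an Arzel\`a--Ascoli type criterion.

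Third, I would identify the limit points: along any weakly convergent subsequence $L_{N_k}^\beta\to\mu$, the martingale term tends to $0$ in $L^2$ (and in fact almost surely by Borel--Cantelli, since $\sum_N N^{-2}<\infty$ and $\bE[\sup_{t\le T}M_N^f(t)^2]=O(N^{-2})$), the remainder $\varepsilon_{N_k}^f$ tends to $0$, and the quadratic term passes to the limit because $\nu\mapsto \iint\frac{f'(x)-f'(y)}{x-y}\,\nu(dx)\nu(dy)$ is continuous in the weak topology; hence $\mu$ solves \eqref{eq-equation for Dyson limit measure}. The remaining step, which I expect to be the main obstacle, is uniqueness of the solution of \eqref{eq-equation for Dyson limit measure} with datum $\mu_0$. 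For this I would test with $f(x)=(z-x)^{-1}$, $z\in\bC\setminus\bR$ (approximated by $C_b^2$ functions, justified by the uniform bound coming from $\varphi$), to find that the Stieltjes transform $G(t,z)=\int_{\bR}(z-x)^{-1}\mu_t(dx)$ solves the complex inviscid Burgers equation $\partial_t G(t,z) = -G(t,z)\,\partial_z G(t,z)$; this is solved uniquely by the method of characteristics — the curves $z(t)=z_0+tG_0(z_0)$ carry constant values $G(t,z(t))=G_0(z_0)$ — and uniqueness of $G(t,\cdot)$ as an analytic function on the upper half-plane forces uniqueness of $\mu_t$. Once uniqueness holds, tightness plus the subsequence argument upgrades to convergence of the full sequence; and because $M_N^f$ and $\varepsilon_N^f$ are almost surely small for every $f$ in a fixed countable determining family (Borel--Cantelli again), the convergence in $C([0,T],\cP(\bR))$ is almost sure, not merely in probability. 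The two genuinely delicate points are therefore the Burgers/characteristics uniqueness argument (including the rigorous passage to resolvent test functions) and the sharp control of the martingale terms needed to reach the almost-sure conclusion.
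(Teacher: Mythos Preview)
Your It\^{o} expansion, symmetrization, identification of limit points, and Burgers/characteristics uniqueness argument are all fine and match the paper (in fact your uniqueness sketch is more general than the paper's, which only treats the self-similar case and defers the general initial datum to the literature). The genuine gap is in the passage from tightness to \emph{almost-sure} convergence. In your Step~2 you establish only tightness in law: an expectation bound $\sup_{N,t}\bE\langle\varphi,L_N^\beta(t)\rangle<\infty$ plus Markov's inequality, together with Doob-type control of the martingale increments, yields tightness of the laws of $L_N^\beta$ in $C([0,T],\cP(\bR))$, and with uniqueness of the limit this gives convergence in probability --- which is precisely the content of the paper's weaker Theorem~\ref{Thm-Wigner dynamic-weak}, not of Theorem~\ref{Thm-Wigner dynamic-strong}. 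Your final sentence (``because $M_N^f$ and $\varepsilon_N^f$ are almost surely small\ldots the convergence is almost sure'') does not close this: knowing that the error terms vanish a.s.\ does not by itself produce an a.s.\ convergent subsequence of the measure-valued processes, because you have not shown \emph{pathwise} relative compactness of $\{L_N^\beta\}_N$.

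The paper addresses exactly this point. Instead of expectation bounds, it uses fourth-moment Burkholder--Davis--Gundy estimates on the martingale part to obtain, for a countable dense family $\{\tilde f_k\}\subset C_b^2$ and for $\varphi$, probability bounds of the form $\bP(L_N^\beta\notin \mathcal{H})$ that are \emph{summable in $N$}, where $\mathcal{H}\subset C([0,T],\cP(\bR))$ is a fixed compact set built from an Arzel\`a--Ascoli criterion together with the sublevel set $\{\mu:\langle\varphi,\mu\rangle\le C\}$. Borel--Cantelli then gives $L_N^\beta\in\mathcal{H}$ eventually, a.s., i.e.\ almost-sure relative compactness; combined with Step~3 and uniqueness, this yields almost-sure convergence. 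Your argument can be repaired along the same lines: rather than taking expectations in the $\varphi$-expansion, use your own observation that $\sup_t|M_N^\varphi(t)|\to0$ a.s.\ (Borel--Cantelli on the $O(N^{-2})$ quadratic variation) to get a \emph{pathwise} bound $\sup_{N,t}\langle\varphi,L_N^\beta(t)\rangle<\infty$, and similarly use $\sup_t|M_N^{f_k}(t)|\to0$ a.s.\ to obtain pathwise equicontinuity of $t\mapsto\langle f_k,L_N^\beta(t)\rangle$ uniformly in $N$. But as written, Step~2 stops at tightness in law, and the upgrade paragraph is not a proof.
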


\begin{proof}[\textbf{\upshape Proof:}]
	The idea of the proof comes from \cite[Proposition 4.3.10]{Anderson2010} (see also \cite{Song2019}). We divide the proof into four steps.
	
	\vspace*{1\baselineskip}
	
	\noindent {Step 1:} (Computation of $\langle f, L_N^{\beta}(t) \rangle$ by It\^{o} calculus.)~
	By the definition \eqref{eq-empirical eigenvalue} of $L_N^{\beta}(t)$, for $f \in C^2(\bR)$,
	\begin{align*}
		\langle f, L_N^{\beta}(t) \rangle
		= \int f(x) L_N^{\beta}(t)(dx)
		= \dfrac{1}{N} \sum_{i=1}^N \int f(x) \delta_{\lambda_i^{N,\beta}(t)}(dx)
		= \dfrac{1}{N} \sum_{i=1}^N f(\lambda_i^{N,\beta}(t)).
	\end{align*}
	By It\^{o}'s formula and \eqref{eq-Dyson BM SDE},
	\begin{align*}
		f(\lambda_i^{N,\beta}(t))
		&= f(\lambda_i^{N,\beta}(0)) + \int_{0}^t f'(\lambda_i^{N,\beta}(s)) d\lambda_i^{N,\beta}(s) + \dfrac{1}{2} \int_{0}^t f''(\lambda_i^{N,\beta}(s)) d \langle \lambda_i^{N,\beta} \rangle_s \\
		&= f(\lambda_i^{N,\beta}(0)) + \dfrac{\sqrt{2}}{\sqrt{\beta N}} \int_{0}^t f'(\lambda_i^{N,\beta}(s)) dW_i(s)
		+ \dfrac{1}{\beta N} \int_{0}^t f''(\lambda_i^{N,\beta}(s)) ds
		\\
        & \qquad + \dfrac{1}{N} \int_{0}^t f'(\lambda_i^{N,\beta}(s)) \sum_{j:j\neq i} \dfrac{1}{\lambda_i^{N,\beta}(s) - \lambda_j^{N,\beta}(s)} ds.
	\end{align*}
	Thus, using the convention $\frac{f'(x) - f'(y)}{x-y} = f''(x)$ on $\{x=y\}$, we have
	\begin{align} \label{eq-Ito formula}
		\langle f, L_N^{\beta}(t) \rangle
		=& \dfrac{1}{N} \sum_{i=1}^N f(\lambda_i^{N,\beta}(0))
		+ \dfrac{\sqrt{2}}{\sqrt{\beta N^3}} \sum_{i=1}^N \int_{0}^t f'(\lambda_i^{N,\beta}(s)) dW_i(s)
		+ \dfrac{1}{\beta N^2} \sum_{i=1}^N \int_{0}^t f''(\lambda_i^{N,\beta}(s)) ds \nonumber \\
		&+ \dfrac{1}{N^2} \sum_{i\neq j} \int_{0}^t \dfrac{f'(\lambda_i^{N,\beta}(s))}{\lambda_i^{N,\beta}(s) - \lambda_j^{N,\beta}(s)} ds \nonumber \\
		=& \dfrac{1}{N} \sum_{i=1}^N f(\lambda_i^{N,\beta}(0))
		+ \dfrac{\sqrt{2}}{\sqrt{\beta N^3}} \sum_{i=1}^N \int_{0}^t f'(\lambda_i^{N,\beta}(s)) dW_i(s)
		+ \dfrac{1}{\beta N^2} \sum_{i=1}^N \int_{0}^t f''(\lambda_i^{N,\beta}(s)) ds \nonumber \\
		&+ \dfrac{1}{2N^2} \sum_{i\neq j} \int_{0}^t \dfrac{f'(\lambda_i^{N,\beta}(s)) - f'(\lambda_j^{N,\beta}(s))}{\lambda_i^{N,\beta}(s) - \lambda_j^{N,\beta}(s)} ds \nonumber \\
		=& \dfrac{1}{N} \sum_{i=1}^N f(\lambda_i^{N,\beta}(0))
		+ \dfrac{\sqrt{2}}{\sqrt{\beta N^3}} \sum_{i=1}^N \int_{0}^t f'(\lambda_i^{N,\beta}(s)) dW_i(s) \nonumber \\
		&+ \left( \dfrac{1}{\beta} - \dfrac{1}{2} \right) \dfrac{1}{N^2} \sum_{i=1}^N \int_{0}^t f''(\lambda_i^{N,\beta}(s)) ds
		+ \dfrac{1}{2N^2} \sum_{i,j=1}^N \int_{0}^t \dfrac{f'(\lambda_i^{N,\beta}(s)) - f'(\lambda_j^{N,\beta}(s))}{\lambda_i^{N,\beta}(s) - \lambda_j^{N,\beta}(s)} ds \nonumber \\
		=& \langle f, L_N^{\beta}(0) \rangle
		+ \dfrac{\sqrt{2}}{\sqrt{\beta N^3}} \sum_{i=1}^N \int_{0}^t f'(\lambda_i^{N,\beta}(s)) dW_i(s)
		+ \left( \dfrac{1}{\beta} - \dfrac{1}{2} \right) \dfrac{1}{N} \int_0^t \langle f'', L_N^{\beta}(s) \rangle ds \nonumber \\
		& + \dfrac{1}{2} \int_{0}^t \iint_{\bR^2} \dfrac{f'(x) - f'(y)}{x-y} L_N^{\beta}(s)(dx) L_N^{\beta}(s)(dy) ds.
	\end{align}
	
	\vspace*{1\baselineskip}
	
	\noindent {Step 2:} We prove that the sequence $\{L_N^{\beta}(t), t\in[0,T]\}_{N \in \bN}$ is almost surely relatively compact in $C([0,T], \cP(\bR))$, that is, every subsequence has a further subsequence that converges in $C([0,T], \cP(\bR))$ almost surely, following the argument \cite[Lemma 4.3.13]{Anderson2010}.
	
	Note that for $f \in C^2(\bR)$ with bounded first and second derivatives, by mean value theorem, one can show $|\frac{f'(x) - f'(y)}{x-y}| \le \|f''\|_{L^{\infty}}$. Hence, by \eqref{eq-Ito formula},
	\begin{align} \label{eq-1.15-Holder estimate}
		|\langle f, L_N^{\beta}(t) \rangle - \langle f, L_N^{\beta}(s) \rangle| 
		\le~& \left| \dfrac{\sqrt{2}}{\sqrt{\beta N^3}} \sum_{i=1}^N \int_s^t f'(\lambda_i^{N,\beta}(r)) dW_i(r) \right|
		+ \left( \dfrac{1}{\beta} - \dfrac{1}{2} \right) \dfrac{1}{N} \left| \int_s^t \langle f'', L_N^{\beta}(r) \rangle dr \right| \nonumber \\
		&~ + \dfrac{1}{2} \left| \int_s^t \iint_{\bR^2} \dfrac{f'(x) - f'(y)}{x-y} L_N^{\beta}(r)(dx) L_N^{\beta}(r)(dy) dr \right| \nonumber \\
		\le~& \left| \dfrac{\sqrt{2}}{\sqrt{\beta N^3}} \sum_{i=1}^N \int_s^t f'(\lambda_i^{N,\beta}(r)) dW_i(r) \right|
		+ \left( \dfrac{1}{2} + \dfrac{1}{\beta N} - \dfrac{1}{2N} \right) \|f''\|_{L^{\infty}} |t-s|.
	\end{align}
	Note that $[0,T]$ can be partitioned into small intervals of length $\eta < \|f''\|_{L^{\infty}}^{-8/7}$ and the number of the intervals is $J = [T\eta^{-1}]$. Then by  Markov inequality and Burkholder-Davis-Gundy inequality, we have, for $M>0$,
	\begin{align} \label{eq-1.16-martingale term}
		& \mathbb{P} \left( \sup_{|t-s|\le \eta} \left| \dfrac{\sqrt{2}}{\sqrt{\beta N^3}} \sum_{i=1}^N \int_s^t f'(\lambda_i^{N,\beta}(r)) dW_i(r) \right| \ge M\eta^{1/8} \right) 
		\le \sum_{k=0}^{J-1} \mathbb{P} \left( \sup_{k\eta \le t \le (k+1)\eta} \left| \dfrac{\sqrt{2}}{\sqrt{\beta N^3}} \sum_{i=1}^N \int_{k\eta}^t f'(\lambda_i^{N,\beta}(r)) dW_i(r) \right| \ge \dfrac{M\eta^{1/8}}{3} \right) \nonumber \\
		\le& \sum_{k=0}^{J-1} \dfrac{81}{M^4 \eta^{1/2}} \mathbb{E} \left[ \sup_{k\eta \le t \le (k+1)\eta} \left| \dfrac{\sqrt{2}}{\sqrt{\beta N^3}} \sum_{i=1}^N \int_{k\eta}^t f'(\lambda_i^{N,\beta}(r)) dW_i(r) \right|^4 \right] 
		\le \sum_{k=0}^{J-1} \dfrac{324 \Lambda_2}{M^4 \eta^{1/2} \beta^2 N^6} \mathbb{E} \left[ \left\langle \sum_{i=1}^N \int_{k\eta}^{k\eta + \cdot} f'(\lambda_i^{N,\beta}(r)) dW_i(r) \right\rangle_{\eta}^2 \right] \nonumber \\
		\le& \sum_{k=0}^{J-1} \dfrac{324 \Lambda_2}{M^4 \eta^{1/2} \beta^2 N^6} \mathbb{E} \left[ \left( \sum_{i=1}^N \int_{k\eta}^{(k+1)\eta} \left| f'(\lambda_i^{N,\beta}(r)) \right|^2 dr \right)^2 \right] 
		\le \dfrac{324 \Lambda_2 J \eta^{3/2}}{M^4 \beta^2 N^4} \|f'\|_{L^{\infty}}^4
		\le \dfrac{324 \Lambda_2 T \eta^{1/2}}{M^4 \beta^2 N^4} \|f'\|_{L^{\infty}}^4.
	\end{align}
	Hence, noting that $\beta \in \{1,2\}$ and $\eta^{1/8} > \eta \|f'\|_{L^{\infty}}$, by \eqref{eq-1.15-Holder estimate} and \eqref{eq-1.16-martingale term}, for $M>0$, we have
	\begin{align} \label{eq-1.17-equicontinuous estmation}
		& \mathbb{P} \left( \sup_{|t-s|\le \eta} |\langle f, L_N^{\beta}(t) \rangle - \langle f, L_N^{\beta}(s) \rangle| \ge (M+1) \eta^{1/8} \right) \nonumber\\
	  \le~&  \mathbb{P} \left( \sup_{|t-s|\le \eta} \left| \dfrac{\sqrt{2}}{\sqrt{\beta N^3}} \sum_{i=1}^N \int_s^t f'(\lambda_i^{N,\beta}(r)) dW_i(r) \right| \ge (M+1) \eta^{1/8} - \dfrac{\eta \|f'\|_{L^{\infty}}}{2} \right) \nonumber \\
		\le~& \mathbb{P} \left( \sup_{|t-s|\le \eta} \left| \dfrac{\sqrt{2}}{\sqrt{\beta N^3}} \sum_{i=1}^N \int_s^t f'(\lambda_i^{N,\beta}(r)) dW_i(r) \right| \ge M \eta^{1/8} \right) 
		\le \dfrac{324 \Lambda_2 T \eta^{1/2}}{M^4 N^4} \|f'\|_{L^{\infty}}^4.
	\end{align}
	
	Let $\{\tilde{f}_k\}_{k\in \bN}$ be a family of $C_b^2(\bR)$ functions that is dense in $C_0(\bR)$. Choose $\varepsilon_k = \left( 1 + k \| \tilde{f}_k' \|_{L^{\infty}} \right)^{-1}$ and define
	\begin{align*}
		C_T(\tilde{f}_k, \varepsilon_k)
		&= \bigcap_{n=1}^{\infty} \left\{ \mu \in C([0,T], \cP(\mathbb{R})): \sup_{|t-s|\le n^{-4}} \left| \langle \tilde{f}_k, \mu_t \rangle - \langle \tilde{f}_k, \mu_s \rangle \right| \le \dfrac{1}{\varepsilon_k \sqrt{n}} \right\} \\
		&= \left\{ \mu \in C([0,T], \cP(\mathbb{R})): \sup_{|t-s|\le n^{-4}} \left| \langle \tilde{f}_k, \mu_t \rangle - \langle \tilde{f}_k, \mu_s \rangle \right| \le \dfrac{1}{\varepsilon_k \sqrt{n}}, \forall n \in \mathbb{N} \right\} \\
		&= \left\{ \mu \in C([0,T], \cP(\mathbb{R})): t \rightarrow \langle \tilde{f}_k, \mu_t \rangle \in C_{\left\| \tilde{f}_k \right\|_{L^{\infty}}}(\{(\varepsilon_k \sqrt{n})^{-1}\}, \{n^{-4}\}) \right\},
	\end{align*}
	where the set
    \[
	C_{M}(\{(\varepsilon_k \sqrt{n})^{-1}\}, \{n^{-4}\})
	= \bigcap_{n=1}^{\infty} \left\{ g \in C([0,T], \mathbb{R}): \sup_{|t-s|\le n^{-4}} |g(t) - g(s)| \le (\varepsilon_k \sqrt{n})^{-1}, \sup_{t\in[0,T]} |g(t)| \le M \right\},
    \]
	is (sequentially) compact in $C([0,T],\mathbb R)$ according to Arzela-Ascoli Lemma. By \eqref{eq-1.17-equicontinuous estmation},	
	\begin{align} \label{eq-1.18-compact 1}
		&\sum_{N=1}^{\infty} \sum_{k \ge 1} \mathbb{P} (L_N^{\beta} \notin C_T(\tilde{f}_k, \varepsilon_k)) 
		\le \sum_{N=1}^{\infty} \sum_{k \ge 1} \sum_{n=1}^{\infty} \mathbb{P} \left( \sup_{|t-s|\le n^{-4}} \left| \langle \tilde{f}_k, L_N^{\beta}(t) \rangle - \langle \tilde{f}_k, L_N^{\beta}(s) \rangle \right| > \dfrac{1}{\varepsilon_k \sqrt{n}} \right) \nonumber \\
		\le~& \sum_{N=1}^{\infty} \sum_{k \ge 1} \sum_{n=1}^{\infty} \dfrac{324 \Lambda_2 T}{(\varepsilon_k^{-1}-1)^4 n^2 N^4} \|\tilde{f}_k'\|_{L^{\infty}}^4 
		= 324 \Lambda_2 T \sum_{n=1}^{\infty} n^{-2} \sum_{k \ge 1} \dfrac{\| \tilde{f}_k' \|_{L^{\infty}}^4}{(\varepsilon_k^{-1}-1)^4} \sum_{N=1}^{\infty} \dfrac{1}{N^4} \nonumber \\
		=~ & 324 \Lambda_2 T \sum_{n=1}^{\infty} n^{-2} \sum_{k \ge 1} k^{-4} \sum_{N=1}^{\infty} \dfrac{1}{N^4}
		<\infty,
	\end{align}
	Since the function $\varphi$ is positive and tends to infinity as $|x| \rightarrow +\infty$, the set
	\begin{align*}
		K(\varphi) =	\left\{ \mu \in\cP(\mathbb{R}): \langle \varphi, \mu \rangle \le 1 + C_0 + T \left\| \varphi'' \right\|_{L^{\infty}} \right\}
	\end{align*}
	is tight, i.e., it is (sequentially) compact in $\cP(\mathbb{R})$. By \eqref{eq-1.15-Holder estimate} for $f=\varphi$ and $s=0$, Markov inequality and Burkholder-Davis-Gundy inequality, we have
	\begin{align} \label{eq-1.19-compact 2}
		& \sum_{N=1}^{\infty} \mathbb{P} \left( \exists t \in [0,T], \ \mathrm{s.t.} \ L_N^{\beta}(t) \notin K(\varphi) \right) 
		= \sum_{N=1}^{\infty} \mathbb{P} \left( \sup_{t\in[0,T]} \langle \varphi, L_N(t) \rangle > 1 + C_0 + T \left\| \varphi'' \right\|_{L^{\infty}} \right) \notag \\
		\le~& \sum_{N=1}^{\infty} \mathbb{P} \left( \sup_{t\in[0,T]} \left| \dfrac{\sqrt{2}}{\sqrt{\beta N^3}} \sum_{i=1}^N \int_0^t \varphi'(\lambda_i^{N,\beta}(r)) dW_i(r) \right| > 1 \right) 
		\le \sum_{N=1}^{\infty} \bE \left[ \sup_{t\in[0,T]} \left| \dfrac{\sqrt{2}}{\sqrt{\beta N^3}} \sum_{i=1}^N \int_0^t \varphi'(\lambda_i^{N,\beta}(r)) dW_i(r) \right|^2 \right] \nonumber \\
		\le~& \sum_{N=1}^{\infty} \Lambda_1 \bE \left[ \left\langle \dfrac{\sqrt{2}}{\sqrt{\beta N^3}} \sum_{i=1}^N \int_0^{\cdot} \varphi'(\lambda_i^{N,\beta}(r)) dW_i(r) \right\rangle_T \right] 
		= \sum_{N=1}^{\infty} \dfrac{2\Lambda_1}{\beta N^3} \bE \left[ \sum_{i=1}^N \int_0^T \left( \varphi'(\lambda_i^{N,\beta}(r)) \right)^2 dr \right] \nonumber \\
		\le~& \sum_{N=1}^{\infty} \dfrac{2 \Lambda_1 T \left\| \varphi' \right\|_{L^{\infty}}^2}{\beta N^2}
		< \infty.
	\end{align}
	
	By \cite[Lemma 4.3.13]{Anderson2010} (see also \cite[Lemma B.4]{Song2020}), the set
	\begin{align*}
		\mathcal{H} = \Bigl\{\mu\in C([0,T], \cP(\mathbb{R})): \mu_t \in K(\varphi), \ \forall t \in [0,T]\Bigr\} \cap \bigcap_{k\ge 1} C_T(\tilde{f}_k, \varepsilon_k)
	\end{align*}
	is compact in $C([0,T], \cP(\mathbb{R}))$. By \eqref{eq-1.18-compact 1} and \eqref{eq-1.19-compact 2}, we have
\[
		\sum_{N=1}^{\infty} \mathbb{P} (L_N^{\beta} \notin \mathcal{H}) 
		\le \sum_{N=1}^{\infty} \mathbb{P} (\exists t \in [0,T], \ \mathrm{s.t.} \ L_N^{\beta}(t) \notin K(\varphi))
		+ \sum_{N=1}^{\infty} \sum_{k \ge 1} \mathbb{P} (L_N^{\beta} \notin C_T(\tilde{f}_k, \varepsilon_k)) 
		< \infty.
\]
	Therefore, the Borel-Cantelli Lemma implies that 
    $\displaystyle
	  \mathbb{P} \left( \liminf_{N \rightarrow \infty} \{L_N^{\beta} \in \mathcal{H} \} \right) = 1$.

	Finally, the relative compactness of the family $\{L_N^{\beta}\}_{N \in \bN}$ follows from the compactness of $\mathcal{H}$.
	
	\vspace*{1\baselineskip}
	
	\noindent {Step 3:} We derive the equation \eqref{eq-equation for Dyson limit measure} for any limit point $\mu$ of the sequence $\{L_N^{\beta}(t), t\in[0,T]\}_{N \in \bN}$.
	
	Let $\{N_k\}_{k \in \bN}$ be a subsequence such that $L_{N_k}^{\beta}$ converges to $\mu$ in $C([0,T], \cP(\mathbb{R}))$. For any $\epsilon>0$, for any $f \in C_b^2(\bR)$, by Markov inequality and Burkholder-Davis-Gundy inequality, we have
	\begin{align*}
		& \sum_{k=1}^{\infty} \mathbb{P} \left( \sup_{t\in[0,T]} \left| \dfrac{\sqrt{2}}{\sqrt{\beta N_k^3}} \sum_{i=1}^{N_k} \int_0^t f'(\lambda_i^{N_k,\beta}(r)) dW_i(r) \right| \ge \epsilon \right) 
		\le \sum_{k=1}^{\infty} \epsilon^{-2} \bE \left[ \sup_{t\in[0,T]} \left| \dfrac{\sqrt{2}}{\sqrt{\beta N_k^3}} \sum_{i=1}^{N_k} \int_0^t f'(\lambda_i^{N_k,\beta}(r)) dW_i(r) \right|^2 \right] \nonumber \\
		\le~& \sum_{k=1}^{\infty} \dfrac{\Lambda_1}{\epsilon^2} \bE \left[ \left\langle \dfrac{\sqrt{2}}{\sqrt{\beta N_k^3}} \sum_{i=1}^{N_k} \int_0^{\cdot} f'(\lambda_i^{N_k,\beta}(r)) dW_i(r) \right\rangle_T \right] 
		= \sum_{k=1}^{\infty} \dfrac{2\Lambda_1}{\epsilon^2\beta N_k^3} \bE \left[ \sum_{i=1}^{N_k} \int_0^T \left( f'(\lambda_i^{N_k,\beta}(r)) \right)^2 dr \right] 
		\le \sum_{k=1}^{\infty} \dfrac{2\Lambda_1 T}{\epsilon^2\beta N_k^2} \left\|f'\right\|_{L^{\infty}}^2,
	\end{align*}
	which is finite since $N_k \ge k$. By Borel–Cantelli Lemma,
	\begin{align} \label{eq-1.20-martingale->0}
		\dfrac{\sqrt{2}}{\sqrt{\beta N_k^3}} \sum_{i=1}^{N_k} \int_0^t f'(\lambda_i^{N_k,\beta}(r)) dW_i(r)
		\to 0, \quad k \to \infty,
	\end{align}
	uniformly with respect to $t$ almost surely. Moreover, the boundedness of $\|f''\|_{L^{\infty}}$ yields
	\begin{align} \label{eq-1.21-drift->0}
		\left( \dfrac{1}{\beta} - \dfrac{1}{2} \right) \dfrac{1}{N_k} \int_0^t \langle f'', L_{N_k}^{\beta}(s) \rangle ds,
		\to 0, \quad k \to \infty,
	\end{align}
	uniformly with respect to $t$ almost surely. Therefore, by considering the subsequence $\{N_k\}$ in \eqref{eq-Ito formula}, and using \eqref{eq-1.20-martingale->0} and \eqref{eq-1.21-drift->0}, we have
	\begin{align*}
		\langle f, \mu_t \rangle
		= \langle f, \mu_0 \rangle
		+ \dfrac{1}{2} \int_{0}^t \iint_{\bR^2} \dfrac{f'(x) - f'(y)}{x-y} \mu_s(dx) \mu_s(dy) ds.
	\end{align*}
	
	\vspace*{1\baselineskip}
	
	\noindent {Step 4:} We establish the uniqueness of the solution to the equation \eqref{eq-equation for Dyson limit measure}. For simplicity, we only prove the uniqueness for the self-similar solution under null initial condition $X^{N,\beta}(0) = 0$. The idea can be found in \cite{Song2019} and \cite[Exercise 4.3.18]{Anderson2010}. For general case, we refer the interested readers to \cite[Lemma 4.3.15]{Anderson2010}. Note that under the null initial condition, the limit points of the sequence $\{L_N^{\beta}(t), t\in[0,T]\}_{N \in \bN}$ inherit the self-similarity property from the Brownian motions on the matrix entries, and hence the uniqueness of the limit of $\{L_N^{\beta}(t), t\in[0,T]\}_{N \in \bN}$.
	
	To prove the uniqueness, it is convenient to choose $f(x) = (z-x)^{-1}$ for $z \in \bC \setminus \bR$. Denote
	\begin{align*}
		G_t(z) = \int \dfrac{1}{z-x} \mu_t(dx),
	\end{align*}
	which is known as the Stieltjes transform of the measure $\mu_t$. We refer to \cite[Section 2.4.3]{Tao2012} and \cite[Section 1.3.2]{Bai2010} for more details about Stieltjes transform.
	
	Recall the matrix Brownian motion $H^{N,\beta}(t)$ in Definition \ref{Def-Dyson}, by the self-similarity of Brownian motion, under null initial condition, we have $X^{N,\beta}(t) \overset{d}{=} \sqrt{t} X^{N,\beta}(1)$, where $\overset{d}{=}$ is the equality in distribution. Thus, we have the scaling property of the Stieltjes transform
	\begin{align} \label{eq-1.23-scaling of G}
		G_t(z) = \dfrac{1}{\sqrt{t}} G_1 \left( \dfrac{z}{\sqrt{t}} \right).
	\end{align}
	Hence,
	\begin{align*}
		G_t(z) \partial_z G_t(z)
		= \dfrac{1}{\sqrt{t}^3} G_1 \left( \dfrac{z}{\sqrt{t}} \right) G_1' \left( \dfrac{z}{\sqrt{t}} \right)
		= - \dfrac{1}{z} \dfrac{d}{dt} \left( G_1^2 \left( \dfrac{z}{\sqrt{t}} \right) \right).
	\end{align*}
	Letting $f(X) = (z-x)^{-1}$ for $z \in \bC \setminus \bR$, \eqref{eq-equation for Dyson limit measure} can be written as
	\begin{align}
		G_t(z) =& G_0(z) - \int_{0}^t G_s(z) \partial_z G_s(z) ds \label{eq-1.23-integral equation for G}
		= G_0(z) + \dfrac{1}{z} G_1^2 \left( \dfrac{z}{\sqrt{s}} \right) \Bigg|_{s=0}^{s=t} 
		= G_0(z) + \dfrac{1}{z} G_1^2 \left( \dfrac{z}{\sqrt{t}} \right),
	\end{align}
	where we use
	\begin{align*}
		G_1 \left( \dfrac{z}{\sqrt{s}} \right) \Bigg|_{s=0}
		= \int \dfrac{1}{z/\sqrt{s}-x} \mu_t(dx) \Bigg|_{s=0}
		= \int \dfrac{\sqrt{s}}{z-\sqrt{s}x} \mu_t(dx) \Bigg|_{s=0}
		= 0.
	\end{align*}
	Letting $t=1$ in \eqref{eq-1.23-integral equation for G} and noting that $G_0(z) = 1/z$, we have
$		G_1^2(z) - z G_1(z) + 1 = 0,$
	of which the solution is
	\begin{align} \label{eq-1.25-solution of G_1}
		G_1(z) = \dfrac{z - \sqrt{z^2 - 4}}{2}.
	\end{align}
	Note that by definition,
	\begin{align*}
		\Im \left( G_t(z) \right)
		= \Im \left( \int \dfrac{1}{z-x} \mu_t(dx) \right)
		= \Im \left( \int \dfrac{\bar{z}-x}{|z-x|^2} \mu_t(dx) \right)
		= - \Im (z) \int \dfrac{1}{|z-x|^2} \mu_t(dx).
	\end{align*}
	Here, we use the notation $\Im(w)$ for the imaginary part of $w$. Thus, for all $t$, $G_t(z)$ maps $z \in \bC_+$ to $\bC_-$. Thus, the square root in \eqref{eq-1.25-solution of G_1} should be the branch that maps from $\mathbb{C}_{+}$ to $\mathbb{C}_{+}$.
	
	Lastly, \eqref{eq-1.23-scaling of G} and \eqref{eq-1.25-solution of G_1} yield
	\begin{align} \label{eq-2.38}
		G_t(z) = \dfrac{z - \sqrt{z^2-4t}}{2t},
	\end{align}
	which is the unique self-similar solution to \eqref{eq-equation for Dyson limit measure}.
\end{proof}

\begin{remark}
	The equation \eqref{eq-2.38} is consistent with the Stieltjes transform of the semi-circle law \cite[(2.4.6)]{Anderson2010}, \cite[(2.103)]{Tao2012} and \cite[Lemma 2.11]{Bai2010}.
\end{remark}

\begin{remark}
	Note that under the null initial condition $X^{N,\beta}(0)=0$, at the time $t=1$, the matrix $X^{N,\beta}(1) = H^{N,\beta}(1)$ is the Gaussian Orthogonal Ensemble (GOE) when $\beta=1$, and the Gaussian Unitary Ensemble (GUE) when $\beta=2$. We refer to \cite[Section 2.3]{Tao2012} for more details about GOE and GUE. Moreover, \eqref{eq-1.25-solution of G_1} is the Stieltjes transform of the famous semi-circle law (see \cite[(2.4.6)]{Anderson2010}, \cite[(2.103)]{Tao2012} and \cite[Lemma 2.11]{Bai2010}). Thus, Theorem \ref{Thm-Wigner dynamic-strong} gives a dynamical proof of the semi-circle law (see \cite[Theorem 2.4.2]{Tao2012} or \cite[Theorem 2.5]{Bai2010} for the statement of semi-circle law).
\end{remark}

\begin{remark}
	The differential form of \eqref{eq-1.23-integral equation for G},
	\begin{align*}
		\partial_t G_t(z) + G_s(z) \partial_z G_s(z) = 0,
	\end{align*}
	is the complex version of inviscid Burgers' equation.
\end{remark}

In some literature (see for example \cite{Rogers1993, Jaramillo2019, Pardo2017, Pardo2016, Song2020}), some other tightness argument was used to obtain the convergence in law of the eigenvalue empirical measure processes in $C([0,T], \cP(\bR))$. To illustrate this argument, we present the following weak version of Theorem \ref{Thm-Wigner dynamic-strong}.

\begin{theorem}[\cite{pt07}, Proposition 3.1] \label{Thm-Wigner dynamic-weak}
	Assume that all the conditions in Theorem \ref{Thm-Wigner dynamic-strong} hold. Then the sequence $\{L_N^{\beta}(t), t\in[0,T]\}_{N \in \bN}$ converges {in probability} in $C([0,T], \cP(\bR))$. Moreover, its limit $\mu$ is characterized by the equation \eqref{eq-equation for Dyson limit measure}.
\end{theorem}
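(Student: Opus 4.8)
The plan is to reuse the It\^o-formula identity \eqref{eq-Ito formula} obtained in Step~1 of the proof of Theorem~\ref{Thm-Wigner dynamic-strong}, and to replace the almost-sure relative compactness argument of that proof by a tightness argument phrased through the real-valued projections $t\mapsto\langle f,L_N^{\beta}(t)\rangle$, which is the route taken in \cite{Rogers1993} and \cite{Song2020}. Concretely, I would proceed in four steps: (i) show that for each fixed $f\in C_b^2(\bR)$ the sequence $\{\langle f,L_N^{\beta}(\cdot)\rangle\}_{N}$ is tight in $C([0,T],\bR)$; (ii) verify a compact-containment condition for $\{L_N^{\beta}\}_N$ in $\cP(\bR)$ using the function $\varphi$; (iii) deduce tightness of $\{L_N^{\beta}\}_N$ in $C([0,T],\cP(\bR))$ from (i)--(ii); and (iv) identify every subsequential limit as the unique solution of \eqref{eq-equation for Dyson limit measure}, which forces convergence in probability.

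For step (i), I would read off from \eqref{eq-Ito formula} that $\langle f,L_N^{\beta}(t)\rangle$ decomposes as the initial value $\langle f,L_N^{\beta}(0)\rangle$, a martingale term $M_N^f(t)=\frac{\sqrt2}{\sqrt{\beta N^3}}\sum_{i=1}^N\int_0^t f'(\lambda_i^{N,\beta}(r))\,dW_i(r)$, a drift term of order $N^{-1}$ with bounded integrand, and the double integral $\frac{1}{2}\int_0^t\iint \frac{f'(x)-f'(y)}{x-y}L_N^{\beta}(s)(dx)L_N^{\beta}(s)(dy)\,ds$. Using $|\frac{f'(x)-f'(y)}{x-y}|\le\|f''\|_{L^\infty}$ the two drift contributions are Lipschitz in $t$ uniformly in $N$, Burkholder--Davis--Gundy gives $\bE|M_N^f(t)-M_N^f(s)|^2\le C_f|t-s|/N^2$, and $|\langle f,L_N^{\beta}(t)\rangle|\le\|f\|_{L^\infty}$; hence the Aldous (or Kolmogorov--Chentsov) tightness criterion applies. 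For step (ii) one repeats the estimate \eqref{eq-1.19-compact 2}: for every $\varepsilon>0$ there is a compact $K_\varepsilon\subset\cP(\bR)$, of the form $\{\nu:\langle\varphi,\nu\rangle\le R_\varepsilon\}$, with $\sup_N\bP(\exists t\in[0,T]:L_N^{\beta}(t)\notin K_\varepsilon)<\varepsilon$. Step (iii) is then an application of the standard criterion for tightness of continuous measure-valued processes (of Roelly--Coppoletta type, as used in \cite{Rogers1993}): tightness of $\{\langle \tilde f_k,L_N^{\beta}(\cdot)\rangle\}_N$ for a countable family $\{\tilde f_k\}$ dense in $C_0(\bR)$ together with the compact-containment condition implies tightness of $\{L_N^{\beta}\}_N$ in $C([0,T],\cP(\bR))$.

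For step (iv), let $\mu$ be a subsequential weak limit along $\{N_k\}$; by the Skorokhod representation theorem we may assume $L_{N_k}^{\beta}\to\mu$ almost surely in $C([0,T],\cP(\bR))$. As in the estimate preceding \eqref{eq-1.20-martingale->0}, $\sup_{t\le T}|M_{N_k}^f(t)|\to0$ in $L^2$; the $N_k^{-1}$ drift term vanishes because $\|f''\|_{L^\infty}<\infty$; and since $(x,y)\mapsto\frac{f'(x)-f'(y)}{x-y}$, with value $f''(x)$ on the diagonal, is bounded and continuous on $\bR^2$, the weak convergence $L_{N_k}^{\beta}(s)\otimes L_{N_k}^{\beta}(s)\to\mu_s\otimes\mu_s$ lets one pass to the limit in the double integral. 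Thus $\mu$ satisfies \eqref{eq-equation for Dyson limit measure}. By the uniqueness of the solution of \eqref{eq-equation for Dyson limit measure}, established in Step~4 of the proof of Theorem~\ref{Thm-Wigner dynamic-strong} (and in \cite[Lemma 4.3.15]{Anderson2010} without the self-similarity reduction), all subsequential limits coincide with the same deterministic $\mu$. Finally, a tight sequence in a Polish space all of whose subsequential limits equal a single deterministic element converges in probability to that element, which yields the claim.

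The main obstacle I expect will be step (iii): one must carefully invoke the measure-valued tightness criterion and check that its hypotheses hold uniformly in $N$, the delicate point being that tightness of countably many one-dimensional projections together with compact containment genuinely upgrades to tightness of the $\cP(\bR)$-valued processes. The remaining steps are essentially inherited from the proof of Theorem~\ref{Thm-Wigner dynamic-strong}: the moment estimates are identical, the passage to the limit in the quadratic term is routine once one notes the integrand extends continuously across the diagonal, and the upgrade from convergence in law to convergence in probability is immediate once the uniqueness of the limiting equation is in hand.
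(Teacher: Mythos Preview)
Your proposal is correct and follows the route the paper itself briefly signals (``one may first obtain \eqref{eq-1.15-Holder estimate} by It\^{o} calculus and then apply the Burkholder--Davis--Gundy inequality\ldots''), but the paper then deliberately takes a different path for the tightness estimate. Instead of decomposing $\langle f,L_N^{\beta}(t)\rangle$ via the eigenvalue SDE and using BDG plus a compact-containment/projection criterion, the paper bounds the increments of $\langle f,L_N^{\beta}(\cdot)\rangle$ directly through the matrix entries: the Hoffman--Wielandt inequality controls $\sum_i|\lambda_i^{N,\beta}(t)-\lambda_i^{N,\beta}(s)|^2$ by $\sum_{i,j}|X_{i,j}^{N,\beta}(t)-X_{i,j}^{N,\beta}(s)|^2$, and Fernique's theorem gives uniform control on the $(\tfrac12-\varepsilon)$-H\"older norms of the entries. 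This yields the Kolmogorov-type moment bound \eqref{eq-tightness moment} (with $a=3$, $b=1-4\varepsilon$) without ever invoking the eigenvalue SDE or the function $\varphi$, and tightness in $C([0,T],\cP(\bR))$ is then read off from a single criterion (\cite[Proposition B.3]{Song2020}) rather than assembled from projections plus compact containment.

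Your approach is the more classical one (\cite{Rogers1993}) and makes the link to the limiting equation transparent, since the same It\^{o} decomposition is reused both for tightness and for identification. The paper's Hoffman--Wielandt/Fernique argument buys robustness: it uses only pathwise H\"older regularity of the entries, so it transfers verbatim to matrix models driven by non-semimartingale noise (fractional Brownian motion, as in \cite{Jaramillo2019, Pardo2016, Pardo2017, Song2020}), where your BDG estimate is unavailable. One small point: the second-moment bound $\bE|M_N^f(t)-M_N^f(s)|^2\le C_f|t-s|/N^2$ you quote is borderline for Kolmogorov--Chentsov; you would either take a higher BDG moment (fourth suffices) or commit to the Aldous criterion you also mention.
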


\begin{proof}[\textbf{\upshape Proof:}]
	The equation \eqref{eq-equation for Dyson limit measure} can be derived as in Theorem \ref{Thm-Wigner dynamic-strong} by using It\^{o} calculus and martingale theory. We only prove the convergence in law of the sequence $\{L_N^{\beta}(t), t\in[0,T]\}_{N \in \bN}$, noting that the limit is a deterministic measure. The key idea is to obtain the following moment estimation
	\begin{align} \label{eq-tightness moment}
		\bE \left[ \left| \left\langle f, L_N^{\beta}(t) \right\rangle - \left\langle f, L_N^{\beta}(s) \right\rangle \right|^{1+a} \right]
		\le C_{f,T} |t-s|^{1+b}, ~ \forall t,s \in [0,T], ~ \forall N \in \bN,
	\end{align}
	for some positive constants $a$ and $b$, and for all $f \in C^2(\bR)$ with bounded first and second derivatives. Here $C_{f,T}$ is a positive constant depending only on $f$ and $T$. Then the tightness of the sequence $\{L_N^{\beta}(t), t\in[0,T]\}_{N \in \bN}$ in $C([0,T], \cP(\bR))$ follows from \eqref{eq-tightness moment} and \cite[Proposition B.3]{Song2020}.
	
	To establish \eqref{eq-tightness moment}, one may first obtain \eqref{eq-1.15-Holder estimate} by It\^{o} calculus and then apply the Burkholder-Davis-Gundy inequality to get an upper bound for some even moment of the martingale term. In the following, we provide another approach to get \eqref{eq-tightness moment}, where the pathwise H\"{o}lder continuity of the matrix entries is used. This idea can also be found in \cite{Jaramillo2019, Pardo2016, Pardo2017, Song2020}.
	
	It is well known that almost all the paths of Brownian motion are
    $(1/2-\varepsilon)$-H\"{o}lder continuous for any
    $\varepsilon\in(0,1/2)$, and so are the paths of the entries in
    $X^{N,\beta}$. Consider the H\"{o}lder norm of the matrix entries
    $X_{i,j}^{N,\beta}(t)$,  for $1 \le i \le j \le N$,
	\begin{align*}
		\left\|X_{i,j}^{N,\beta} \right\|_{0,T; 1/2-\varepsilon}
		= \sup_{0 \le s < t \le T} \dfrac{\left| X_{i,j}^{N,\beta}(t) - X_{i,j}^{N,\beta}(s) \right|}{|t-s|^{1/2-\varepsilon}}.
	\end{align*}
	By the Fernique Theorem (\cite{Fernique1970}), we have the following estimation
	\begin{align} \label{eq-1.28-BM pathwise holder}
		\bE \left[ \exp \left(\alpha N \left\|X_{i,j}^{N,\beta} \right\|_{0,T; 1/2-\varepsilon}^2 \right) \right] < \infty,
	\end{align}
	where $\alpha=\alpha(\varepsilon, T)$ is a positive constant depending on $(\varepsilon, T)$ and $X$.
	
	By mean value theorem and Hoffman-Wielandt inequality (\cite[Lemma 2.1.19]{Anderson2010}), we have
	\begin{align} \label{eq-1.29-pathwise holder}
		& \left| \left\langle f, L_N^{\beta}(t) \right\rangle - \left\langle f, L_N^{\beta}(s) \right\rangle \right|^2
		= \left| \dfrac{1}{N} \sum_{i=1}^N \left( f \left( \lambda_i^{N,\beta} (t) \right) - f \left( \lambda_i^{N,\beta} (s) \right) \right) \right|^2 \nonumber \\
		\le~& \dfrac{\|f'\|_{L^{\infty}}^2}{N} \sum_{i=1}^N \left|  \lambda_i^{N,\beta} (t) - \lambda_i^{N,\beta} (s) \right|^2 
		\le \dfrac{\|f'\|_{L^{\infty}}^2}{N} \sum_{i,j=1}^N \left| X^{N,\beta}(t) - X^{N,\beta}(s) \right|^2.
	\end{align}
	This together with Minkowski inequality, the pathwise H\"{o}lder continuity of $X_{i,j}^{N,\beta}$ and Cauchy-Schwarz inequality yields
	\begin{align} \label{eq-1.30-pathwise holder}
		&\bE \left[ \left| \left\langle f, L_N^{\beta}(t) \right\rangle - \left\langle f, L_N^{\beta}(s) \right\rangle \right|^4 \right]
		\le \dfrac{\|f'\|_{L^{\infty}}^4}{N^2} \bE \left[ \left( \sum_{i,j=1}^N \left| X^{N,\beta}(t) - X^{N,\beta}(s) \right|^2 \right)^2 \right] 
		\le \dfrac{\|f'\|_{L^{\infty}}^4}{N^2} \left( \sum_{i,j=1}^N \left( \bE \left[ \left| X^{N,\beta}(t) - X^{N,\beta}(s) \right|^4 \right] \right)^{1/2} \right)^2 \nonumber \\
		\le~& \dfrac{\|f'\|_{L^{\infty}}^4}{N^2} \left( \sum_{i,j=1}^N \left( \bE \left[ \left\|X_{i,j}^{N,\beta} \right\|_{0,T; 1/2-\varepsilon}^4 |t-s|^{2-4\varepsilon} \right] \right)^{1/2} \right)^2 
		= \dfrac{\|f'\|_{L^{\infty}}^4 |t-s|^{2-4\varepsilon}}{N^4} \left( \sum_{i,j=1}^N \left( \bE \left[ N^2 \left\|X_{i,j}^{N,\beta} \right\|_{0,T; 1/2-\varepsilon}^4 \right] \right)^{1/2} \right)^2 \nonumber \\
		\le& \dfrac{\|f'\|_{L^{\infty}}^4 |t-s|^{2-4\varepsilon}}{N^2} \sum_{i,j=1}^N \bE \left[ N^2 \left\|X_{i,j}^{N,\beta} \right\|_{0,T; 1/2-\varepsilon}^4 \right].
	\end{align}
	Recall that the matrix $X^{N,\beta}$ is symmetric for $\beta=1$ and Hermitian for $\beta=2$. Also note that the upper-diagonal entries are i.i.d., as well as the diagonal entries. Thus,
	\begin{align} \label{eq-1.31-Holder norm-matrix entries}
		 \dfrac{1}{N^2} \sum_{i,j=1}^N \bE \left[ N^2 \left\|X_{i,j}^{N,\beta} \right\|_{0,T; 1/2-\varepsilon}^4 \right] 
		\le~ & \bE \left[ N^2 \left\|X_{1,1}^{N,\beta} \right\|_{0,T; 1/2-\varepsilon}^4 \right] + \bE \left[ N^2 \left\|X_{1,2}^{N,\beta} \right\|_{0,T; 1/2-\varepsilon}^4 \right] \nonumber \\
		\le~ & \dfrac{2}{\alpha^2} \bE \left[ \exp \left(\alpha N \left\|X_{1,1}^{N,\beta} \right\|_{0,T; 1/2-\varepsilon}^2 \right) \right]
		+ \dfrac{2}{\alpha^2} \bE \left[ \exp \left(\alpha N \left\|X_{1,2}^{N,\beta} \right\|_{0,T; 1/2-\varepsilon}^2 \right) \right],
	\end{align}
	where we use the inequality $x^2/2 \le e^x$ for $x \ge 0$. Therefore, by \eqref{eq-1.30-pathwise holder}, \eqref{eq-1.31-Holder norm-matrix entries} and \eqref{eq-1.28-BM pathwise holder}, we obtain \eqref{eq-tightness moment} with $a=3$ and $b=1-4\varepsilon$ for $\varepsilon < 1/4$.
\end{proof}

\begin{remark}
	The weak semi-circle law, which can be found in \cite[Theorem 2.1.1]{Anderson2010}, is recovered when we choose $X^{N,\beta}(0)=0$ and $t=1$ in Theorem \ref{Thm-Wigner dynamic-weak}. Thus, Theorem \ref{Thm-Wigner dynamic-weak} gives a dynamical proof of the weak semi-circle law.
\end{remark}

In the framework of free probability theory, \cite{Voi91, VDN92, Biane1997} showed that independent $N \times N$ random matrices converge to free random variables as $N$ tends to infinity. In this sense, the large $N$ limit of Brownian motion with values in the space of $N \times N$ Hermitian matrices is known as {free Brownian motion} (\cite[Theorem 1]{Biane1997}). More precisely, a (one-side) free (additive) Brownian motion $\{S(t), t \ge 0 \}$ is a non-commutative stochastic process that satisfies:
\begin{itemize}
	\item $S(0) = 0$;
	\item For $t_2 > t_1 \ge 0$, the law of $S(t_2) - S(t_1)$ is the semicircular distribution with mean $0$ and variance $t_2-t_1$;
	\item For all $n \in \bN$, and $t_n > \cdots > t_1 \ge 0$, the increments $S(t_1), S(t_2) - S(t_1), \ldots, S(t_n) - S(t_{n-1})$ are freely independent.
\end{itemize}
We refer the interested reader to \cite{Nourdin2014, HP00, MS17} and the references therein for this topic.

For the complex model ($\beta=2$), for $p \in \bN$, the $p$-th moment of the sequence of eigenvalue empirical measure processes $\{L_N^2(t), t \in [0,T] \}_{N \in \bN}$ was considered in \cite{pt07}, the motivation of which came from the study of moments for the GUE in \cite{Mehta2004}. By using It\^{o} calculus and martingale theory, \cite{pt07} established a recursive formula for the sequence $\{\langle x^{p}, L_N^2(t) \rangle, t \ge 0 \}_{p \in \bN}$ for $N \in \bN$, and proved that for $p \in \bN$, the sequence $\{\langle x^{2p}, L_N^2(t) \rangle, t \ge 0 \}_{N \in \bN}$ converges to $\langle x^{2p}, \mu_t \rangle$ uniformly in $t \in [0,T]$ almost surely and in $L^{2q}$ with $q \ge 1$. Moreover, \cite{pt07} also investigated the largest and least eigenvalue processes and showed that
\begin{align*}
	\max_{t \in [0,T]} \lambda_1^{N,\beta}(t) \to 2\sqrt{T}, ~
	\min_{t \in [0,T]} \lambda_N^{N,\beta}(t) \to -2\sqrt{T}, ~
	\text{as} ~ N \to \infty,
\end{align*}
almost surely.

It is natural to consider the fluctuation of the sequence $\{L_N^{\beta}(t)\}_{N \in \bN}$ around its limit $\mu$. Consider the random fluctuations
\begin{align*}
	\cL_N^{\beta}(f)(t) = N \big( \langle f,L_N^{\beta}(t) \rangle - \langle f, \mu_t \rangle \big),
\end{align*}
for test function $f$ belonging to some proper function space. For the complex Dyson Brownian motion \eqref{eq-Dyson BM SDE}, \cite[Theorem 1.1]{cd01} established the central limit theorem (CLT) for Chebyshev polynomials with null initial condition. Note that for monomials $f(x) = x^p$, $\cL_N^{2}(x^p)(t)$ is the fluctuation of the $p$th moment processes $\langle x^p, L_N^{2}(t) \rangle$ around the $p$th moment of the corresponding limit measure. By martingale theory, \cite[Theorem 4.3]{pt07} proved the convergence in distribution of $\cL_N^{2}(x^p)(t)$ to a centred Gaussian process $\cL^{2}(x^p)(t)$ characterized by a recursive formula. The CLT for the sequence $\{L_N^{\beta}(t)\}_{N \in \bN}$ with polynomial test functions was obtained in \cite[Theorem 4.3.20]{Anderson2010} and is presented below.

\begin{theorem}[\cite{Anderson2010}, Theorem 4.3.20] \label{Thm-CLT for Dyson}
	Let $T>0$ be a fixed number. Assume
    $\displaystyle
	\sup_{N\in\bN} \max_{1 \le i \le N} \left| \lambda_i^{N,\beta}(0) \right| < \infty
    $,
	and for all $n \in \mathbb N$, $p\ge 1$,
	\begin{align*}
		\sup_{N\in\bN} \bE \left[ \left| N \left( \left\langle x^n, L_N^{\beta}(0) \right\rangle - \left\langle x^n, \mu_0 \right\rangle \right) \right|^p \right]<\infty.
	\end{align*}
	Furthermore, assume that for any $f(x) \in \mathbb{C}[x]$, the initial value $\mathcal{L}_N^{\beta}(f)(0)$ converges in probability to a random variable $\cL^{\beta}(f)(0)$. Here, $\mathbb{C}[x]$ is the set of polynomials with complex coefficients.
	
	Then there exists a family of processes $\{\cL^{\beta}(f)(t), t\in [0,T]\}_{f \in \mathbb{C}[x]}$, such that for any $n \in \mathbb{N}$ and any polynomials $P_1, \ldots, P_n \in \mathbb{C}[x]$, the vector-valued process $\{(\mathcal{L}_N^{\beta}(P_1)(t), \ldots, \mathcal{L}_N^{\beta}(P_n)(t)), t \in [0,T]\}_{N \in \bN}$ converges in distribution to $\{(\cL^{\beta}(P_1)(t), \ldots, \cL^{\beta}(P_n)(t)), t \in [0,T] \}$.
	
	The limit processes $\{\cL^{\beta}(f)(t), t\in [0,T]\}_{f \in \mathbb{C}[x]}$ are characterized by the following properties:
	\begin{enumerate}
		\item For $P_1, P_2 \in \mathbb{C}[x]$, $\alpha_1, \alpha_2 \in \mathbb{C}$, $t \in [0,T]$,
		\begin{align*}
			\cL^{\beta}(\alpha_1 P_1 + \alpha_2 P_2)(t) = \alpha_1 \cL^{\beta}(P_1)(t) + \alpha_2 \cL^{\beta}(P_2)(t).
		\end{align*}
		\item The basis $\{\cL^{\beta}(x^n)(t), t\in [0,T]\}_{n \in \mathbb{N}}$ of $\{\cL^{\beta}(f)(t), t\in [0,T]\}_{f \in \mathbb{C}[x]}$ is characterised by
		\begin{align*}
			\cL^{\beta}(1)(t) = 0, \quad \cL^{\beta}(x)(t) = \cL^{\beta}(x)(0) + G_t^{\beta}(x),
		\end{align*}
		and for $n \ge 0$,
		\begin{align*} 
			\cL^{\beta}(x^{n+2})(t)
			= &\cL^{\beta}(x^{n+2})(0) + \dfrac{2-\beta}{2\beta} (n+2)(n+1) \int_{0}^t \langle x^n, \mu_s \rangle ds
			 + (n+2) \sum_{k=0}^n \int_0^t \cL^{\beta}(x^{n-k})(s) \langle x^k, \mu_s \rangle ds + G_t^{\beta}(x^{n+2}),
		\end{align*}
		where $\{G_t^{\beta}(x^n), t \in [0,T]\}_{n \in \mathbb{N}}$ is a family of centred Gaussian processes with the covariance
		\begin{align*} 
			\mathbb{E} \left[ G_t^{\beta}(x^n) G_s^{\beta}(x^m) \right]
			= \dfrac{2 mn}{\beta} \int_0^{t \wedge s} \langle x^{n+m-2}, \mu_u \rangle du, \quad n,m \ge 1.
		\end{align*}
	\end{enumerate}
\end{theorem}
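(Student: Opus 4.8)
The plan is to reduce everything, by the linearity of $f\mapsto\mathcal{L}_N^{\beta}(f)$, to the joint convergence in distribution of the vector $\bigl(\mathcal{L}_N^{\beta}(1)(t),\mathcal{L}_N^{\beta}(x)(t),\dots,\mathcal{L}_N^{\beta}(x^d)(t)\bigr)_{t\in[0,T]}$ for an arbitrary $d\in\bN$; the statement for general polynomials $P_1,\dots,P_r$ then follows from the continuous mapping theorem applied to linear combinations, with $d=\max_j\deg P_j$ because the recursion for $x^n$ involves only $x^k$ with $k\le n$. Starting from Step~1 of the proof of Theorem~\ref{Thm-Wigner dynamic-strong} with $f(x)=x^n$, so that $f'(x)=nx^{n-1}$, $f''(x)=n(n-1)x^{n-2}$ and $\frac{f'(x)-f'(y)}{x-y}=n\sum_{k=0}^{n-2}x^ky^{n-2-k}$, one obtains a closed integral equation for the moment processes $\langle x^n,L_N^{\beta}(t)\rangle$. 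Subtracting the corresponding equation~\eqref{eq-equation for Dyson limit measure} for $\mu_t$, multiplying by $N$, and writing $\langle x^k,L_N^{\beta}(s)\rangle=\langle x^k,\mu_s\rangle+\tfrac1N\mathcal{L}_N^{\beta}(x^k)(s)$ inside the quadratic term yields the hierarchy
\begin{align}\label{eq-plan-hierarchy}
	\mathcal{L}_N^{\beta}(x^n)(t)
	=\ &\mathcal{L}_N^{\beta}(x^n)(0)+\mathcal{M}_N^{n}(t)
	+\Bigl(\tfrac1\beta-\tfrac12\Bigr)n(n-1)\int_0^t\langle x^{n-2},L_N^{\beta}(s)\rangle\,ds\nonumber\\
	&+n\sum_{k=0}^{n-2}\int_0^t\mathcal{L}_N^{\beta}(x^k)(s)\,\langle x^{n-2-k},\mu_s\rangle\,ds
	+\frac{n}{2N}\sum_{k=0}^{n-2}\int_0^t\mathcal{L}_N^{\beta}(x^k)(s)\,\mathcal{L}_N^{\beta}(x^{n-2-k})(s)\,ds,
\end{align}
where $\mathcal{M}_N^{n}(t)=\frac{\sqrt2\,n}{\sqrt{\beta N}}\sum_{i=1}^{N}\int_0^t\bigl(\lambda_i^{N,\beta}(s)\bigr)^{n-1}dW_i(s)$ is a continuous martingale with $\langle\mathcal{M}_N^{n},\mathcal{M}_N^{m}\rangle_t=\frac{2nm}{\beta}\int_0^t\langle x^{n+m-2},L_N^{\beta}(s)\rangle\,ds$. (Here $\tfrac1\beta-\tfrac12=\tfrac{2-\beta}{2\beta}$ reproduces the coefficient in the statement after the change of index $n+2\leftrightarrow n$.)

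The first ingredient consists of uniform-in-$N$ moment estimates. Using $\sup_N\max_i|\lambda_i^{N,\beta}(0)|<\infty$ together with $\sup_N\bE[|\mathcal{L}_N^{\beta}(x^n)(0)|^p]<\infty$, I would prove by induction on $n$ (and, separately, on $q$) that $\sup_N\bE\bigl[\sup_{t\le T}\langle x^{2q},L_N^{\beta}(t)\rangle^p\bigr]<\infty$ and $\sup_N\bE\bigl[\sup_{t\le T}|\mathcal{L}_N^{\beta}(x^n)(t)|^p\bigr]<\infty$ for every $q,n\in\bN$ and $p\ge1$; the induction runs through \eqref{eq-plan-hierarchy} (and its analogue for $\langle x^{2q},L_N^{\beta}\rangle$), with the Burkholder--Davis--Gundy inequality controlling $\mathcal{M}_N^{n}$, Cauchy--Schwarz handling the quadratic terms, and Gr\"onwall's lemma closing the estimate. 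These bounds have three consequences: the last (nonlinear) term in \eqref{eq-plan-hierarchy} is $O(N^{-1})$ in every $L^p$, hence negligible; a Kolmogorov--Chentsov estimate $\bE\bigl[|\mathcal{L}_N^{\beta}(x^n)(t)-\mathcal{L}_N^{\beta}(x^n)(s)|^{2p}\bigr]\le C_{p}|t-s|^{p}$ holds (the drift contributing $|t-s|^{2p}$, the martingale $|t-s|^{p}$ via BDG), so each coordinate, and hence the whole vector together with $(\mathcal{M}_N^{n})_{n\le d}$, is tight in $C([0,T],\bC)$; and, combined with the a.s.\ convergence $L_N^{\beta}\to\mu$ from Theorem~\ref{Thm-Wigner dynamic-strong}, they give $\langle x^m,L_N^{\beta}(s)\rangle\to\langle x^m,\mu_s\rangle$ uniformly on $[0,T]$, almost surely, for every $m$.

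The second ingredient is a functional martingale central limit theorem (as in \cite{Anderson2010}). The moment convergence just noted shows $\langle\mathcal{M}_N^{n},\mathcal{M}_N^{m}\rangle_t\to\frac{2nm}{\beta}\int_0^t\langle x^{n+m-2},\mu_s\rangle\,ds$ uniformly on $[0,T]$, a deterministic continuous limit; since the $\mathcal{M}_N^{n}$ are continuous (so no jump condition is needed), the martingale CLT gives $(\mathcal{M}_N^{1},\dots,\mathcal{M}_N^{d})\Rightarrow(G^{\beta}_{\cdot}(x),\dots,G^{\beta}_{\cdot}(x^d))$ in $C([0,T],\bC^d)$, the limit being a centred Gaussian martingale with $\bE[G_t^{\beta}(x^n)G_s^{\beta}(x^m)]=\frac{2nm}{\beta}\int_0^{t\wedge s}\langle x^{n+m-2},\mu_u\rangle\,du$, jointly with and independent of the limit of $\mathcal{L}_N^{\beta}(\cdot)(0)$ (the driving Brownian motions being independent of the initial data). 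One then extracts a joint subsequential limit of $\bigl(\mathcal{L}_N^{\beta}(x^k),\mathcal{M}_N^{k},\mathcal{L}_N^{\beta}(x^k)(0)\bigr)_{0\le k\le d}$ (possible by the tightness above), realises it as an a.s.\ limit via Skorokhod's representation, and passes to the limit in \eqref{eq-plan-hierarchy} for all $n\le d$ simultaneously: the $O(N^{-1})$ term drops, $\langle x^{n-2},L_N^{\beta}\rangle\to\langle x^{n-2},\mu\rangle$, and the lower-order $\mathcal{L}_N^{\beta}(x^k)$ converge, so the limit satisfies precisely the recursion in the statement. That recursion has a unique solution --- $\mathcal{L}^{\beta}(1)\equiv0$, $\mathcal{L}^{\beta}(x)(t)=\mathcal{L}^{\beta}(x)(0)+G_t^{\beta}(x)$, and for $n\ge2$ the equation for $\mathcal{L}^{\beta}(x^n)$ is a linear integral equation in $t$ whose inhomogeneity is built from $G^{\beta}(x^n)$ and the already-determined lower orders --- so the subsequential limit is unique and the whole sequence converges; property~1 in the statement is immediate from the linearity of $\mathcal{L}_N^{\beta}(\cdot)$.

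I expect the main obstacle to be the first ingredient: propagating the uniform-in-$N$ $L^p$ bounds through the hierarchy \eqref{eq-plan-hierarchy}. The $n$-th equation couples to all lower moments and, through the bracket of $\mathcal{M}_N^{n}$, to the empirical moments $\langle x^{2q},L_N^{\beta}\rangle$, so all of these must be controlled together in a single induction; the singular drift is already tamed by the symmetrization used in Theorem~\ref{Thm-Wigner dynamic-strong}, but organizing the bookkeeping so that Gr\"onwall genuinely closes is the delicate point. Once those estimates are in hand, tightness, the negligibility of the $O(N^{-1})$ term, and the martingale CLT are essentially routine.
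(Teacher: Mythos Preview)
The paper does not give its own proof of this theorem: it is stated as \cite[Theorem 4.3.20]{Anderson2010} and the survey moves on immediately to Section~\ref{Sec:positive-definite matrices}. Your proposal is, in outline, exactly the argument carried out in \cite{Anderson2010}: derive the closed hierarchy \eqref{eq-plan-hierarchy} for $\mathcal{L}_N^{\beta}(x^n)$ from the computation \eqref{eq-Ito formula}, prove the uniform moment bounds by induction on the degree (this is \cite[Lemma 4.3.17]{Anderson2010} and is indeed the step that requires the most care), deduce tightness and the negligibility of the nonlinear $O(N^{-1})$ term, apply the martingale CLT (Rebolledo's theorem, as the brackets converge to a deterministic limit) to identify $(\mathcal{M}_N^{n})_n\Rightarrow(G^{\beta}_\cdot(x^n))_n$, and pass to the limit in the hierarchy, whose triangular structure forces uniqueness. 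Your derivation of \eqref{eq-plan-hierarchy} and of the bracket $\langle\mathcal{M}_N^{n},\mathcal{M}_N^{m}\rangle_t$ is correct, and you have also correctly flagged the main technical point, namely organizing the double induction (on the degree for $\mathcal{L}_N^{\beta}(x^n)$ and, through the brackets, on the empirical moments $\langle x^{2q},L_N^{\beta}\rangle$) so that Gr\"onwall closes.
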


\section{Positive-definite symmetric matrix valued processes} \label{Sec:positive-definite matrices}

\subsection{Brownian motions of ellipsoids} \label{Sec:BM of ellipsoids}

The study of stochastic processes with values in the space of positive-definite symmetric matrices, or the space of { ellipsoids}, can be dated back to at least \cite{Dynkin1961}, where a class of Markov processes were studied by using differential geometry. Later, the Brownian motions of ellipsoids were considered in \cite{Norris1986}, and some of the results in \cite{Dynkin1961, Orihara1970} were recovered without using differential geometry.

Let $B(t)$ be a $N \times N$ matrix whose entries are i.i.d. standard Brownian motions (matrix Brownian motion). Let $G^N(t)$ be a process on the group of invertible $N \times N$ matrices that solves the following matrix SDE
\begin{align*}
	dG^N(t) = dB(t) \circ G^N(t).
\end{align*}
Then the process $\{G^N(t+u)G^N(u)^{-1}: t \ge 0\}$ is identical in law to the process $G^N(t)$ and is independent of the process $\{G^N(r):r \in [0,u]\}$ for all $u>0$. The process $G^N(t)$ is known as the right-invariant Brownian motion. Let $X^N(t) = G^N(t) G^N(t)^\intercal$ and $Y^N(t) = G^N(t)^\intercal G^N(t)$, which are both Markov processes on the space of ellipsoids. The process $Y^N(t)$ is known as Dynkin’s Brownian motion.

Suppose that $G^N(0)$ is chosen such that $X^N(0)$ has distinct eigenvalues.

\begin{theorem}[\cite{Norris1986}, Theorem A] \label{Thm-BM ellipsoids-SDE}
	The eigenvalue processes of $X^N(t)$ never collide and never hit $0$ for all $t>0$ almost surely. The ordered eigenvalue processes $\lambda_1^N(t) > \cdots > \lambda_N^N(t) (>0)$ satisfy the following system of SDEs
	\begin{align} \label{eq-eigenvalue SDE ellipsoids}
		\dfrac{1}{2} d\left( \ln \lambda_i^N(t) \right)
		= dW_i(t) + \dfrac{1}{2} \sum_{j:j\neq i} \dfrac{\lambda_i^N(t) + \lambda_j^N(t)}{\lambda_i^N(t) - \lambda_j^N(t)} dt,
	\end{align}
	where $\{W_1(t), \ldots, W_N(t)\}$ are independent standard Brownian motions. Moreover,
	\begin{align*}
		\lim_{t\to \infty} \dfrac{\ln \lambda_i^N(t)}{t} = N+1-2i.
	\end{align*}
\end{theorem}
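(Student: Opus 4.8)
\emph{Overview.} The plan is to mimic the three-step structure of the proof of Theorem~\ref{Thm-Dyson SDE} (derive the eigenvalue SDE by matrix It\^o/Stratonovich calculus, then prove strong uniqueness and non-collision), and to handle the long-time asymptotics by a separate argument, since — as I will explain — they cannot be read off from \eqref{eq-eigenvalue SDE ellipsoids} alone. First note the positivity: because $G^N$ solves the \emph{linear} equation $dG^N=dB\circ G^N$ with $G^N(0)$ invertible, $\det G^N(t)=\det G^N(0)\exp(M(t)+ct)$ for a scalar Brownian motion $M$ and a constant $c$, so $G^N(t)$ is invertible for all $t\ge0$ a.s.; hence $X^N(t)=G^N(t)G^N(t)^\intercal$ is positive definite for all $t$, which is the ``never hit $0$'' assertion. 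For the SDE, a short computation with the Stratonovich rules underlying \eqref{eq-Ito for Stratonovich} and the transpose rule gives $dX^N=dB\circ X^N+X^N\circ dB^\intercal$, i.e.\ in It\^o form $dX^N=dB\,X^N+X^N\,dB^\intercal+\bigl(X^N+\mathrm{Tr}(X^N)I_N\bigr)dt$. Writing the spectral decomposition $X^N=PDP^\intercal$ (smooth before the first collision, by \cite{Norris1986}), introducing the stochastic logarithm $Q$ of $P$, and taking the diagonal of \eqref{eq-Ito for Stratonovich} applied to $D=P^\intercal X^N P$ exactly as in Theorem~\ref{Thm-Dyson SDE}, one finds that the martingale part of $d\lambda_i^N$ is $2\lambda_i^N\,dW_i$ with $\{W_i\}$ independent standard Brownian motions (check $d\langle W_i,W_j\rangle=\delta_{ij}\,dt$ and use Knight's theorem), that the drift coming from $dX^N$ is $(\lambda_i^N+\sum_l\lambda_l^N)\,dt$, and that the Stratonovich correction, computed through $dQ_{ij}$ as in \eqref{eq-1.8-dPdXP}--\eqref{eq-1.10-finite variation term of 1.6}, is $\sum_{j\ne i}\frac{(\lambda_i^N)^2+(\lambda_j^N)^2}{\lambda_i^N-\lambda_j^N}\,dt$. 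Collecting terms gives $d\lambda_i^N=2\lambda_i^N\,dW_i+\lambda_i^N\bigl(2+\sum_{j\ne i}\frac{\lambda_i^N+\lambda_j^N}{\lambda_i^N-\lambda_j^N}\bigr)dt$, and It\^o's formula for $\ln\lambda_i^N$ (using $d\langle\lambda_i^N\rangle=4(\lambda_i^N)^2dt$) cancels the extra ``$2$'' and yields \eqref{eq-eigenvalue SDE ellipsoids}. Strong existence and uniqueness up to the first collision follow by regularizing the singular drift verbatim as in Step~2 of the proof of Theorem~\ref{Thm-Dyson SDE}.

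\emph{Non-collision.} Set $\mu_i=\ln\lambda_i^N$, so $d\mu_i=2\,dW_i+\sum_{j\ne i}\coth\!\bigl(\tfrac{\mu_i-\mu_j}{2}\bigr)dt$; since $\coth(x/2)\ge 2/x$ for $x>0$, this interaction is at least as singular and repulsive at coincidence as the Coulomb term in \eqref{eq-Dyson BM SDE}. I would apply McKean's argument (Lemma~\ref{Lemma-McKean argument}) with $Z=\prod_{i<j}(\mu_i-\mu_j)$ and $h=\ln$, that is, to $U(t)=\sum_{i<j}\ln(\mu_i(t)-\mu_j(t))$: an It\^o computation shows that the most singular contributions to the drift of $U$ (two terms $\pm\,4\sum_{i<j}(\mu_i-\mu_j)^{-2}$) cancel, and the remaining terms — after the same three-index symmetrization used in Step~3 of the proof of Theorem~\ref{Thm-Dyson SDE}, now applied to $\coth$ rather than the reciprocal — are bounded below on every compact time interval; since $h(Z)\to-\infty$ at a collision, Lemma~\ref{Lemma-McKean argument} gives $\tau_N=+\infty$ a.s.

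\emph{Lyapunov asymptotics.} Equation \eqref{eq-eigenvalue SDE ellipsoids} only yields a one-sided estimate: summing it over $i\le k$ and using $\frac{\lambda_i^N+\lambda_j^N}{\lambda_i^N-\lambda_j^N}>1$ for $i<j$ shows that the drift of $W_k(t):=\sum_{i\le k}\ln\lambda_i^N(t)$ is $\ge k(N-k)$, hence $\liminf_t W_k(t)/t\ge k(N-k)$; but near coincidences the $\coth$ interaction blows up, and no matching upper bound follows from \eqref{eq-eigenvalue SDE ellipsoids}. Instead I would use the linearity: $G^N(t)=\Phi(t)G^N(0)$ with $d\Phi=dB\circ\Phi$, $\Phi(0)=I$. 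The product $\prod_{i\le k}\lambda_i^N(t)$ of the top $k$ eigenvalues of $X^N(t)$ equals $\|\Lambda^kG^N(t)\|_{\mathrm{op}}^2$ (top singular value of the $k$th exterior power = product of the top $k$ singular values), and $\|\Lambda^kG^N(t)\|_{\mathrm{op}}^2$ differs from $\|\Lambda^k\Phi(t)\|_{\mathrm{op}}^2$ only by a factor bounded above and below in terms of $G^N(0)$; so it suffices to find the top Lyapunov exponent of the exterior-power flow $d(\Lambda^k\Phi)=D_k(dB)\circ(\Lambda^k\Phi)$, where $D_k$ is the derivation representation of $\mathfrak{gl}_N$ on $\Lambda^k\bR^N$. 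Tracking one decomposable $k$-vector $v(t)=\Phi(t)e_1\wedge\cdots\wedge\Phi(t)e_k$, so that $\ln|v(t)|^2=\ln\det\!\bigl(U(t)^\intercal U(t)\bigr)$ with $U(t)=[\Phi(t)e_1,\dots,\Phi(t)e_k]$, a direct It\^o computation gives drift exactly $k(N-k)$ and a martingale part with bracket linear in $t$; the crucial simplification is that on \emph{decomposable} $k$-vectors the diffusion coefficient $\sum_{a,b}\langle v,D_k(E_{ab})v\rangle^2$ equals $k|v|^4$ independently of the $k$-plane (for an orthonormal basis $U$ of the plane it is $\mathrm{Tr}\bigl((UU^\intercal)^2\bigr)=\mathrm{Tr}(UU^\intercal)=k$), so no ergodic averaging is needed. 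Therefore $\tfrac1t\ln|\Lambda^k\Phi(t)e_I|^2\to k(N-k)$ for each coordinate $k$-set $I$; sandwiching $\|\Lambda^k\Phi(t)\|_{\mathrm{op}}$ between $\max_I|\Lambda^k\Phi(t)e_I|$ and $\binom{N}{k}^{1/2}\max_I|\Lambda^k\Phi(t)e_I|$ upgrades this to $\tfrac1t W_k(t)\to k(N-k)$, and differencing in $k$ gives $\tfrac1t\ln\lambda_i^N(t)\to k(N-k)-(k-1)(N-k+1)=N+1-2i$.

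\emph{Where the work is.} The matrix calculus of the first paragraph is routine once one records the extra $\mathrm{Tr}(X^N)$-drift produced by the multiplicative noise. The two delicate points are the bounded-below estimate for the drift of $U$ in the non-collision step (the $\coth$ interaction makes the symmetrization bookkeeping heavier than in the Coulomb case) and — the main obstacle — the asymptotics: one genuinely has to leave the eigenvalue SDE \eqref{eq-eigenvalue SDE ellipsoids}, return to the linear flow $\Phi$, and compute Lyapunov exponents of its exterior powers, the whole argument hinging on the (somewhat lucky) constancy of the relevant quadratic form along the Grassmannian.
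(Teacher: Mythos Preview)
Your derivation of the eigenvalue SDE is correct and matches the paper's description: It\^o/Stratonovich calculus and martingale identification exactly as in Theorem~\ref{Thm-Dyson SDE}. Your non-collision argument via McKean's lemma applied to $U=\sum_{i<j}\ln(\mu_i-\mu_j)$ is also in the same spirit as what the paper reports --- Norris used a time-change for local martingales, which the paper explicitly notes is ``of the same spirit as the McKean's argument'' --- so there is no meaningful divergence there. (Your caveat that the three-index $\coth$ symmetrization needs care is fair; the crucial cancellation $2\coth(x/2)/x-4/x^2\ge 0$ that you identify is indeed what makes the singular part nonnegative, and the residual three-index sum is bounded once you observe it is a smooth symmetric function of the $\mu_i$ on the open Weyl chamber, hence locally bounded on compact time intervals before collision.)

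Where you genuinely depart from the paper is the long-time asymptotics. The paper (following \cite{Norris1986}) describes the route as ``constructing auxiliary processes with a comparison result'': one builds explicit upper and lower barrier processes for the partial sums $\sum_{i\le k}\ln\lambda_i^N$ from the SDE~\eqref{eq-eigenvalue SDE ellipsoids} and reads off the exponents from those. Your route instead abandons~\eqref{eq-eigenvalue SDE ellipsoids} entirely and goes back to the linear flow $\Phi$, computing the top Lyapunov exponent of each exterior power $\Lambda^k\Phi$ directly. The computation you sketch checks out: with $M=U^\intercal U$ for $U=[\Phi e_1,\dots,\Phi e_k]$, one finds $d\ln\det M$ has constant drift $k(N-k)$ and martingale bracket $4k\,dt$ (independently of $M$, which is your ``constancy along the Grassmannian''), so the strong law for martingales gives $\tfrac1t\ln|\Lambda^k\Phi(t)e_I|^2\to k(N-k)$, and your sandwich then yields the operator norm. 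This is arguably cleaner than the comparison method --- no auxiliary processes, no estimates on the repulsion term --- at the cost of requiring the exterior-algebra formalism and a somewhat miraculous cancellation. The comparison approach, by contrast, stays entirely within the eigenvalue SDE and is more robust to perturbations of the model, but needs the two-sided bounds that you correctly observe do not come for free from~\eqref{eq-eigenvalue SDE ellipsoids}. Both arguments are valid; yours is more in the spirit of Oseledets/Furstenberg, while the paper's is closer to classical SDE comparison.
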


The system of SDEs for eigenvalue processes was derived in \cite{Norris1986} by using It\^{o} calculus and martingale theory as in Theorem \ref{Thm-Dyson SDE}. The almost sure non-collision of the eigenvalue processes was proved by the theorem of time-change for local martingales (see \cite[Chapter 3, Theorem 4.6]{Karatzas1998}), which is of the same spirit as the McKean's argument (Lemma \ref{Lemma-McKean argument}). The long time behavior of the eigenvalue processes was studied by constructing auxiliary processes with a comparison result.

The eigenvector processes of $X^N(t)$ and $Y^N(t)$ were also investigated in \cite{Norris1986} by It\^{o} calculus and their behavior is very different. The eigenvector matrix of $X^N(t)$ ultimately behaves like Brownian motion on $O(N)$, while that of $Y^N(t)$ converges to a limiting value.

\subsection{Wishart processes} \label{Sec:Wishart}

Wishart process was introduced in \cite{Bru1989} to perform principal component analysis on a set of resistance data of Escherichia Coli to certain antibiotics. Let $B(t)$ be a $N \times p$ matrix whose entries are i.i.d. standard real Brownian motions (matrix Brownian motion). The $N \times N$ symmetric matrix $X^N(t) = (B(t) + A) (B(t) + A)^\intercal$, where $A$ is a $N \times p$ real deterministic matrix, is the Wishart process. By \cite{Bru91}, the Wishart process $X^N(t)$ solves the following matrix SDE
\begin{align} \label{eq-Wishart matrix SDE}
	dX^N(t) = \sqrt{X^N(t)} dW(t) + dW(t)^{\intercal} \sqrt{X^N(t)} + pI_Ndt,
\end{align}
where $W(t)$ is a $N \times N$ matrix Brownian motion. The ordered eigenvalue processes $\lambda_1^N(t) \ge \lambda_2^N(t) \ge \cdots \ge \lambda_N^N(t)$ of $X^N(t)$ was studied first in \cite{Bru1989}.

\begin{theorem}[\cite{Bru1989}, Theorem 1] \label{Thm-Wishart SDE}
	Assume that $X^N(0)$ has $N$ distinct eigenvalues $\lambda_1^N(0) > \lambda_2^N(0) > \cdots > \lambda_N^N(0)$. Denote the first collision time of the eigenvalue processes by
	\begin{align*}
		\tau_N = \inf \left\{ t>0: \exists \ i \neq j, ~\lambda_i^N(t) = \lambda_j^N(t) \right\}.
	\end{align*}
	Then
	\begin{align*}
		\bP \left( \tau_N = + \infty \right) = 1.
	\end{align*}
	Furthermore, the ordered eigenvalue processes $\lambda_1^N(t), \cdots, \lambda_N^N(t)$ of $X^N(t)$ satisfy the following system of SDEs
	\begin{align} \label{eq-SDE-Wishart eigenvalue}
		d\lambda_i^N(t)
		= 2\sqrt{\lambda_i^N(t)} dW_i(t)
		+ \left( p + \sum_{j:j\neq i} \dfrac{\lambda_i^N(t) + \lambda_j^N(t)}{\lambda_i^N(t) - \lambda_j^N(t)} \right) dt, ~ i \in\{1, \ldots, N\},
	\end{align}
	where $\{W_1(t), \ldots, W_N(t)\}$ are independent standard Brownian motions.
\end{theorem}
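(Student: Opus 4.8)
The plan is to follow the three–step scheme used for Theorem~\ref{Thm-Dyson SDE}, now applied to the matrix SDE~\eqref{eq-Wishart matrix SDE}; throughout I drop the superscript $N$ and work on the stochastic interval $[0,\tau_N)$, on which the spectral decomposition $X^N(t)=P(t)D(t)P(t)^\intercal$, $D(t)=\bd(\lambda_1(t),\dots,\lambda_N(t))$, can be chosen smooth (\cite{Norris1986}). \textbf{Step 1 (the eigenvalue SDEs).} As in the proof of Theorem~\ref{Thm-Dyson SDE}, I set $dQ=P^\intercal\circ dP$ (skew--symmetric by~\eqref{eq-skew symmetric of Q}), apply the Stratonovich chain rule~\eqref{eq-Ito for Stratonovich} to $X=PDP^\intercal$, read off the diagonal part $d\lambda_i=(P^\intercal\circ dX\circ P)_{ii}$, and convert it to the It\^o form $d\lambda_i=(P^\intercal dX P)_{ii}+\tfrac12\big(dP^\intercal dX P+P^\intercal dX dP\big)_{ii}$. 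Since $X$ is symmetric and $\sqrt X=P\sqrt D P^\intercal$, writing $d\widetilde W:=P^\intercal dW P$ gives $P^\intercal dX P=\sqrt D\,d\widetilde W+d\widetilde W^\intercal\sqrt D+pI_N\,dt$. Orthogonality of the columns of $P$ yields $d\langle\widetilde W_{ij},\widetilde W_{kl}\rangle=\delta_{ik}\delta_{jl}\,dt$, whence: (a) the martingale part of $d\lambda_i$ is $2\sqrt{\lambda_i}\,dW_i$ with $W_i:=\widetilde W_{ii}$ independent standard Brownian motions (Knight's theorem, as in~\eqref{eq-1.7-martingale term of 1.6}); (b) the drift $pI_N\,dt$ of $X$ contributes $p\,dt$; and (c), exactly as in~\eqref{eq-1.8-dPdXP}, the Stratonovich--It\^o correction is $\sum_{j\ne i}\frac{(P^\intercal dX P)_{ij}(P^\intercal dX P)_{ji}}{\lambda_i-\lambda_j}$, where a one–line computation with the entries of $\widetilde W$ gives $(P^\intercal dX P)_{ij}(P^\intercal dX P)_{ji}=(\lambda_i+\lambda_j)\,dt$ for $i\ne j$. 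Combining (a)--(c) produces~\eqref{eq-SDE-Wishart eigenvalue}.

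\textbf{Step 2 (well-posedness before collision).} The system~\eqref{eq-SDE-Wishart eigenvalue} has a drift singular on $\{\lambda_i=\lambda_j\}$ and a diffusion coefficient $2\sqrt{\lambda_i}$ that is only $\tfrac12$--H\"older near $0$. I would truncate the singular part of the drift by a cutoff in the spirit of~\eqref{eq-def-psi} and extend $\sqrt{\cdot}$ to a globally Lipschitz function off a neighbourhood of $0$, obtaining for each cutoff level a system with globally Lipschitz coefficients; strong existence then follows from~\cite[Theorem~2.9]{Karatzas1998} and pathwise uniqueness from the Yamada--Watanabe criterion (needed because of the square root). Patching the localised solutions as in Step~2 of the proof of Theorem~\ref{Thm-Dyson SDE} gives a unique strong solution of~\eqref{eq-SDE-Wishart eigenvalue} up to $\tau_N$; the cone $\{\lambda_i\ge 0\}$ is invariant (since $X^N(t)$ is positive semidefinite) and $X^N$ is continuous, so the solution neither leaves the cone nor explodes, and by uniqueness it coincides with the ordered eigenvalue process, which is a continuous semimartingale on $[0,\tau_N)$ by Step~1; in particular $\tau_N$ is the collision time of this solution.

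\textbf{Step 3 (non-collision).} I set $U(t)=\sum_{i<j}\ln\!\big(\lambda_i(t)-\lambda_j(t)\big)$ for $t<\tau_N$ and apply It\^o's formula using~\eqref{eq-SDE-Wishart eigenvalue} and $d\langle\lambda_i,\lambda_j\rangle=0$ for $i\ne j$; since $d\langle\lambda_i\rangle=4\lambda_i\,dt$, the It\^o correction contributes $-\sum_{i<j}\frac{2(\lambda_i+\lambda_j)}{(\lambda_i-\lambda_j)^2}\,dt$. Splitting off the $l=i,j$ terms shows that the drift of $\sum_{i<j}\frac{d\lambda_i-d\lambda_j}{\lambda_i-\lambda_j}$ equals $\sum_{i<j}\frac{2(\lambda_i+\lambda_j)}{(\lambda_i-\lambda_j)^2}\,dt+\sum_{i<j}\sum_{l\ne i,j}\frac{1}{\lambda_i-\lambda_j}\Big(\frac{\lambda_i+\lambda_l}{\lambda_i-\lambda_l}-\frac{\lambda_j+\lambda_l}{\lambda_j-\lambda_l}\Big)\,dt$; the first sum cancels the It\^o correction, while the elementary identity $\frac{\lambda_i+\lambda_l}{\lambda_i-\lambda_l}-\frac{\lambda_j+\lambda_l}{\lambda_j-\lambda_l}=\frac{-2\lambda_l(\lambda_i-\lambda_j)}{(\lambda_i-\lambda_l)(\lambda_j-\lambda_l)}$ reduces the second to $-2\sum_{i<j}\sum_{l\ne i,j}\frac{\lambda_l}{(\lambda_i-\lambda_l)(\lambda_j-\lambda_l)}\,dt$, which vanishes after symmetrising over triples $\{i,j,l\}$ because $\sum_{k}\frac{x_k}{\prod_{m\ne k}(x_k-x_m)}=0$ for three distinct reals (the leading coefficient of the Lagrange interpolant of $t\mapsto t$). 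Hence $dU(t)$ is a pure continuous local martingale on $[0,\tau_N)$, so Lemma~\ref{Lemma-McKean argument}, applied with $Z(t)=\prod_{i<j}\big(\lambda_i(t)-\lambda_j(t)\big)$, $h=\ln$ and $P\equiv 0$, yields $\bP(\tau_N=+\infty)=1$.

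The real work lies in the bookkeeping of Step~1 — tracking the Stratonovich corrections and using orthogonal invariance of $W$ to exhibit the driving Brownian motions $W_i$ — together with the exact cancellation in Step~3; the square-root degeneracy in Step~2 is a routine point handled by Yamada--Watanabe, and the fact that $\lambda_N$ may reach $0$ when $p$ is small is harmless, since $0$ is not a singular point of the coefficients of~\eqref{eq-SDE-Wishart eigenvalue} away from collisions and the theorem asserts only non-collision.
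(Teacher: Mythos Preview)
Your proposal is correct and follows exactly the route the paper indicates: the paper does not spell out a proof of Theorem~\ref{Thm-Wishart SDE} but states that it ``can be proved following the idea of the proof of Theorem~\ref{Thm-Dyson SDE}'', i.e.\ It\^{o} calculus and martingale theory for the SDE derivation together with McKean's argument (Lemma~\ref{Lemma-McKean argument}) for non-collision, and you have faithfully executed precisely that program with the appropriate Wishart-specific computations (the $(\lambda_i+\lambda_j)\,dt$ cross-variation and the triple-sum cancellation via the Lagrange identity). Your Step~2 on strong well-posedness goes slightly beyond what the paper sketches, but it is a harmless addition, and everything else matches.
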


Theorem \ref{Thm-Wishart SDE} can be proved following the idea of the proof of Theorem \ref{Thm-Dyson SDE}. Similarly, the system of SDEs for eigenvalue processes can be derived by It\^{o} calculus and martingale theory. The almost sure non-collision of the eigenvalue processes can also be proved by the McKean's argument.

The eigenvector processes were also studied in \cite{Bru1989} by It\^{o} calculus. Under the same assumption as in Theorem \ref{Thm-Wishart SDE}, with an appropriate choice of unit eigenvalue vector processes, the system of SDEs for them were established in \cite[Theorem 2]{Bru1989}.

The assumption that the eigenvalues of $X^N(0)$ are distinct in Theorem \ref{Thm-Wishart SDE} automatically implies that $p \ge N-1$. For the case $p < N$, by \cite[Corollary 1]{Bru91}, Theorem \ref{Thm-Wishart SDE} is still valid for the set of non-trivial eigenvalue processes $\lambda_1^N(t), \cdots, \lambda_p^N(t)$. Note that the Wishart processes $X^N(t)$ is positive semi-definite, and is degenerate when $p<N$ for all $t$. In some situations, it interesting to know whether the Wishart processes is non-degenerated, which is equivalent to know the infiniteness of the hitting time of the least eigenvalue processes on $0$. By using the McKean's argument, \cite[Proposition 1]{Bru91} proved that $\lambda_N^N(t) > 0$ for all $t$ almost surely for the case $p > N$. For the critical case, $p=N$, the set of hitting time on $0$ ($\{t: \lambda_N^N(t) = 0\}$) has zero Lebesgue measure almost surely. \cite{Bru91} also considered the matrix model \eqref{eq-Wishart matrix SDE} whenever $p>0$ is not an integer, and proved that the conclusion of Theorem \ref{Thm-Wishart SDE} holds for the unique solution to \eqref{eq-Wishart matrix SDE} for $p>N-1$ (\cite[Theorem 2]{Bru91}).

Let $Y^N(t)$ be the complex analogue of $X^N(t)$, that is, $Y^N(t) = (\tilde{B}(t) + \tilde{A}) (\tilde{B}(t) + \tilde{A})^\intercal$, where $\tilde{B}(t)$ is a $N \times p$ complex matrix whose real and imaginary parts are independent matrix Brownian motions, and $\tilde{A}$ is a $N \times p$ complex deterministic matrix. Then $Y^N(t)$ is known as Laguerre process (\cite{Konig2001}). With minor modification to the Wishart case (Theorem \ref{Thm-Wishart SDE}), the non-collision property of the eigenvalue processes can be established and the following system of SDEs for eigenvalue processes can be obtained
\begin{align} \label{eq-SDE-Laguerre eigenvalue}
	d\lambda_i^N(t)
	= 2\sqrt{\lambda_i^N(t)} dW_i(t)
	+ 2 \left( p + \sum_{j:j\neq i} \dfrac{\lambda_i^N(t) + \lambda_j^N(t)}{\lambda_i^N(t) - \lambda_j^N(t)} \right) dt, ~ i \in\{ 1, \ldots, N\}.
\end{align}
See for example \cite[(2)]{pt09}, \cite[(1.2)]{Katori2011}.

The eigenvalue processes \eqref{eq-SDE-Laguerre eigenvalue} were treated as particle system in \cite{Konig2001}, and they were proved to evolve like $N$ independent squared Bessel processes of dimension $2(p-N+1)$ conditioned to no collision among each other, assuming $p\ge N$. For more properties of particle systems related to Brownian motions, we refer to \cite{Katori2004, Katori2007, Katori2011}.

The high-dimensional limits of the normalized eigenvalue processes $\{\lambda_i^N(t)/N\}_{1 \le i \le N}$ of \eqref{eq-SDE-Laguerre eigenvalue} was studied in \cite{Duvillard2001} by proving large deviation bounds. Denote the empirical measure process by
\begin{align*}
	L_N(t)(dx) = \dfrac{1}{N} \sum_{i=1}^N \delta_{\lambda_i^N(t)/N} (dx).
\end{align*}
The almost sure weak convergence of the sequence $\{L_N(t)\}_{N\in\bN}$ as well as the differential equation satisfied by the limiting measure-valued processes was established in \cite[Corollary 3.1]{Duvillard2001}, assuming that $p/N$ converges to a positive number $c$. Moreover, the limit $\mu$ is the well-known Mar\v{c}enko-Pastur law (free Poisson distribution). The results were recovered in \cite[Theorem 3.3]{pt09}.

For $p \in \bN$, the $p$-th moment of the sequence $\{L_N(t), t \in [0,T] \}$ of normalized Laguerre process was considered in \cite{pt09}. By using It\^{o} calculus and martingale theory, \cite{pt09} established a recursive formula for the sequence $\{\langle x^p, L_N(t) \rangle: t \ge 0 \}_{p \in \bN}$ for $N \in \bN$, and proved that as $N \to \infty$, $\langle x^p, L_N(t) \rangle$ converges to $\langle x^p, \mu_t \rangle$ uniformly in $t \in [0,T]$ almost surely and in $L^q$ with $q \ge 1$ for $p \in \bN$. Moreover, \cite{pt07} also investigated the largest eigenvalue processes $\lambda_1^N(t)$ and the least eigenvalue processes $\lambda_N^N(t)$, and showed that
\begin{align*}
	\max_{t \in [0,T]} \lambda_1^N(t) \to (1+\sqrt{c})^2 \sqrt{T}, ~
	\min_{t \in [0,T]} \lambda_N^N(t) \to (1-\sqrt{c})^2 \sqrt{T}, ~
	\text{as} ~ N \to \infty,
\end{align*}
almost surely, where $c = \lim_{N \to \infty} p/N$.

The fluctuation of the sequence $\{L_N(t), t \in [0,T] \}$ around its limit $\mu$ has also been studied. Denote the random fluctuation
\begin{align*}
	\cL_N(f)(t) = N \left( \left\langle f, L_N(t) \right\rangle - \left\langle f, \mu_t \right\rangle \right)
\end{align*}
for an appropriate test function $f$. \cite[Theorem 2.5]{cd01} established the CLT for a class of polynomial functions with null initial condition. Note that for monomials $f(x) = x^p$, the random fluctuation $\cL_N(x^p)(t)$ is the fluctuation of the moment processes $\langle x^p, L_N(t) \rangle$ around the corresponding moment of the limit measure. By martingale theory, \cite[Theorem 4.3]{pt07} proved the convergence in distribution of $\cL_N(x^p)(t)$ to a centred Gaussian process $\cL(x^p)(t)$ whose distribution is characterized by recursive formulas.

The Wishart process \eqref{eq-Wishart matrix SDE} was generalized in \cite{Graczyk2019} to a symmetric matrix valued process that solves the following matrix SDE
\begin{align} \label{eq-matrix SDE general Wishart}
	dX^N(t) = \sqrt{\left| X^N(t) \right|} dW(t) + dW(t)^{\intercal} \sqrt{\left| X^N(t) \right|} + pI_Ndt.
\end{align}
Its ordered eigenvalue processes $\lambda_1^N(t) \ge \cdots \ge \lambda_N^N(t)$ satisfy the following system of SDEs
\begin{align} \label{eq-SDE Graczyk general Wishart eigenvalue}
	d\lambda_i^N(t)
	= 2\sqrt{\left| \lambda_i^N(t) \right|} dW_i(t)
	+ \left( p + \sum_{j:j\neq i} \dfrac{\left| \lambda_i^N(t) \right| + \left| \lambda_j^N(t) \right|}{\lambda_i^N(t) - \lambda_j^N(t)} \right) dt, ~ i \in\{ 1, \ldots, N\},
\end{align}
which is known as squared Bessel particle system. \cite[Theorem 1]{Graczyk2019} proved the existence and uniqueness of the non-colliding strong solution for all $p \in \bR$. The conditions for the uniqueness of the strong solution were given in \cite[Theorem 2]{Graczyk2019}. Moreover, the necessary and sufficient conditions for the existence of non-negative solutions were provided in \cite[Theorem 3]{Graczyk2019}.

\section{Other matrix models and related particle systems driven by Brownian motion}\label{Sec:otherModels}

In \cite{Bru91}, M.-F. Bru generalized her Wishart process to the following symmetric matrix valued process.

\begin{theorem}[\cite{Bru91}, Theorem 2'] \label{Thm-general Wishart eigenvalue SDE}
	Let $X^N(0)$ be a symmetric non-negative definite deterministic $N \times N$ matrix with distinct eigenvalues. Let $W(t)$ be a matrix Brownian motion, then for $p, \beta, \gamma \in \bR$, the following matrix SDE
	\begin{align} \label{eq-Wishart SDE general}
		dX^N(t) = \gamma \left( \sqrt{X^N(t)} dW(t) + dW(t)^\intercal \sqrt{X^N(t)} \right) + 2 \beta X^N(t) dt + p\gamma^2 I_N dt,
	\end{align}
	has a unique weak solution in the set of symmetric $N \times N$ matrices if $p \in (N-1,N+1)$, and has a unique strong solution that is symmetric positive-definite if $p \ge N+1$.
	
	The ordered eigenvalue processes $\lambda_1^N(t) \ge \cdots \ge \lambda_N^N(t)$ of the unique solution never collide almost surely, and satisfy the following system of SDEs
	\begin{align} \label{eq-SDE eigenvalue general Wishart}
		d\lambda_i^N(t)
		= 2 \gamma \sqrt{\lambda_i^N(t)} dW_i(t)
		+ \left( p \gamma^2 + 2\beta \lambda_i^N(t) + \gamma^2 \sum_{j:j\neq i} \dfrac{\lambda_i^N(t) + \lambda_j^N(t)}{\lambda_i^N(t) - \lambda_j^N(t)} \right) dt, ~ i \in\{ 1, \ldots, N\},
	\end{align}
	where $W_1(t), \ldots, W_N(t)$ are independent standard Brownian motions.
	
	If $p \ge N+1$, $\lambda_N^N(t)>0$ for all $t>0$ almost surely. Furthermore, if $p \in \{1, 2, \ldots, N-1\}$, then the same results hold for the largest $p$ eigenvalue processes $\lambda_1^N(t), \ldots, \lambda_p^N(t)$.
\end{theorem}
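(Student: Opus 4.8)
The plan is to follow the three–step scheme of the proof of Theorem~\ref{Thm-Dyson SDE} (and of its Wishart counterpart, Theorem~\ref{Thm-Wishart SDE}): derive the eigenvalue system \eqref{eq-SDE eigenvalue general Wishart} by spectral calculus, discuss well–posedness of the matrix equation \eqref{eq-Wishart SDE general}, and establish non–explosion, non–collision and, when $p\ge N+1$, non–degeneracy via McKean's argument (Lemma~\ref{Lemma-McKean argument}). Replacing $W$ by $-W$ if necessary we may assume $\gamma\ge 0$; the case $\gamma=0$ is deterministic, so we take $\gamma>0$. For \emph{Step 1} (the eigenvalue SDEs), write the spectral decomposition $X^N(t)=P(t)D(t)P(t)^\intercal$, $D=\bd(\lambda_1^N,\dots,\lambda_N^N)$, which is smooth in $t$ up to the first time $\sigma$ at which two eigenvalues coincide, an eigenvalue reaches $0$, or the eigenvalues explode. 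Exactly as in the derivation of \eqref{eq-Ito for spectral of Dyson}, Stratonovich calculus gives $d\lambda_i^N=(P^\intercal\circ dX^N\circ P)_{ii}$. Since $\sqrt{X^N}=P\sqrt D\,P^\intercal$, hence $P^\intercal\sqrt{X^N}=\sqrt D\,P^\intercal$, the martingale part of $(P^\intercal\,dX^N\,P)_{ii}$ equals $2\gamma\sqrt{\lambda_i^N}\,(P^\intercal\,dW\,P)_{ii}$; the orthogonality of the columns of $P$ shows the $(P^\intercal\,dW\,P)_{ii}$ have quadratic covariation $\delta_{ij}\,dt$, so Knight's theorem produces independent standard Brownian motions $W_1,\dots,W_N$ with $(P^\intercal\,dW\,P)_{ii}=dW_i$. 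The drift of $X^N$ contributes $(P^\intercal(2\beta X^N+p\gamma^2 I_N)P)_{ii}=2\beta\lambda_i^N+p\gamma^2$, while the It\^o correction from converting the Stratonovich product, handled as in \eqref{eq-1.8-dPdXP}--\eqref{eq-1.10-finite variation term of 1.6}, equals $\sum_{j\neq i}(\lambda_i^N-\lambda_j^N)^{-1}\,d\langle(P^\intercal dX^N P)_{ij},(P^\intercal dX^N P)_{ji}\rangle=\gamma^2\sum_{j\neq i}\frac{\lambda_i^N+\lambda_j^N}{\lambda_i^N-\lambda_j^N}\,dt$, again by the orthogonality of $P$. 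Summing the three contributions yields \eqref{eq-SDE eigenvalue general Wishart} on $[0,\sigma)$.

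\emph{Step 2: well–posedness of \eqref{eq-Wishart SDE general}.} On the open cone of positive–definite symmetric matrices $A\mapsto\sqrt A$ is real analytic, hence locally Lipschitz, so for $p\ge N+1$ classical SDE theory (\cite{Karatzas1998}) gives a unique strong solution of \eqref{eq-Wishart SDE general} up to the exit time of the cone; Step~3 will show this exit time is a.s.\ infinite, producing a global positive–definite strong solution. For $p\in(N-1,N+1)$ the solution may reach the boundary of the cone, where $\sqrt{\cdot}$ is only H\"older of order $\tfrac12$, so one works with weak solutions: existence follows by smoothing the square–root coefficient (or, for integer $p$, from the representation $X^N=(B+A)(B+A)^\intercal$) and passing to the limit, and uniqueness in law follows from well–posedness of the martingale problem for the eigenvalue system \eqref{eq-SDE eigenvalue general Wishart} — a squared–Bessel–type particle system whose coefficients are smooth off the non-collision set, which is not hit by Step~3 — together with the Stratonovich equation on $O(N)$ governing the eigenvector frame. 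When $p\in\{1,\dots,N-1\}$ the solution is a.s.\ of rank $p$ and supported on a random $p$–dimensional subspace evolving smoothly in $t$; restricting to this subspace reduces matters to the non-degenerate situation, and the non-collision property and the system \eqref{eq-SDE eigenvalue general Wishart} persist for $\lambda_1^N,\dots,\lambda_p^N$ (cf.\ \cite{Bru91}).

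\emph{Step 3: non–explosion, non–collision and non–degeneracy.} Summing \eqref{eq-SDE eigenvalue general Wishart} over $i$ and using $\sum_{i\neq j}\frac{\lambda_i^N+\lambda_j^N}{\lambda_i^N-\lambda_j^N}=0$ by antisymmetry, $\bT X^N=\sum_i\lambda_i^N$ solves the affine SDE $d\,\bT X^N=2\gamma\sum_i\sqrt{\lambda_i^N}\,dW_i+(Np\gamma^2+2\beta\,\bT X^N)\,dt$, which does not explode in finite time; since $0\le\lambda_i^N\le\bT X^N$, no eigenvalue explodes. For non-collision set $U(t)=\sum_{i<j}\ln(\lambda_i^N(t)-\lambda_j^N(t))$: by It\^o's formula the second-order terms give $-2\gamma^2\sum_{i<j}\frac{\lambda_i^N+\lambda_j^N}{(\lambda_i^N-\lambda_j^N)^2}\,dt$, which is exactly cancelled by the $k=j$ terms arising in the drift, and the remaining triple sum $\sum_{i\neq j\neq k\neq i}\frac{\lambda_i^N+\lambda_k^N}{(\lambda_i^N-\lambda_j^N)(\lambda_i^N-\lambda_k^N)}$ vanishes by the identities $\sum_{\mathrm{cyc}}\frac1{(x-y)(x-z)}=0$ and $\sum_{\mathrm{cyc}}\frac{x}{(x-y)(x-z)}=0$, so what remains is $dU=dM+2\beta\binom{N}{2}\,dt$ with $M$ a continuous local martingale; Lemma~\ref{Lemma-McKean argument} (with $\min_{i<j}(\lambda_i^N-\lambda_j^N)$ in the role of $Z$, using that the eigenvalues stay bounded) then gives $\tau_N=+\infty$ a.s. Finally, for $p\ge N+1$ put $V(t)=\sum_i\ln\lambda_i^N(t)$; It\^o's formula together with $\sum_{i\neq j}\frac{\lambda_i^N+\lambda_j^N}{\lambda_i^N(\lambda_i^N-\lambda_j^N)}=-(N-1)\sum_i(\lambda_i^N)^{-1}$ yields $dV=dM'+\bigl(2\beta N+\gamma^2(p-N-1)\sum_i(\lambda_i^N)^{-1}\bigr)\,dt$, whose drift is bounded below when $p\ge N+1$, while $V\to-\infty$ as $\lambda_N^N\downarrow 0$; a second application of Lemma~\ref{Lemma-McKean argument} gives $\lambda_N^N(t)>0$ for all $t>0$ a.s.

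\emph{Main obstacle.} Steps~1 and~3 are close to the Dyson and Wishart computations already displayed and should be routine. The genuine difficulty lies in Step~2 in the regime $N-1<p<N+1$ (and in the rank-deficient case $p<N$): there $\sqrt{X^N}$ is merely H\"older near the degenerate set, so classical Lipschitz existence/uniqueness theory does not apply and one must instead establish well–posedness of the associated martingale problem — the technical core of Bru's argument.
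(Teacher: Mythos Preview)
The paper does not give a proof of this theorem; it is quoted from \cite{Bru91} without argument. Your three–step scheme (spectral It\^o calculus, well–posedness, McKean's argument) is exactly the template the survey develops in detail for Theorem~\ref{Thm-Dyson SDE} and recommends for Theorems~\ref{Thm-Wishart SDE} and~\ref{Thm-Graczyk SDE}, so your proposal matches the paper's methodology. The computations in Steps~1 and~3 are correct: in particular your quadratic–covariation calculation $d\langle (P^\intercal dX^N P)_{ij}\rangle=\gamma^2(\lambda_i^N+\lambda_j^N)\,dt$, the drift identity for $U$ yielding $dU=dM+2\beta\binom{N}{2}\,dt$, and the identity $\sum_{i\neq j}\frac{\lambda_i+\lambda_j}{\lambda_i(\lambda_i-\lambda_j)}=-(N-1)\sum_i\lambda_i^{-1}$ giving the drift $2\beta N+\gamma^2(p-N-1)\sum_i\lambda_i^{-1}$ for $V=\sum_i\ln\lambda_i$ are all right, and you correctly appeal to non-explosion to ensure $U\to-\infty$ (resp.\ $V\to-\infty$) exactly when a collision (resp.\ degeneracy) occurs. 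You are also honest that Step~2 in the regime $p\in(N-1,N+1)$ is the genuine technical core left to \cite{Bru91}.
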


\begin{remark}
	\begin{enumerate}
		\item The matrix model \eqref{eq-Wishart SDE general} reduces to the Wishart process \eqref{eq-Wishart matrix SDE} when $\beta=0$ and $\gamma=1$.
		\item When $p \in \bZ$ and $\gamma=1$, the system of SDEs for eigenvalue processes was derived in \cite{Kendall1990} in shape theory.
		\item The system of SDEs \eqref{eq-SDE eigenvalue general Wishart} reduces to \eqref{eq-SDE-Laguerre eigenvalue} when $\beta=0$ and $\gamma^2=2$.
	\end{enumerate}
\end{remark}

For $\beta, \gamma \ge 0$, the singular value of $X^N(t)$ given by \eqref{eq-Wishart SDE general} was studied in \cite{kahn2020}. Let $s_i^N(t) = \sqrt{\lambda_i^N(t)}, i \in\{ 1, \ldots, N\}$ be the singular value processes of $X^N(t)$. Then
\begin{align*}
	ds_i^N(t) = \gamma dW_i(t) + \left( \dfrac{(p-1) \gamma^2}{2 s_i^N(t)} - \beta s_i^N(t) + \dfrac{\gamma^2}{2 s_i^N(t)} \sum_{j:j\neq i} \dfrac{ \big(s_i^N(t)\big)^2 + \big(s_j^N(t)\big)^2}{\big(s_i^N(t)\big)^2 - \big(s_j^N(t)\big)^2} \right) dt.
\end{align*} 
It was obtained in \cite[Theorem 1.1]{kahn2020} the convergence in probability of the sequence of empirical measure processes
\begin{align*}
	\widetilde{L}_N(t)(dx) = \dfrac{1}{2N} \sum_{i=1}^N \Big( \delta_{s_i^N(t)/\sqrt{p}} (dx) + \delta_{-s_i^N(t)/\sqrt{p}} (dx) \Big)
\end{align*}
under general initial conditions. Moreover, the long time behavior of the empirical measure process $\{\lambda_i^N(t)/p, 1 \le i \le N\}$ was also characterized in \cite[Theorem 1.2]{kahn2020}.

A more general class of real symmetric matrix valued processes was introduced in \cite{Graczyk2013}, which is the solution to
\begin{align} \label{SDE-matrix}
	dX_t^N = g_N(X_t^N) dB_t h_N(X_t^N) + h_N(X_t^N) dB_t^{\intercal}
	g_N(X_t^N) + b_N(X_t^N) dt, \quad t\ge 0,
\end{align}
in the space of real symmetric $N \times N$ matrices. Here, $B_t$ is a
$N \times N$ matrix Brownian motion, and the functions $g_N, h_N, b_N
: \bR \rightarrow \bR$ act on the spectrum of $X_t^N$.
(Note. For a real-valued function $f$ and a real symmetric (or complex Hermitian) matrix $X$ that has spectral decomposition $X=\sum_{j=1}^N \alpha_j u_ju_j^*$ with eigenvalues $\{\alpha_j\}_{1 \le j \le N}$ and eigenvectors $\{u_j\}_{1 \le j \le N}$, $f(X)=\sum_{j=1}^N f( \alpha_j) u_ju_j^*$ is the matrix obtained by acting $f$ on the spectrum of $X$.)

The symmetric matrix valued process \eqref{SDE-matrix} extends the previous matrix models in the following aspects: 
\begin{enumerate}
	\item  If we take $g_N(x) = \frac{1}{\sqrt{2N}}$, $h_N(x) = 1$ and $b_N(x) = 0$ in \eqref{SDE-matrix}, then $X^N$ becomes the real symmetric matrix Brownian motion $X^{N,\beta}$ considered in Theorem~\ref{Thm-Dyson SDE}..
	
	\item  If we take $g_N(x) = \sqrt{\frac{\alpha}{2N}}$, $h_N(x) = 1$ and $b_N(x) = -\theta x$ in \eqref{SDE-matrix}, then $X^N$ becomes the real symmetric matrix given in \eqref{eq-matrix SDE general OU}. In particular, if $g_N(x) = \frac{1}{2\sqrt{N}}$, $h_N(x) = 1$ and $b_N(x) = -\frac{1}{2}x$, it is the real symmetric matrix OU process given in \eqref{eq-matrix SDE OU}.
	
	\item  If we take $g_N(x) = \sqrt{x}$, $h_N(x) = \gamma$ and $b_N(x) = 2 \beta x + p\gamma^2$ in \eqref{SDE-matrix}, then $X^N$ becomes the real symmetric matrix given in \eqref{eq-Wishart SDE general}. In particular, if $g_N(x) = \sqrt{x}$, $h_N(x) = 1/\sqrt{N}$, and $b_N(x) =p/N$, then the random matrix $Y^N=NX^N$ is the Wishart process $B^\intercal B$, where $B$ is a $p \times N$ Brownian matrix.
	
	\item  If we take $g_N(x) = \frac{1}{\sqrt{2N}}$, $h_N(x) = 1$ and $b_N(x) = - \frac{1}{2} V'(x)$ in \eqref{SDE-matrix}, then $X^N$ becomes the real symmetric matrix given in \eqref{eq-matrix SDE Li Xiang-Dong}.
	
	\item If we take $g_N(x) = \sqrt{x}$, $h_N(x) = \sqrt{1-x}$ and $b_N(x) = q - (q+r)x$ in \eqref{SDE-matrix} with $q,r > p-1$, then $X^N$ becomes matrix Jacobi processes. See \cite[(4.4)]{Graczyk2013}.
\end{enumerate}

In \cite{Graczyk2013}, the non-collision property of the eigenvalue
processes was established and the system of SDEs for ordered
eigenvalue processes was derived. The results are presented below
where   
$\cC^{1,1}(\bR) = \{f \in \cC^1(\bR): |f'(x) - f'(y)|/|x-y| < \infty\}$.

\begin{theorem}[\cite{Graczyk2013}, Theorems 3 and 5] \label{Thm-Graczyk SDE}
	Let $X_t^N$ be a real symmetric matrix valued stochastic process that solves \eqref{SDE-matrix}. Let $\lambda_1^N(t) \ge \cdots \ge \lambda_N^N(t)$ be the ordered eigenvalue processes of $X_t^N$ and denote the first collision time by
	\begin{align*}
		\tau_N = \inf\{t>0: \exists \ i \neq j, ~\lambda_i(t) = \lambda_j(t) \}.
	\end{align*}
	Suppose that the functions $b_N(x), g_N^2(x), h_N^2(x)$ are
    Lipschitz continuous. Besides, assume that $g_N^2(x) h_N^2(x)$ is
    convex or in $\cC^{1,1}(\bR)$. Furthermore, we assume $\lambda_1^N(0) > \cdots > \lambda_N^N(0)$.
	
	Then we have
    ~$	\bP \left( \tau_N = + \infty \right) = 1$.
	Moreover, the eigenvalue processes satisfy the following SDEs: for $1\le i \le N$,
	\begin{align} \label{SDE-eigenvalue}
		d\lambda_i^N(t)
		&= 2g_N(\lambda_i^N(t)) h_N(\lambda_i^N(t)) dW_i(t) + \left( b_N(\lambda_i^N(t)) + \sum_{j:j\neq i} \dfrac{G_N(\lambda_i^N(t), \lambda_j^N(t))}{\lambda_i^N(t) - \lambda_j^N(t)} \right) dt,
	\end{align}
	where $\{W_i(t)\}_{1\le i\le N}$ are independent Brownian motions and
	\begin{equation}\label{eq-Gn}
		G_N(x,y) = g_N^2(x) h_N^2(y) + g_N^2(y) h_N^2(x).
	\end{equation}
\end{theorem}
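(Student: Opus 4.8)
The plan is to mirror, step by step, the proof of Theorem~\ref{Thm-Dyson SDE}. In Step~1 I would derive the eigenvalue system \eqref{SDE-eigenvalue} on the stochastic interval $[0,\tau_N)$ from the spectral decomposition of $X_t^N$ by It\^o/Stratonovich calculus; in Step~2 I would establish that \eqref{SDE-eigenvalue} has a unique strong solution up to its first collision time by truncating the singular drift as in \eqref{eq-def-psi}--\eqref{eq-1.11-Dyson SDE approximation}; and in Step~3 I would obtain $\bP(\tau_N=+\infty)=1$ from McKean's argument, Lemma~\ref{Lemma-McKean argument}. Only the algebra changes relative to the Dyson case, because the diffusion and drift coefficients now involve $g_N,h_N,b_N$.

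For Step~1, write $X_t^N=P_tD_tP_t^\intercal$ with $D_t=\bd(\lambda_1^N(t),\dots,\lambda_N^N(t))$ and, for $t<\tau_N$, $P_t$ smooth in $X_t^N$ by \cite{Norris1986}. As in \eqref{eq-1.6-lambda SDE}, the It\^o formula on the spectral map gives $d\lambda_i^N=(P_t^\intercal\,dX_t^N\,P_t)_{ii}+\tfrac12\bigl(dP_t^\intercal\,dX_t^N\,P_t+P_t^\intercal\,dX_t^N\,dP_t\bigr)_{ii}$. Using $g_N(X_t^N)=P_t g_N(D_t)P_t^\intercal$, etc., and setting $\widetilde B_t:=P_t^\intercal dB_t P_t$ (whose entries satisfy $d\langle\widetilde B_{k\ell},\widetilde B_{k'\ell'}\rangle=\delta_{kk'}\delta_{\ell\ell'}\,dt$ by orthogonality of $P_t$), one finds $(P_t^\intercal\,dX_t^N\,P_t)_{k\ell}=g_N(\lambda_k^N)h_N(\lambda_\ell^N)\widetilde B_{k\ell}+h_N(\lambda_k^N)g_N(\lambda_\ell^N)\widetilde B_{\ell k}+b_N(\lambda_k^N)\delta_{k\ell}\,dt$. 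The diagonal martingale parts $2g_N(\lambda_i^N)h_N(\lambda_i^N)\widetilde B_{ii}$ are mutually orthogonal, so Knight's theorem produces independent Brownian motions $W_i$ and the term $2g_N(\lambda_i^N)h_N(\lambda_i^N)\,dW_i$, while the $b_N$-part of $dX_t^N$, being of bounded variation, only contributes $b_N(\lambda_i^N)\,dt$ and nothing to the correction term. As in \eqref{eq-1.8-dPdXP} the correction term collapses to $\sum_{j\neq i}(\lambda_i^N-\lambda_j^N)^{-1}\,d\langle(P_t^\intercal dX_t^N P_t)_{ij},(P_t^\intercal dX_t^N P_t)_{ji}\rangle$, and a direct computation of that off-diagonal bracket yields $\bigl(g_N^2(\lambda_i^N)h_N^2(\lambda_j^N)+g_N^2(\lambda_j^N)h_N^2(\lambda_i^N)\bigr)\,dt=G_N(\lambda_i^N,\lambda_j^N)\,dt$, which is \eqref{SDE-eigenvalue}. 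Step~2 then repeats the Dyson argument; the one new point is that the diffusion coefficient $2g_Nh_N=2\sqrt{g_N^2h_N^2}$ need not be Lipschitz, but the Lipschitz continuity of $g_N^2h_N^2$ gives $|2\sqrt{g_N^2h_N^2(x)}-2\sqrt{g_N^2h_N^2(y)}|\le 2\sqrt{L|x-y|}$, a Yamada--Watanabe modulus, so pathwise uniqueness of the regularised systems still holds and the patching as $R\to\infty$ goes through.

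For Step~3 I would apply Lemma~\ref{Lemma-McKean argument} to $Z_t=\prod_{i<j}|\lambda_i^N(t)-\lambda_j^N(t)|$, equivalently to $U(t)=\sum_{i<j}\ln|\lambda_i^N(t)-\lambda_j^N(t)|$, which vanishes exactly at a collision. Since $d\langle\lambda_i^N\rangle=4g_N^2(\lambda_i^N)h_N^2(\lambda_i^N)\,dt$ and $d\langle\lambda_i^N,\lambda_j^N\rangle=0$, It\^o's formula gives $dU=dM_t+(A_1+A_2+A_3)\,dt$, where $M_t=2\sum_{i\neq j}g_N(\lambda_i^N)h_N(\lambda_i^N)(\lambda_i^N-\lambda_j^N)^{-1}\,dW_i$ is a continuous local martingale, $A_1=\sum_{i<j}\frac{b_N(\lambda_i^N)-b_N(\lambda_j^N)}{\lambda_i^N-\lambda_j^N}$ comes from the $b_N$-drift, $A_3$ collects the ``$\ell=j$'' interaction terms together with the It\^o second-order terms, and $A_2$ collects the ``$\ell\neq i,j$'' triple sums. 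The identity
\begin{align*}
G_N(x,y)=g_N^2(x)h_N^2(x)+g_N^2(y)h_N^2(y)-\bigl(g_N^2(x)-g_N^2(y)\bigr)\bigl(h_N^2(x)-h_N^2(y)\bigr)
\end{align*}
is the crux: after substituting it, the genuinely divergent $(\lambda_i^N-\lambda_j^N)^{-2}$ contributions cancel and one is left with $A_3=-\sum_{i\neq j}\frac{(g_N^2(\lambda_i^N)-g_N^2(\lambda_j^N))(h_N^2(\lambda_i^N)-h_N^2(\lambda_j^N))}{(\lambda_i^N-\lambda_j^N)^2}$, bounded because $g_N^2$ and $h_N^2$ are Lipschitz; $A_1$ is likewise bounded by $\binom{N}{2}$ times the Lipschitz constant of $b_N$; and after symmetrising the triple sum (exactly as in the fifth equality of Step~3 of the proof of Theorem~\ref{Thm-Dyson SDE}) $A_2$ becomes a sum over triples $\{a,b,c\}\subset\{\lambda_i^N\}$ of the second-order divided difference $\frac{\phi(a)}{(a-b)(a-c)}+\frac{\phi(b)}{(b-a)(b-c)}+\frac{\phi(c)}{(c-a)(c-b)}$ of $\phi:=g_N^2h_N^2$, plus a bounded remainder produced by the $(g_N^2(\cdot)-g_N^2(\cdot))(h_N^2(\cdot)-h_N^2(\cdot))$-parts (whose individual summands blow up near coincidences but whose symmetrised sums do not, just as in the Dyson case). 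That divided difference equals $\tfrac12\phi''(\xi)$ for an intermediate point $\xi$ when $\phi\in C^2$, hence it is $\ge 0$ when $\phi$ is convex and bounded in absolute value by half the Lipschitz constant of $\phi'$ when $\phi\in\cC^{1,1}(\bR)$; either hypothesis makes $A_1+A_2+A_3$ bounded below. Thus $U(t)=U(0)+M_t+P(t)$ with $M$ a continuous local martingale, $M_0=0$, $P(t)=\int_0^t(A_1+A_2+A_3)\,ds$ and $\inf_{t<\tau_N\wedge T}P(t)>-\infty$ almost surely; since $\ln z\to-\infty$ as $z\downarrow 0$, Lemma~\ref{Lemma-McKean argument} yields $\tau_N=+\infty$ a.s.

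The main obstacle I anticipate is precisely this bookkeeping in Step~3: recognising that substituting the identity above makes the divergent $(\lambda_i^N-\lambda_j^N)^{-2}$ terms cancel, that the residual triple sum is exactly a sum of second-order divided differences of $g_N^2h_N^2$ — so that the convexity or $\cC^{1,1}$-regularity of $g_N^2h_N^2$ is the sharp condition under which $dU$ has a drift bounded below — and that the remaining pieces stay bounded by the Lipschitz constants of $b_N,g_N^2,h_N^2$ despite individual summands being singular near coincidences. The only other genuine (if minor) point is the Yamada--Watanabe estimate handling the non-Lipschitz diffusion coefficient $\sqrt{g_N^2h_N^2}$ in Step~2.
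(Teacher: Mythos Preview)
Your proposal is correct and follows exactly the approach the paper indicates: the paper does not give a detailed proof of Theorem~\ref{Thm-Graczyk SDE} but simply states that it ``can be proved following the idea used in the proof of Theorem~\ref{Thm-Dyson SDE}: the SDEs \eqref{SDE-eigenvalue} \ldots can be derived by It\^{o} calculus and martingale theory, and the almost sure non-collision of the eigenvalue processes can be proved by the McKean's argument.'' Your three steps implement precisely this, and you have correctly identified the two places where the algebra genuinely differs from the Dyson case---the Yamada--Watanabe modulus for the $\sqrt{g_N^2h_N^2}$ diffusion coefficient in Step~2, and the algebraic identity for $G_N$ together with the second divided difference of $g_N^2h_N^2$ in Step~3, which is exactly why the convexity/$\cC^{1,1}$ hypothesis on $g_N^2h_N^2$ appears.
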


\begin{remark}
	Similar results hold for the complex version of \eqref{SDE-matrix}. Namely, under the same conditions in Theorem \ref{Thm-Graczyk SDE}, the eigenvalue processes of the complex Hermitian matrix that solves the matrix SDE
	\begin{align*}
		dX_t^N = g_N(X_t^N) dW_t h_N(X_t^N) + h_N(X_t^N) dW_t^*
		g_N(X_t^N) + b_N(X_t^N) dt, \quad t\ge 0,
	\end{align*}
	where $W_t$ is a complex $N \times N$ matrix Brownian motion, never collide almost surely and satisfy the following system of SDEs
	\begin{align} \label{SDE eigenvalue complex}
		d\lambda_i^N(t)
		&= 2g_N(\lambda_i^N(t)) h_N(\lambda_i^N(t)) dW_i(t) + \left( b_N(\lambda_i^N(t)) + 2 \sum_{j:j\neq i} \dfrac{G_N(\lambda_i^N(t), \lambda_j^N(t))}{\lambda_i^N(t) - \lambda_j^N(t)} \right) dt,
	\end{align}
	where  $\{W_i(t)\}_{1\le i\le N}$ are independent Brownian motions.
\end{remark}

Theorem \ref{Thm-Graczyk SDE} (and its complex analogous) can be proved following the idea used in the proof of Theorem \ref{Thm-Dyson SDE}: the SDEs \eqref{SDE-eigenvalue} (and \eqref{SDE eigenvalue complex}) for eigenvalue processes can be derived by It\^{o} calculus and martingale theory, and the almost sure non-collision of the eigenvalue processes can be proved by the McKean's argument.

\begin{remark}
	The system of SDEs for eigenvector processes were also derived in \cite[Theorem 3]{Graczyk2013}. It was also shown in \cite[Corollary 3]{Graczyk2013} that the system of SDEs for eigenvalue processes and eigenvector processes admits a unique strong solution on $[0,\infty)$ if assuming that $G_N(x,y)$ is strictly positive on the set $\{(x,y) \in \bR^2: x \neq y\}$ together with all conditions in Theorem \ref{Thm-Graczyk SDE}. Another set of conditions for the existence and uniqueness of strong solution (before colliding/exploding) can be found in \cite{Song2019}. However, whether the pathwise uniqueness holds for the matrix SDE \eqref{SDE-matrix} is still unknown.
\end{remark}

Let $L_N(t)$ be the empirical measure process of the eigenvalue processes $\{\lambda_i^N(t)\}_ {1\le i\le N}$ of the symmetric matrix-valued processes $X^{N}(t)$ given in \eqref{SDE-matrix}, that is 
\begin{align*}
	L_N(t) (dx) = \dfrac{1}{N} \sum_{i=1}^N \delta_{\lambda_i^N(t)} (dx).
\end{align*}
The almost sure compactness of the sequence $\{L_N(t), t\in[0,T]\}_{N\in \mathbb N}$ was obtained in \cite{Song2019} by using the compactness argument presented in the proof of Theorem \ref{Thm-Wigner dynamic-strong}, and the equation for the limit measures was derived as well. Note that similar problems were also investigated in \cite{Malecki-arxiv} independently.

\begin{theorem}[\cite{Song2019}, Theorems 2.1 and 2.2] \label{Thm-Graczyk matrix}
	Let $T>0$ be a fixed number. Suppose that \eqref{SDE-eigenvalue} has a strong solution which does not explode or collide for $t \in [0,T]$. Assume the following conditions hold:
	\begin{enumerate}
		\item There exists a positive function $\varphi(x) \in C^2(\mathbb{R})$ such that
		$\lim\limits_{|x|\rightarrow +\infty} \varphi(x) = +\infty,$
		$\varphi'(x)b_N(x)$ is bounded with respect to $(x,N)$, and
          $\varphi'(x)g_N(x)h_N(x)$ satisfies, 		for some positive integer $l_1$, 
		\begin{align*}
			\sum_{N=1}^{\infty} \left( \dfrac{\|\varphi' g_N h_N\|_{L^{\infty}(dx)}^2}{N} \right)^{l_1} < \infty.
		\end{align*}
		
		\item The function
		$N G_N(x,y) \dfrac{\varphi'(x) - \varphi'(y)}{x-y}$
		is bounded with respect to $(x,y, N)$. 
		
		\item The empirical measure $L_N(0)$ converges weakly to a measure $\mu_0$ as $N$ goes to infinity almost surely, and
		\begin{align}\label{eq-C0}
			C_0 = \sup_{N>0} \langle \varphi, L_N(0) \rangle
			= \sup_{N>0} \dfrac{1}{N} \sum_{i=1}^N \varphi \left( \lambda_i^N(0) \right) < \infty.
		\end{align}
	
		\item There exists a sequence $\{\tilde{f}_k\}_{k\in \mathbb
          N}$ of $C^2(\bR)$  functions such that it is dense in the
          space $C_0(\mathbb{R})$ of continuous functions vanishing at
          infinity and  that $\tilde{f}_k'(x) g_N(x) h_N(x)$ satisfies,
          for some positive integer $l_2 \ge 2$,
		\begin{align}\label{eq-psi}
			\psi(k) = \sum_{N=1}^{\infty} \left( \dfrac{\|\tilde{f}_k' g_N h_N\|_{L^{\infty}(dx)}^2}{N} \right)^{l_2} < \infty.
		\end{align}

		\item There exist continuous functions $b(x)$ and $G(x,y)$, such that $b_N(x)$ converges to $b(x)$ and $N G_N(x,y)$ converges to $G(x,y)$ uniformly as $N$ tends to infinity.
	\end{enumerate}
	
	Then the sequence $\{L_N(t), t\in[0,T]\}_{N\in \mathbb N}$ is relatively compact in $C([0,T], M_1(\mathbb{R}))$ almost surely, i.e., every subsequence has a further subsequence that converges in $C([0,T], M_1(\mathbb{R}))$ almost surely. Furthermore, any limit measure $\mu$ in $C([0,T], M_1(\mathbb{R}))$ satisfies the equation
	\begin{align} \label{eq-limit equation Graczyk matrix}
		\langle f, \mu_t \rangle
		=& \langle f, \mu_0 \rangle + \int_{0}^t \langle bf', \mu_s \rangle ds
		+ \dfrac{1}{2} \int_{0}^t \left[ \iint_{\bR^2} \dfrac{f'(x) - f'(y)}{x-y} G(x,y) \mu_s(dx) \mu_s(dy) \right] ds,
	\end{align}
	for all $f \in C_b^2(\bR)$ such that $f'(x)b(x)$ and $\frac{f'(x)-f'(y)}{x-y} G(x,y)$ are bounded.
\end{theorem}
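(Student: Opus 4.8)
The plan is to follow the three-step scheme of the proof of Theorem~\ref{Thm-Wigner dynamic-strong}, with the constant interaction kernel $2/N$ of Dyson's Brownian motion replaced by the variable kernel $N G_N(x,y)$ from \eqref{eq-Gn}. \emph{Step 1 (It\^o decomposition of $\langle f, L_N(t)\rangle$).} For $f \in C^2(\bR)$ with bounded first and second derivatives, write $\langle f, L_N(t)\rangle = \frac1N\sum_{i=1}^N f(\lambda_i^N(t))$ and apply It\^o's formula together with the eigenvalue SDE \eqref{SDE-eigenvalue}. Using $d\langle\lambda_i^N\rangle_t = 4 g_N^2 h_N^2(\lambda_i^N(t))\,dt = 2 G_N(\lambda_i^N(t),\lambda_i^N(t))\,dt$ and the symmetry $G_N(x,y)=G_N(y,x)$ from \eqref{eq-Gn}, I swap $i\leftrightarrow j$ in the singular sum $\frac1N\sum_{i\neq j}\frac{f'(\lambda_i^N)\,G_N(\lambda_i^N,\lambda_j^N)}{\lambda_i^N-\lambda_j^N}$ and average, converting it into $\frac1{2N}\sum_{i\neq j}\frac{(f'(\lambda_i^N)-f'(\lambda_j^N))\,G_N(\lambda_i^N,\lambda_j^N)}{\lambda_i^N-\lambda_j^N}$; adjoining the diagonal with the convention $\frac{f'(x)-f'(y)}{x-y}=f''(x)$ on $\{x=y\}$ and collecting the second-order It\^o term, I obtain
\begin{align*}
	\langle f, L_N(t)\rangle = \langle f, L_N(0)\rangle + M_N^f(t) + \int_0^t \langle b_N f', L_N(s)\rangle\,ds + \frac12 \int_0^t \iint_{\bR^2} \frac{f'(x)-f'(y)}{x-y}\, N G_N(x,y)\, L_N(s)(dx)\,L_N(s)(dy)\,ds + \varepsilon_N^f(t),
\end{align*}
where $M_N^f(t) = \frac2N \sum_{i=1}^N \int_0^t (g_N h_N f')(\lambda_i^N(s))\,dW_i(s)$ has $\langle M_N^f\rangle_T \le \frac{4T}{N}\|f' g_N h_N\|_{L^\infty}^2$, and the remainder $\varepsilon_N^f(t) = \frac1{2N}\sum_i \int_0^t (f''\, G_N(\cdot,\cdot))(\lambda_i^N(s))\,ds$ — the direct analogue of the lower-order term in \eqref{eq-Ito formula} — is negligible as $N\to\infty$ under the hypotheses.

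\emph{Step 2 (Relative compactness).} I repeat Step~2 of the proof of Theorem~\ref{Thm-Wigner dynamic-strong}. With $f=\varphi$, the two drift terms in the decomposition are bounded uniformly in $N$ by a deterministic constant by assumptions~1--3; combining this with a Burkholder--Davis--Gundy (BDG) and Markov bound for $\sup_{t\le T}|M_N^\varphi(t)|$ and the summability $\sum_N(\|\varphi' g_N h_N\|_{L^\infty}^2/N)^{l_1}<\infty$ of assumption~1, the Borel--Cantelli lemma yields that almost surely, for all large $N$, $L_N(t)$ belongs for every $t\in[0,T]$ to the fixed tight set $K(\varphi)=\{\mu:\langle\varphi,\mu\rangle\le\mathrm{const}\}$. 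For each $\tilde f_k$ of the dense family of assumption~4, the decomposition gives a H\"older-type bound $|\langle\tilde f_k,L_N(t)\rangle-\langle\tilde f_k,L_N(s)\rangle| \le |M_N^{\tilde f_k}(t)-M_N^{\tilde f_k}(s)| + C_k|t-s|$ with $C_k$ uniform in $N$ by assumptions~2 and 5; applying BDG with the even exponent $2l_2$, the summability \eqref{eq-psi}, Markov, and Borel--Cantelli over a fine partition of $[0,T]$, one gets almost sure time-equicontinuity of $t\mapsto\langle\tilde f_k,L_N(t)\rangle$. The compactness criterion \cite[Lemma~4.3.13]{Anderson2010} used in Theorem~\ref{Thm-Wigner dynamic-strong} then produces a compact set $\mathcal H\subset C([0,T],M_1(\bR))$ with $\sum_N \bP(L_N\notin\mathcal H)<\infty$, whence almost sure relative compactness of $\{L_N\}_N$.

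\emph{Step 3 (Limit equation).} Fix a subsequence $N_k$ along which $L_{N_k}\to\mu$ in $C([0,T],M_1(\bR))$. A BDG/Markov estimate and Borel--Cantelli, exactly as in Step~2, give $\sup_{t\le T}|M_{N_k}^f(t)|\to 0$ almost surely and likewise $\varepsilon_{N_k}^f(t)\to 0$; the uniform convergences $b_N\to b$ and $N G_N\to G$ of assumption~5, together with $L_{N_k}(s)\to\mu_s$ weakly, uniformly in $s\in[0,T]$, permit passing to the limit in the $b_N$-drift integral and in the double integral (splitting the latter into the error from $N G_N-G$ and the error from $L_{N_k}\otimes L_{N_k}-\mu\otimes\mu$), the boundedness of $f'(x)b(x)$ and $\frac{f'(x)-f'(y)}{x-y}G(x,y)$ ensuring that the limiting integrals are finite and the passage legitimate. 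Letting $N=N_k\to\infty$ in the Step~1 decomposition yields \eqref{eq-limit equation Graczyk matrix} for every admissible $f\in C_b^2(\bR)$.

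\emph{Where the difficulty lies.} The crux is the singular interaction term: each summand $(\lambda_i^N-\lambda_j^N)^{-1}$ is unbounded near collisions, and the estimates close only after the symmetrization, which relies essentially on the symmetry of $G_N$ in \eqref{eq-Gn} to rewrite the sum as $\frac12\iint\frac{f'(x)-f'(y)}{x-y}\,N G_N(x,y)\,L_N\,L_N$ and thereby identify it with the doubly bounded kernel controlled by assumptions~2 and 5. A secondary point is the careful accounting of the second-order It\^o correction $\varepsilon_N^f$ and the verification that the exponents $l_1,l_2$ in assumptions~1 and 4 are large enough for every Borel--Cantelli series to converge; the non-collision and non-explosion of \eqref{SDE-eigenvalue} on $[0,T]$ needed to run It\^o's formula are \emph{assumed} in the statement, so no McKean-type argument is required here.
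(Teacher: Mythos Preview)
Your proposal is correct and follows essentially the same approach as the paper: the paper does not give an independent proof of Theorem~\ref{Thm-Graczyk matrix} but refers to \cite{Song2019} and explicitly says the almost sure relative compactness is obtained ``by using the compactness argument presented in the proof of Theorem~\ref{Thm-Wigner dynamic-strong},'' which is exactly the three-step template (It\^o decomposition with symmetrization of the $G_N$-interaction, $K(\varphi)$/equicontinuity compactness via BDG and Borel--Cantelli, and passage to the limit) that you carry out. Your handling of the It\^o correction $\varepsilon_N^f(t)=\frac{1}{2N}\sum_i\int_0^t f''(\lambda_i^N)G_N(\lambda_i^N,\lambda_i^N)\,ds$ and of the symmetrization via $G_N(x,y)=G_N(y,x)$ matches the intended argument.
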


\begin{remark}
	The equation \eqref{eq-limit equation Graczyk matrix} with the test function $f = (z-x)^{-1}$ for $z \in \bC_+$ was derived in \cite{Song2019}. Indeed, the computation therein is valid for all $f \in C_b^2(\bR)$.
\end{remark}

By using the symmetric polynomials and the tightness argument presented in the proof of Theorem \ref{Thm-Wigner dynamic-weak}, \cite{Malecki-arxiv} obtained the tightness of the sequence $\{L_N(t), t\in[0,T]\}_{N\in \mathbb N}$ for both the real case \eqref{SDE-eigenvalue} and the complex case \eqref{SDE eigenvalue complex} with general test functions in $C_b^2(\bR)$. The equation for the limit measures in law was also derived. The results are presented below.

\begin{theorem}[\cite{Malecki-arxiv}, Theorem 1] \label{Thm-Graczyk model weak}
	Assume that $g_N$, $h_N$ and $b_N$ are continuous and satisfy
	\begin{align*}
		g_N^2(x) + h_N^2(x) \le K(1+|x|), \
		|b_N(x)| \le KN (1 + |x|), \
		\forall x \in \bR, \ \forall N \in \bN,
	\end{align*}
	for some positive constant $K$. Suppose that
	\begin{align*}
		\sup_{N\in\bN} \int_{\bR} x^8 L_N(0)(dx) < \infty,
	\end{align*}
	then the sequence of the measure-valued processes $\{L_N(t), t\in[0,T]\}_{N\in \mathbb N}$ related to \eqref{SDE-eigenvalue} (resp. \eqref{SDE eigenvalue complex}) is tight. Furthermore, assuming that $g_N^2(x) \to g^2(x)$, $h_N^2(x) \to h^2(x)$ and $b_N(x)/N \to b(x)$ locally uniformly on $\bR$ as $N \to \infty$, then any limit measure $\mu$ of a weakly convergent subsequence in law is an element in $C([0,T], \cP(\bR))$ that satisfies
	\begin{align*}
		\langle f, \mu_t \rangle
		=& \langle f, \mu_0 \rangle + \int_{0}^t \langle bf', \mu_s \rangle ds
		+ \dfrac{\beta}{2} \int_{0}^t \left[ \iint_{\bR^2} \dfrac{f'(x) - f'(y)}{x-y} G(x,y) \mu_s(dx) \mu_s(dy) \right] ds,
	\end{align*}
	for all $t > 0$, for all $f \in C_b^2(\bR)$, where $\beta=1$ corresponds to the real case \eqref{SDE-eigenvalue} while $\beta = 2$ corresponds to the complex case \eqref{SDE eigenvalue complex}.
\end{theorem}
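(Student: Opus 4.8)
The plan is to follow the tightness route of the proof of Theorem~\ref{Thm-Wigner dynamic-weak}, but carried out entirely through It\^o calculus and the Burkholder--Davis--Gundy (BDG) inequality rather than through the Fernique/pathwise-H\"older argument used there: the latter relied on the matrix entries of $X^{N,\beta}$ being Gaussian processes with explicitly controlled H\"older norms, which is unavailable here because the noise in \eqref{SDE-matrix} is multiplicative. Tightness of $\{L_N(t),t\in[0,T]\}_{N\in\bN}$ in $C([0,T],\cP(\bR))$ will follow from two ingredients: (a) a uniform moment bound $\sup_{N}\bE\big[\sup_{t\le T}\langle x^{8},L_N(t)\rangle\big]<\infty$, which yields compact containment in $\cP(\bR)$ since $\{\mu:\langle\varphi,\mu\rangle\le R\}$ is relatively compact for $\varphi(x)=x^{8}$; and (b) the H\"older-type moment estimate \eqref{eq-tightness moment}, namely $\bE\big[|\langle f,L_N(t)\rangle-\langle f,L_N(s)\rangle|^{4}\big]\le C_{f,T}|t-s|^{2}$ uniformly in $N$, for every $f$ in a fixed countable convergence-determining family in $C_b^{2}(\bR)$. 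Granting (a) and (b), the tightness criterion used in the proof of Theorem~\ref{Thm-Wigner dynamic-weak} (cf.~\cite[Proposition~B.3]{Song2020}) applies verbatim and gives the asserted tightness.

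For (a) I would apply It\^o's formula to $\langle x^{2k},L_N(t)\rangle=\tfrac1N\sum_i(\lambda_i^{N}(t))^{2k}$, $k=1,\dots,4$, using \eqref{SDE-eigenvalue}, and symmetrize the singular drift exactly as in the derivation of \eqref{eq-Ito formula}: since $G_N$ is symmetric by \eqref{eq-Gn}, the term $\tfrac1N\sum_i (\lambda_i^{N})^{2k-1}\sum_{j\ne i}\frac{G_N(\lambda_i^{N},\lambda_j^{N})}{\lambda_i^{N}-\lambda_j^{N}}$ equals $\tfrac12\iint_{x\ne y}\frac{x^{2k-1}-y^{2k-1}}{x-y}\big(NG_N(x,y)\big)L_N(dx)L_N(dy)$, whose integrand is a polynomial of controlled degree bounded by the growth hypothesis $g_N^{2}+h_N^{2}\le K(1+|x|)$ (hence $G_N(x,y)\le 2K^{2}(1+|x|)(1+|y|)$). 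Combining this with the $b_N$-drift (controlled by $|b_N(x)|\le KN(1+|x|)$ against the $\tfrac1N$ prefactor) and the It\^o correction coming from the martingale part, and treating the martingale part itself by Doob's and BDG's inequalities as in \eqref{eq-1.19-compact 2}, one arrives at a Gronwall inequality for $t\mapsto\sup_{s\le t}\bE\big[\langle x^{2k},L_N(s)\rangle\big]$ whose right-hand side involves only moments of order $<2k$. Starting from $\sup_N\int x^{8}\,L_N(0)(dx)<\infty$ and inducting on $k$ closes the loop and produces the bound in (a); order $8$ is exactly what is needed so that the Cauchy--Schwarz step in (b) squares only moments up to the fourth.

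For (b), decompose $\langle f,L_N(t)\rangle-\langle f,L_N(s)\rangle$ by the It\^o identity (the analogue of \eqref{eq-Ito formula} for \eqref{SDE-eigenvalue}) into a local-martingale part $M^{f}_{s,t}=\tfrac1N\sum_i\int_s^t 2(g_Nh_N)(\lambda_i^{N})f'(\lambda_i^{N})\,dW_i$ and a bounded-variation part. Using $\|f'\|_{L^\infty},\|f''\|_{L^\infty}<\infty$, the bounded-variation part is in absolute value at most a constant times $|t-s|\,\big(1+\sup_{r\le T}\langle x^{2},L_N(r)\rangle\big)$, whose $L^{4}$ norm is $\le C_{f,T}|t-s|$ by step~(a); this is the analogue of \eqref{eq-1.15-Holder estimate}, replacing the pathwise-H\"older estimate \eqref{eq-1.30-pathwise holder} for the present multiplicative-noise model. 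For the martingale part, BDG gives $\bE\big[|M^{f}_{s,t}|^{4}\big]\le C\,\bE\big[\langle M^{f}\rangle_{s,t}^{2}\big]$ with $\langle M^{f}\rangle_{s,t}=\tfrac{4}{N^{2}}\sum_i\int_s^t (g_Nh_N)^{2}(\lambda_i^{N})(f')^{2}(\lambda_i^{N})\,dr\le \tfrac{C_f}{N}\int_s^t\big(1+\langle x^{2},L_N(r)\rangle\big)\,dr$, so that $\bE\big[\langle M^{f}\rangle_{s,t}^{2}\big]\le C_{f,T}N^{-2}|t-s|^{2}$ again by Cauchy--Schwarz and step~(a). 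Adding the two contributions gives \eqref{eq-tightness moment} with $a=3$, $b=1$, uniformly in $N$, as required.

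Finally, for the identification of the limit, take a subsequence along which $L_{N}$ converges in law to some $\mu\in C([0,T],\cP(\bR))$ and pass to the limit in the It\^o identity for $\langle f,L_{N}(t)\rangle$, $f\in C_b^{2}(\bR)$. The martingale term vanishes in $L^{2}$ because its bracket is $O(1/N)$; the $b_N$-drift converges to $\int_0^t\langle bf',\mu_s\rangle\,ds$ and the symmetrized singular drift to $\tfrac{\beta}{2}\int_0^t\iint_{\bR^{2}}\frac{f'(x)-f'(y)}{x-y}G(x,y)\,\mu_s(dx)\mu_s(dy)\,ds$, using the locally uniform convergences $g_N^{2}\to g^{2}$, $h_N^{2}\to h^{2}$, $b_N/N\to b$ together with the uniform moment bounds of step~(a); for the quadratic term one truncates the unbounded integrand $\frac{f'(x)-f'(y)}{x-y}G(x,y)$ and controls the error by the $8$th-moment bound uniformly along the subsequence (using Skorokhod representation to replace convergence in law by almost-sure convergence). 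The factor $\beta$ equals $1$ in the real case \eqref{SDE-eigenvalue} --- it is the $\tfrac12$ produced by the symmetrization --- and $2$ in the complex case \eqref{SDE eigenvalue complex}, where the singular sum carries an extra factor $2$. The main obstacle is the bookkeeping in steps~(a)--(b): one must verify that the powers of $N$ built into $g_N$, $h_N$, $b_N$ balance so that the bounded-variation part remains $O(|t-s|)$ and the martingale bracket remains $O(|t-s|/N)$ uniformly in $N$ --- which is precisely the purpose of the growth hypotheses $g_N^{2}+h_N^{2}\le K(1+|x|)$ and $|b_N|\le KN(1+|x|)$ and of the $8$th-moment assumption --- and, relatedly, the uniform control of the kernel $NG_N$ after symmetrization.
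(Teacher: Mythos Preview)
The paper does not prove Theorem~\ref{Thm-Graczyk model weak}; it is quoted from \cite{Malecki-arxiv}, and the only methodological hint is the sentence just before the statement: the argument uses ``the symmetric polynomials and the tightness argument presented in the proof of Theorem~\ref{Thm-Wigner dynamic-weak}.'' Your outline --- controlling the power sums $\langle x^{2k},L_N(t)\rangle$ by It\^o's formula and then obtaining the Kolmogorov-type estimate \eqref{eq-tightness moment} via BDG --- is consistent with that description, since power sums are symmetric polynomials in the eigenvalues and the It\^o+BDG route is exactly the first of the two approaches flagged in the proof of Theorem~\ref{Thm-Wigner dynamic-weak} (the paper displays the Fernique/Hoffman--Wielandt variant only because the entries there are Gaussian, which fails for~\eqref{SDE-matrix}).

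There is, however, a genuine gap in your $N$-bookkeeping. After symmetrization you correctly write the interaction drift as $\tfrac12\iint_{x\ne y}\frac{x^{2k-1}-y^{2k-1}}{x-y}\,\big(NG_N(x,y)\big)L_N(dx)L_N(dy)$, but then invoke $G_N(x,y)\le 2K^{2}(1+|x|)(1+|y|)$ --- this bounds $G_N$, not $NG_N$. Under the stated hypothesis $g_N^{2}+h_N^{2}\le K(1+|x|)$ one only gets $NG_N=O(N)$, and likewise the drift $\langle f'b_N,L_N\rangle$ is $O(N)$ since $|b_N|\le KN(1+|x|)$; there is no ``$\tfrac1N$ prefactor'' left to absorb it once you have passed to the empirical measure. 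Consequently your Gronwall recursion in step~(a) produces moment bounds of order $e^{CNT}$, and the bounded-variation part in step~(b) is $O(N|t-s|)$ rather than $O(|t-s|)$, so neither estimate is uniform in $N$. This is not a cosmetic slip: compare with Theorem~\ref{Thm-Graczyk matrix}, whose convergence assumptions are $NG_N\to G$ and $b_N\to b$, against the present $G_N\to G$ and $b_N/N\to b$ --- a genuinely different scaling regime. Your closing paragraph acknowledges the balancing as ``the main obstacle'' and asserts the growth hypotheses resolve it; they do not, and whatever device \cite{Malecki-arxiv} uses to close this (a time or particle rescaling, or a finer use of the symmetric-polynomial structure) is not visible from the survey and would have to be extracted from that paper directly.
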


\begin{remark}
	It is worth pointing out that the almost sure compactness obtained in Theorem \ref{Thm-Graczyk matrix} is stronger than the tightness established in Theorem \ref{Thm-Graczyk model weak}. However, in comparison with Theorem \ref{Thm-Graczyk matrix}, Theorem \ref{Thm-Graczyk model weak} does not require the non-colliding property of the strong solution to \eqref{SDE-eigenvalue}. Hence, Theorem \ref{Thm-Graczyk model weak} is applicable to the $\beta$ version of Dyson Brownian motion \eqref{eq-Dyson BM SDE} with $\beta \in (0, \infty)$, while Theorem \ref{Thm-Graczyk matrix} is only valid for $\beta \in [1, \infty)$.
\end{remark}

The system \eqref{SDE-eigenvalue} for eigenvalues of matrix-valued process \eqref{SDE-matrix} was further generalized in \cite{Graczyk2014} to the following particle system: for  $1 \le i \le N$,
\begin{align} \label{SDE-particle}
	\begin{cases}
		& dx_i^N(t) = \sigma_i^N(x_i^N(t)) dW_i(t)+ \left( b_i^N(x_i^N(t)) + \sum_{j:j\neq i} \dfrac{H_{ij}^N(x_i^N(t), x_j^N(t))}{x_i^N(t) - x_j^N(t)} \right) dt, \\
		& x_1(t) \le \cdots \le x_N(t), ~t \ge 0,
	\end{cases}
\end{align}
where $\{W_i(t)\}_{1 \le i \le N}$ is a family of independent Brownian motions.

\begin{remark}
	\begin{itemize}
		\item If we take $\sigma_i^N(x) = \sigma_N(x), b_i^N(x) = b_N(x)$ and $H_{ij}^N(x,y) = \gamma_N$, then the particle system \eqref{SDE-particle} reduces to the system \eqref{eq-SDE particle Cepa97}.
		
		\item If we take $\sigma_i^N(x) = 2x, b_i^N(x) = (N+1)x$ and $H_{ij}^N(x,y) = 2xy$, then the particle system \eqref{SDE-particle} reduces to the system \eqref{eq-eigenvalue SDE ellipsoids}. To see this, we apply It\^{o}'s formula to \eqref{eq-eigenvalue SDE ellipsoids} to obtain
		  \begin{align*}
		 &   d \lambda_i^N(t) = d \left( e^{\ln \lambda_i^N(t)} \right)
			= e^{\ln \lambda_i^N(t)} d \left( \ln \lambda_i^N(t) \right)
			+ \dfrac{1}{2} e^{\ln \lambda_i^N(t)} d \langle \ln \lambda_i^N(t) \rangle \\
			=~& 2\lambda_i^N(t) dW_i(t) + \lambda_i^N(t) \sum_{j:j\neq i} \dfrac{\lambda_i^N(t) + \lambda_j^N(t)}{\lambda_i^N(t) - \lambda_j^N(t)} dt + 2 \lambda_i^N(t) dt 
			= 2\lambda_i^N(t) dW_i(t) + \left( (N+1) \lambda_i^N(t) + \sum_{j:j\neq i} \dfrac{2\lambda_i^N(t) \lambda_j^N(t)}{\lambda_i^N(t) - \lambda_j^N(t)} \right) dt.
		\end{align*}
	\end{itemize}
\end{remark}

The following theorem guarantees the existence and uniqueness of the strong non-exploding and non-colliding solution to \eqref{SDE-particle}

\begin{theorem}[\cite{Graczyk2014}, Theorem 2.2] \label{Thm-Graczyk particle}
	Consider the system \eqref{SDE-particle} with initial condition $x_1(0) \le \cdots \le x_N(0)$. Assume the following conditions hold:
	\begin{enumerate}
		\item The coefficient functions $\sigma_i^N(x)$, $b_i^N(x)$ are continuous for $1 \le i \le N$ while $H_{ij}^N(x,y)$ is non-negative, continuous and satisfies the symmetric condition $H_{ij}^N(x,y) = H_{ji}^N(y,x)$ for $1\le i \neq j \le N$.
		\item There exists a function $\rho: \bR_+ \to \bR_+$ satisfying $\int_{0+} \rho^{-1}(x) dx = \infty$, such that for $1 \le i \le N$,
		\begin{align*}
			\left| \sigma_i^N(x) - \sigma_i^N(y) \right|^2 \le \rho(|x-y|), ~ \forall x, y \in \bR.
		\end{align*}
	
		\item There exists a positive constant $C$ that may depends on $N$, such that for all $1 \le i \neq j \le N$,
          \[
			\sigma_i^N(x)^2 + x b_i^N(x) \le C(1+x^2),
			~ \forall x \in \bR; \qquad
			H_{ij}^N(x,y) \le C(1+|xy|),
			~ \forall x, y \in \bR.
            \]
	
		\item For $1 \le i \neq j \le N$,
		\begin{align*}
			\dfrac{H_{ij}^N(w,z)}{z-w}
			\le \dfrac{H_{ij}^N(x,y)}{y-x},
			~ \forall w < x < y < z.
		\end{align*}
	
		\item There exists a positive constant $C$ that may depends on $N$, such that for all $1 \le i \neq j \le N$,
		\begin{align*}
			\sigma_i^N(x)^2 + \sigma_j^N(y)^2
			\le C(x-y)^2 + 4 H_{ij}^N(x,y),
			~ \forall x,y \in \bR.
		\end{align*}
	
		\item There exists a positive constant $C$ that may depends on $N$, such that for all $1 \le i < j < k \le N$, for all $x<y<z$,
		\begin{align*}
			(y-x) H_{ij}^N(x,y) + (z-y) H_{jk}^N(y,z)
			\le C(z-y)(z-x)(y-x) + (z-x) H_{ik}^N(x,z).
		\end{align*}
	
		\item For $1 \le k < l \le N$, the set
          $\displaystyle
		  G_{kl} = \bigcap_{k<i<j<l} \left\{ x \in \bR: \sigma_i^N(x)^2 + \sigma_j^N(x)^2 + H_{ij}^N(x,x) = 0 \right\}
          $
		consists of isolated points and for every $x \in G_{kl}$,
		\begin{align*}
			\sum_{i=k}^l \left( b_i^N(x) + \sum_{j=1}^{N-2} \dfrac{H_{ij}^N(x,y_j)}{x-y_j} \text{1}_{\bR \setminus \{x\}}(y_j) \right) \neq 0,
			~ \forall y_1, \ldots, y_{N-2} \in \bR.
		\end{align*}
	
		\item The function $b_i^N(x)$ is Lipschitz continuous or non-increasing for $1 \le i \le N$. Moreover, for $1 \le i < j \le N$, for all $x \in \bR$, $b_i^N(x) \le b_j^N(x)$.
	\end{enumerate}

	Then there exists a unique strong non-exploding solution of \eqref{SDE-particle}, such that the first collision time
	\begin{align*}
		\tau_N = \inf\{t>0: \exists \ i \neq j, ~x_i^N(t) = x_j^N(t) \}
	\end{align*}
	is infinite almost surely.
\end{theorem}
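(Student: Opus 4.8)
The plan is to follow the three-part strategy of Theorem~\ref{Thm-Dyson SDE}: first build a pathwise-unique strong solution up to the first collision time $\tau_N$ and the explosion time $\zeta_N$ by regularizing the singular interaction, then rule out explosion, then rule out collision via McKean's argument (Lemma~\ref{Lemma-McKean argument}). Conditions~1--8 are organized so that each step gets exactly the input it needs; I would treat the case of a strictly ordered initial configuration $x_1(0)<\cdots<x_N(0)$, the coincident case reducing to it by approximation as for Theorem~\ref{Thm-Dyson SDE} (where one additionally shows instantaneous separation).

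For the construction, replace each singular kernel $H_{ij}^N(x,y)/(x-y)$ by $H_{ij}^N(x,y)\,\psi_R(x-y)$ with $\psi_R$ as in \eqref{eq-def-psi}, which is bounded, continuous, sign-preserving ($u\psi_R(u)\ge0$), and agrees with $1/(x-y)$ when $|x-y|\ge R^{-1}$. By Condition~1 and the linear-growth part of Condition~3 the regularized system has continuous, at most linearly growing coefficients, so weak existence holds; by Condition~2 the diffusion coefficients $\sigma_i^N$, each depending only on its own coordinate, satisfy a Yamada--Watanabe modulus-of-continuity bound, and together with the monotone structure of the drift supplied by Conditions~4 and~8 ($b_i^N$ Lipschitz or non-increasing, ordered $b_i^N\le b_j^N$, and $(x,y)\mapsto H_{ij}^N(x,y)/(y-x)$ monotone) a Yamada--Watanabe-type theorem for diagonal-noise systems yields pathwise uniqueness. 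The solutions for different $R$ agree until the least pairwise distance reaches $R^{-1}$; letting $R\to\infty$ and patching produces a pathwise-unique strong solution of \eqref{SDE-particle} on $[0,\tau_N\wedge\zeta_N)$ on which, exactly as in Theorem~\ref{Thm-Dyson SDE}, the labels stay strictly ordered since no two particles meet before $\tau_N$.

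To exclude explosion, apply It\^{o}'s formula to $V(t)=\sum_{i=1}^N\bigl(1+|x_i^N(t)|^2\bigr)$. Besides a local martingale, the drift contains $\sum_i 2x_i^N\sum_{j\ne i}H_{ij}^N/(x_i^N-x_j^N)$, which by the symmetry $H_{ij}^N(x,y)=H_{ji}^N(y,x)$ in Condition~1 telescopes to $2\sum_{i<j}H_{ij}^N$; using $H_{ij}^N(x,y)\le C(1+|xy|)$ and $\sigma_i^N(x)^2+xb_i^N(x)\le C(1+x^2)$ from Condition~3, the whole drift is $\le C'(1+V)$, so localization and Gr\"onwall's lemma give $\zeta_N=\infty$ almost surely. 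For non-collision, set $Z(t)=\prod_{i<j}\bigl(x_j^N(t)-x_i^N(t)\bigr)$ and $h(z)=\ln z$, so that $h(Z(t))=\sum_{i<j}\ln\bigl(x_j^N(t)-x_i^N(t)\bigr)=h(Z(0))+M(t)+P(t)$ by It\^{o}'s formula, with $M$ a continuous local martingale. The finite-variation part $P$ splits into a two-body part $\sum_{i<j}\bigl(2H_{ij}^N-\tfrac12(\sigma_i^N)^2-\tfrac12(\sigma_j^N)^2\bigr)/(x_j^N-x_i^N)^2$ and a three-body part over triples. Condition~5 forces the numerator of the two-body part to be at least $-\tfrac C2(x_i^N-x_j^N)^2$, so that part is bounded below by a constant; Condition~6 is precisely the algebraic inequality that bounds the three-body part below on the ordered chamber, replacing the exact cancellation available in Theorem~\ref{Thm-Dyson SDE}. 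Since $h(z)\to-\infty$ as $z\downarrow0$, Lemma~\ref{Lemma-McKean argument} then gives $\tau_N=\infty$ almost surely.

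The main obstacle is the non-collision step. First, extracting a lower bound for the three-body part of $P$ from Condition~6 is delicate: unlike the Dyson case the triple sum does not vanish identically, so one must group the three pairings inside each triple under the constraint $x_a^N<x_b^N<x_c^N$ into exactly the form of Condition~6, with Condition~4 absorbing the residual monotone terms. Second, the functional $Z$ degenerates at configurations where $(\sigma_i^N)^2$, $(\sigma_{i+1}^N)^2$ and $H_{i,i+1}^N$ vanish simultaneously; Condition~7 guarantees these configurations form an isolated set on which the net drift separating the two neighbouring particles is nonzero, so a local comparison argument near such points shows the particles cannot become stuck there, which is what makes the application of Lemma~\ref{Lemma-McKean argument} on all of $[0,\tau_N)$ legitimate.
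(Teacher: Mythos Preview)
The paper does not contain a proof of this theorem: it is a survey, and Theorem~\ref{Thm-Graczyk particle} is simply quoted from \cite{Graczyk2014} with no argument reproduced. There is therefore no ``paper's own proof'' to compare your proposal against. Your three-step scheme (regularize to construct a strong solution up to $\tau_N\wedge\zeta_N$, rule out explosion via a Lyapunov function and Gr\"onwall, rule out collision via McKean's argument on $U(t)=\sum_{i<j}\ln(x_j^N-x_i^N)$) is exactly the template the survey uses for the simpler models, and your identification of which condition feeds which step is accurate; if you want a detailed check you would have to consult \cite{Graczyk2014} directly.
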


\begin{remark}
	The initial values for the particles in Theorem \ref{Thm-Graczyk particle} are allowed to collide.
\end{remark}

Let $L_N(t)$ be the empirical measure process of the particles $\{x_i^N(t)\}_ {1\le i\le N}$ given in \eqref{SDE-particle}, that is 
\begin{align*}
	L_N(t) (dx) = \dfrac{1}{N} \sum_{i=1}^N \delta_{x_i^N(t)} (dx).
\end{align*}
The convergence of the sequence $\{L_N(t), t\in[0,T]\}_{N\in \mathbb N}$ for $T>0$ was studied in \cite{Song2019} for the case that the family of functions $\{b_i^N(x)\}_{1 \le i \le N}$ and $\{H_{ij}^N(x,y)\}_{1 \le i \neq j \le N}$ are identical respectively. For simplicity, we assume $b_i^N(x) = b_N(x)$, $\sigma_i^N(x) = \sigma_N(x)$ for all $1 \le i \le N$ and $H_{ij}^N(x,y) = H_N(x,y)$ for $1 \le i \neq j \le N$, and then the particle system \eqref{SDE-particle} becomes
\begin{align} \label{SDE-particle'}
	dx_i^N(t) = \sigma^N(x_i^N(t)) dW_i(t)+ \left( b_N(x_i^N(t)) + \sum_{j:j\neq i} \dfrac{H_N(x_i^N(t), x_j^N(t))}{x_i^N(t) - x_j^N(t)} \right) dt, \ t \ge 0,
\end{align}
for $1 \le i \le N$.

\begin{theorem}[\cite{Song2019}, Theorems 3.1 and 3.2] \label{Thm-LSD-Graczyk particle}
	Let $T>0$ be a fixed number. Suppose that \eqref{SDE-particle'} has a strong solution that is non-exploding and non-colliding for $t \in [0,T]$. Assume the following conditions hold:
	\begin{enumerate}
		\item There exists a positive function $\varphi(x) \in C^2(\mathbb{R})$ such that
		  $\lim\limits_{|x|\rightarrow +\infty} \varphi(x) = +\infty$,
          $\varphi'(x)b_N(x)$ and $\varphi''(x) \sigma^N(x)^2$ are
          bounded with respect to $(x,N)$, and $\varphi'(x)    \sigma^N(x)$ satisfies,
          for some positive integer $l_1$.
		  \begin{align*}
			\sum_{N=1}^{\infty} \left( \dfrac{\|\varphi' \sigma^N\|_{L^{\infty}(dx)}^2}{N} \right)^{l_1} < \infty.
		\end{align*}
		
		\item The function
		$N H_N(x,y) \dfrac{\varphi'(x) - \varphi'(y)}{x-y}$
		is bounded with respect to $(x,y, N)$. 
		
		\item The empirical measure $L_N(0)$ converges weakly to a measure $\mu_0$ as $N$ goes to infinity almost surely, and
		\begin{align*}
			C_0 = \sup_{N>0} \langle \varphi, L_N(0) \rangle
			= \sup_{N>0} \dfrac{1}{N} \sum_{i=1}^N \varphi \left( x_i^N(0) \right) < \infty.
		\end{align*}
		
		\item There exists a sequence $\{\tilde{f}_k\}_{k\in \mathbb
          N}$ of $C^2(\bR)$  functions such that it is dense in the
          space $C_0(\mathbb{R})$ of continuous functions vanishing at
          infinity and  that $\tilde{f}_k'(x) \sigma^N(x)$ satisfies, 		for some positive integer $l_2 \ge 2$,
		\begin{align*}
			\psi(k) = \sum_{N=1}^{\infty} \left( \dfrac{\|\tilde{f}_k' \sigma^N\|_{L^{\infty}(dx)}^2}{N} \right)^{l_2} < \infty.
		\end{align*}
		
		\item There exist continuous functions $b(x)$, $\sigma(x)$ and $H(x,y)$, such that $b_N(x)$ converges to $b(x)$, $\sigma^N(x)$ converges to $\sigma(x)$ and $N H_N(x,y)$ converges to $H(x,y)$ uniformly as $N$ tends to infinity.
	\end{enumerate}
	
	Then the sequence $\{L_N(t), t\in[0,T]\}_{N\in \mathbb N}$ is
    relatively compact in  $C([0,T], M_1(\mathbb{R}))$, i.e., every
    subsequence has a further subsequence that converges in
    $C([0,T], M_1(\mathbb{R}))$ almost surely. Furthermore, any limit
    measure $\mu$ in \linebreak $C([0,T], M_1(\mathbb{R}))$ satisfies the equation
	\begin{align} \label{eq-limit equation Graczyk particle}
		\langle f, \mu_t \rangle
		=& \langle f, \mu_0 \rangle + \int_{0}^t \langle f'b, \mu_s \rangle ds
		+ \dfrac{1}{2} \int_0^t \langle f'' \sigma^2, \mu_s \rangle ds 
		+ \dfrac{1}{2} \int_{0}^t \left[ \iint_{\bR^2} \dfrac{f'(x) - f'(y)}{x-y} H(x,y) \mu_s(dx) \mu_s(dy) \right] ds,
	\end{align}
	for all $f \in C_b^2(\bR)$ such that $f'(x)b(x)$, $f'(x)
    \sigma(x)$, $f''(x) (\sigma(x))^2$ and $\frac{f'(x)-f'(y)}{x-y}
    H(x,y)$ are bounded as well as \linebreak  $\| f''(\cdot)(\sigma^N(\cdot))^2 - f''(\cdot)(\sigma(\cdot))^2 \|_{L^{\infty}} \to 0$ as $N \to \infty$.
\end{theorem}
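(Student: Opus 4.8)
The plan is to follow the template of the proofs of Theorems~\ref{Thm-Wigner dynamic-strong} and \ref{Thm-Graczyk matrix}, in three steps: an It\^o computation for $\langle f, L_N(t)\rangle$, an almost-sure relative compactness argument, and a term-by-term passage to the limit along convergent subsequences.

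\textbf{Step 1 (It\^o decomposition).} Apply It\^o's formula to $f(x_i^N(t))$ using \eqref{SDE-particle'}, sum over $i$ and divide by $N$. Symmetrising the interaction sum through $H_N(x,y)=H_N(y,x)$ and using the convention $\frac{f'(x)-f'(y)}{x-y}=f''(x)$ on the diagonal, one gets, for $f\in C^2(\bR)$,
\begin{align*}
	\langle f, L_N(t)\rangle
	=& \langle f, L_N(0)\rangle + M_N^f(t) + \int_0^t \langle f'b_N, L_N(s)\rangle\,ds + \dfrac12\int_0^t \langle f''(\sigma^N)^2, L_N(s)\rangle\,ds \\
	& + \dfrac12\int_0^t \iint_{\bR^2} \dfrac{f'(x)-f'(y)}{x-y}\, NH_N(x,y)\, L_N(s)(dx)L_N(s)(dy)\,ds + \varepsilon_N^f(t),
\end{align*}
where $M_N^f(t)=\frac1N\sum_{i=1}^N\int_0^t f'(x_i^N(s))\sigma^N(x_i^N(s))\,dW_i(s)$ is a continuous martingale with $\langle M_N^f\rangle_t\le t\,\|f'\sigma^N\|_{L^\infty}^2/N$, and $\varepsilon_N^f(t)=-\frac1{2N}\int_0^t\langle f''(x)NH_N(x,x),L_N(s)\rangle\,ds$ is $O(1/N)$ because $NH_N\to H$ with $H$ bounded. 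This is the analogue of \eqref{eq-Ito formula}; the genuinely new feature is that $\frac12\int_0^t\langle f''(\sigma^N)^2,L_N(s)\rangle\,ds$ no longer vanishes as $N\to\infty$.

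\textbf{Step 2 (almost sure relative compactness).} As for \eqref{eq-1.15-Holder estimate}, the decomposition together with the bounds $|\frac{f'(x)-f'(y)}{x-y}|\le\|f''\|_{L^\infty}$, $\varphi'b_N$ bounded, $\varphi''(\sigma^N)^2$ bounded, and $NH_N(x,y)\frac{\varphi'(x)-\varphi'(y)}{x-y}$ bounded gives a H\"older-in-$t$ estimate for $t\mapsto\langle\tilde f_k,L_N(t)\rangle$ modulo the martingale increment. Bounding a $2l_2$-th moment of that increment by Burkholder--Davis--Gundy, applying Markov's inequality, and summing over $N$ using the series $\psi(k)<\infty$ of condition~(4), Borel--Cantelli yields almost-sure equicontinuity of $t\mapsto\langle\tilde f_k,L_N(t)\rangle$ for every $k$, exactly as in \eqref{eq-1.16-martingale term}--\eqref{eq-1.18-compact 1}. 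Taking $f=\varphi$ with $s=0$ and controlling $\sup_{t\le T}|M_N^\varphi(t)|$ via BDG together with the $l_1$-series in condition~(1) and the bound $C_0$ of condition~(3) shows, again by Borel--Cantelli, that $L_N(t)$ stays almost surely, for all $t\in[0,T]$, inside a fixed tight set $K(\varphi)\subset\cP(\bR)$, as in \eqref{eq-1.19-compact 2}. Feeding these into the compactness criterion \cite[Lemma 4.3.13]{Anderson2010} gives the almost-sure relative compactness of $\{L_N\}_{N\in\bN}$ in $C([0,T],M_1(\bR))$.

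\textbf{Step 3 (identification of the limit and main difficulty).} Let $L_{N_k}\to\mu$ in $C([0,T],M_1(\bR))$ along a subsequence and pass to the limit in Step~1 term by term: $\langle f,L_{N_k}(0)\rangle\to\langle f,\mu_0\rangle$ by condition~(3); $M_{N_k}^f(t)\to0$ uniformly in $t$ almost surely by BDG, Markov and Borel--Cantelli (using $N_k\ge k$ and $\langle M_{N_k}^f\rangle_T\le T\|f'\sigma^{N_k}\|_{L^\infty}^2/N_k$), as in \eqref{eq-1.20-martingale->0}; $\varepsilon_{N_k}^f(t)\to0$; and the three integral terms converge to those of \eqref{eq-limit equation Graczyk particle} using the uniform convergences $b_N\to b$, $\sigma^N\to\sigma$, $NH_N\to H$ of condition~(5), the weak convergences $L_{N_k}(s)\to\mu_s$ and $L_{N_k}(s)\otimes L_{N_k}(s)\to\mu_s\otimes\mu_s$, the boundedness hypotheses on $f$, and dominated convergence in $s$. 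For the diffusion term one invokes the hypothesis $\|f''(\cdot)(\sigma^N(\cdot))^2-f''(\cdot)(\sigma(\cdot))^2\|_{L^\infty}\to0$, while for the interaction term one uses that $(x,y)\mapsto\frac{f'(x)-f'(y)}{x-y}H(x,y)$ is bounded and continuous on $\bR^2$ (continuous across the diagonal thanks to the convention), so that weak convergence of the product measures applies. The bulk of the work is Step~2: extracting from conditions~(1) and~(4) a modulus-of-continuity control for $t\mapsto\langle\tilde f_k,L_N(t)\rangle$ uniform enough in $N$ to survive Borel--Cantelli while simultaneously confining $L_N(t)$ to a fixed tight set uniformly in $t$ and almost surely in $N$; Step~3 is essentially routine once the uniform convergence of the coefficients is available, the only new point being the non-vanishing $f''(\sigma^N)^2$ term, which is precisely what the extra hypothesis on $f$ in the statement handles.
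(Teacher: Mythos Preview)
The paper does not give its own proof of this theorem; it is stated with a citation to \cite{Song2019} and the surrounding text makes clear that the intended argument is the compactness scheme detailed in the proof of Theorem~\ref{Thm-Wigner dynamic-strong} (and reused for Theorem~\ref{Thm-Graczyk matrix}). Your proposal follows exactly that template---It\^o decomposition, BDG plus Borel--Cantelli for almost-sure equicontinuity and confinement to a tight set $K(\varphi)$, then passage to the limit along convergent subsequences---so it matches the approach the paper points to.

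One point of presentation: in Step~2 you invoke the bounds on $\varphi' b_N$, $\varphi''(\sigma^N)^2$ and $NH_N(x,y)\frac{\varphi'(x)-\varphi'(y)}{x-y}$ while speaking about the H\"older estimate for $t\mapsto\langle\tilde f_k,L_N(t)\rangle$. Those $\varphi$-bounds are what controls the confinement $L_N(t)\in K(\varphi)$; the equicontinuity of $\langle\tilde f_k,L_N(\cdot)\rangle$ needs the corresponding bounds with $\tilde f_k$ in place of $\varphi$, which are not among the stated hypotheses (only the martingale series $\psi(k)$ in condition~(4) is). In \cite{Song2019} this is handled, but as written your Step~2 conflates the two roles. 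It does not affect the overall correctness of the strategy, but you should separate cleanly which hypotheses feed the confinement and which feed the equicontinuity.
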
	

\begin{remark}
	The equation \eqref{eq-limit equation Graczyk particle} for limit measures with the test function $f = (z-x)^{-1}$ for $z \in \bC_+$ was derived in \cite{Song2019}. Indeed, the computation there is valid for all $f \in C_b^2(\bR)$.
\end{remark}

We would like to point out that the conditions for the uniqueness of the solution to \eqref{eq-limit equation Graczyk matrix} and \eqref{eq-limit equation Graczyk particle} are still unknown. The fluctuations of the sequence $\{L_N(t), t\in[0,T]\}_{N\in \mathbb N}$ around its limits was studied in \cite{Song19}. We refer the CLT in \cite[Theorem 2.1]{Song19} for details.

There is a huge literature on related interacting particle systems, particularly on those related to Bessel processes. For more details, we refer to the survey papers \cite{SurveyBessel03, Zambotti17} and the recent book \cite{Katori16}.

\section{Matrix-valued stochastic processes driven by fractional Brownian motion}
\label{Sec:fBm}

A common feature of the matrix-valued stochastic processes discussed so far is that they are all driven by independent Brownian motions. In contrast, the study of matrix-valued SDEs driven by fractional Brownian motions has a shorter history and is relatively limited.

Recall that a centred Gaussian process $B = \{B(t), t \ge 0\}$ is called fractional Brownian motion with Hurst parameter $H \in (0,1)$ if it has the covariance function
\begin{align*}
	\bE[B(t)B(s)] = \dfrac{1}{2} \big( s^{2H} + t^{2H} - |t-s|^{2H} \big).
\end{align*}
We refer the reader to \cite{Nualart2006} for more details.

To our best knowledge, the first paper in this area is \cite{Nualart20144266}, where the real symmetric matrix fractional Brownian motion was introduced and studied.

\begin{definition} \label{Def-matrix fBm}
  Let $\{B_{i,j}(t), 1 \le i \le j \le N\}$ be a family of i.i.d. fractional Brownian motions with Hurst parameter $H \in (0,1)$. Let $H^N(t) = \left( H_{k,l}^N(t) \right)_{1 \le k \le l \le N}$ be a real symmetric $N \times N$ matrix-valued process whose entries are
\[
H_{k,l}^N(t) =
B_{k,l}(t)  {\large 1}_{\{ k < l\}}
+		\sqrt{2} B_{l,l}(t)  {\large 1}_{\{ k = l\}}.
\]
Then $B(t)$ is called the real symmetric matrix fractional Brownian motion with Hurst parameter $H$.
\end{definition}

It is natural to consider the eigenvalue processes as we have done for the matrix Brownian motion. For $1 \le i \le N$, denote by $\varPhi_i$ the function that maps a $N \times N$ real symmetric matrix to its $i$-th largest eigenvalue. The following results can be found in \cite{Nualart20144266}.

\begin{theorem}[\cite{Nualart20144266}, Theorems 4.1 and  5.2] \label{Thm-eigenvalue SDE matrix fBm}
	Let $X^N(0)$ be a real symmetric $N \times N$ deterministic matrix and let $X^N(t) = X^N(0) + H^N(t)$, where $H^N(t)$ is defined in Definition \ref{Def-matrix fBm} with Hurst parameter $H \in (1/2,1)$. Let $\lambda_1^N(t) \ge \lambda_2^N(t) \ge \cdots \ge \lambda_N^N(t)$ be the ordered eigenvalue processes of $X^N(t)$. Denote the first collision time of the eigenvalue processes by
	\begin{align*}
		\tau_N = \inf \left\{ t>0: \exists \ i \neq j, ~\lambda_i^N(t) = \lambda_j^N(t) \right\}.
	\end{align*}
	Then
    $		\bP \left( \tau_N = + \infty \right) = 1$.
	Furthermore, the ordered eigenvalue processes $\lambda_1^N(t), \cdots, \lambda_N^N(t)$ satisfy
	\begin{align} \label{eq-SDE-fBm eigenvalue}
		\lambda_i^N(t)
		= \lambda_i^N(0)
		+ \left( \sum_{k \le l} \int_0^t \dfrac{\partial \varPhi_i(X^N(s))}{\partial B_{k,h}} \delta B_{k,h}(s) \right)
		+ 2H \sum_{j:j\neq i} \int_0^t \dfrac{s^{2H-1}}{\lambda_i^N(s) - \lambda_j^N(s)} ds.
	\end{align}
\end{theorem}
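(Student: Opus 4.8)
The plan is to mirror the two-stage proof of Theorem~\ref{Thm-Dyson SDE}: first derive \eqref{eq-SDE-fBm eigenvalue} by an It\^o-type change of variables valid before the first collision time, then show that collisions never occur. The essential simplification available for $H>1/2$ is that each $B_{k,l}$ has finite $p$-variation for some $p<2$ and hence \emph{vanishing quadratic variation}, so the pathwise (Young) change-of-variables formula holds with no second-order It\^o correction; any drift in the eigenvalue dynamics can then only come from re-expressing the resulting pathwise integral as a divergence (Skorohod) integral $\delta B$, which on fractional Brownian motion carries a Malliavin-trace correction weighted by $H\,s^{2H-1}$ (see \cite{Nualart2006}). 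Throughout I would work on the stochastic interval $[0,\tau_N)$, where the ordered eigenvalues are simple and each $\varPhi_i$ is real-analytic in a neighbourhood of $X^N(s)$, localising at the stopping times $T_m=\inf\{t:\min_{i\neq j}|\lambda_i^N(t)-\lambda_j^N(t)|\le 1/m\}$ to justify every manipulation, and passing to $m\to\infty$ once non-collision is established.

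\textbf{Perturbation identities and the SDE.} Writing $X^N(s)=\sum_j\lambda_j^N(s)\,u_j(s)u_j(s)^{\intercal}$, first- and second-order analytic perturbation theory for a simple eigenvalue give that $\partial\varPhi_i/\partial B_{k,l}$ is a smooth (quadratic-in-$u_i$) function of the entries and that $\partial^2\varPhi_i/\partial B_{k,l}^2$ equals a sum over $j\neq i$ of (a product of coordinates of $u_i,u_j$) divided by $\lambda_i^N-\lambda_j^N$. The decisive computation --- the same cancellation that appears in quadratic-covariation form in \eqref{eq-1.9-Num of 1.8}--\eqref{eq-1.10-finite variation term of 1.6} --- is that, summing over all entry indices $\{(k,l):k\le l\}$ and using $u_i^{\intercal}u_i=1$ and $u_i^{\intercal}u_j=0$ for $i\neq j$, the eigenvector-dependent numerators collapse to leave
\[
\sum_{k\le l}\frac{\partial^2\varPhi_i}{\partial B_{k,l}^2}(X^N(s))=2\sum_{j:j\neq i}\frac{1}{\lambda_i^N(s)-\lambda_j^N(s)} .
\]
Applying the It\^o formula for the divergence integral with respect to fractional Brownian motion with $H>1/2$ componentwise in the independent $B_{k,l}$ --- so that the only trace term is the $H s^{2H-1}$-weighted sum of pure second derivatives --- and substituting this identity yields precisely \eqref{eq-SDE-fBm eigenvalue}. (Equivalently, the Young chain rule gives the same formula with a drift-free pathwise integral, and converting that pathwise integral to $\delta B$ reproduces the same correction term.)

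\textbf{Non-collision.} For $\bP(\tau_N=+\infty)=1$ I would follow McKean's philosophy (Lemma~\ref{Lemma-McKean argument}) with $U(t)=\sum_{i<j}\log(\lambda_i^N(t)-\lambda_j^N(t))$, which tends to $-\infty$ as $t\uparrow\tau_N$ on $\{\tau_N<\infty\}$. Running the chain rule on $U$ (again with no quadratic-variation term) and carrying out the antisymmetrisation of the triple sum exactly as in the proof of Theorem~\ref{Thm-Dyson SDE}, one checks that the $H s^{2H-1}\partial^2$ correction for $U$ cancels identically, so that
\[
U(t)=U(0)+\sum_{k\le l}\int_0^t\Psi_{k,l}(X^N(s))\,\delta B_{k,l}(s),
\]
a drift-free divergence integral, just as in the Brownian case. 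It then remains to rule out that this fluctuation carries $U(t)$ to $-\infty$ in finite time.

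\textbf{Main obstacle.} This last step is the genuine difficulty and is harder than in the Brownian case: the integral $\sum_{k\le l}\int_0^t\Psi_{k,l}(X^N(s))\,\delta B_{k,l}(s)$ is not a local martingale, so the Dubins--Schwarz dichotomy underpinning Lemma~\ref{Lemma-McKean argument} is unavailable, and its integrand $\Psi_{k,l}$ blows up like the inverse minimal gap near a putative collision. I would attack it either (i) by establishing a fractional analogue of McKean's lemma in which the local martingale $M$ is replaced by a Young integral whose behaviour near $\tau_N$ is controlled through a $p$-variation / H\"older estimate for the $(1/2+)$-H\"older paths $B_{k,l}$, combined with the a priori $\gamma$-H\"older regularity of the eigenvalues themselves (Weyl / Hoffman--Wielandt applied to the H\"older path $X^N$); or (ii) by the regularisation scheme of Step~2 of the proof of Theorem~\ref{Thm-Dyson SDE}: truncate the singular drift as in \eqref{eq-def-psi}, solve the resulting fractional SDE (well posed, smooth coefficients), identify it with the true process up to $T_m$, and bound $\sup_{t\le T}U_R(t)^{-}$ uniformly in $R$. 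Once $\bP(\tau_N=+\infty)=1$ holds, the localisation at $T_m$ can be removed and \eqref{eq-SDE-fBm eigenvalue} holds on all of $[0,\infty)$.
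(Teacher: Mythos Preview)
Your derivation of \eqref{eq-SDE-fBm eigenvalue} via the multidimensional It\^o formula for the Skorohod integral, together with the perturbation identity $\sum_{k\le l}\partial^2\varPhi_i/\partial B_{k,l}^2=2\sum_{j\neq i}(\lambda_i-\lambda_j)^{-1}$, is exactly the route the paper describes (it cites \cite[Theorem~3.1]{Nualart20144266} for the It\^o formula and \cite[Theorem~5.2]{Nualart20144266} for the eigenvalue equation). That part is fine.

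The non-collision argument, however, differs from the paper's, and the gap you yourself flag is real. The paper reports that \cite[Theorem~4.1]{Nualart20144266} proves $\bP(\tau_N=+\infty)=1$ not through a McKean-type Lyapunov function but by establishing an upper bound on \emph{negative moments} of the eigenvalue gaps, i.e.\ controlling $\bE\big[(\lambda_i^N(t)-\lambda_j^N(t))^{-p}\big]$ directly via Malliavin/fractional-calculus estimates on the Gaussian matrix $X^N(t)$. This approach works at the level of the \emph{law} of the eigenvalues and never needs the stochastic-integral representation of $U(t)$, so it completely sidesteps the obstacle you identify --- that $\sum_{k\le l}\int_0^t\Psi_{k,l}\,\delta B_{k,l}$ is not a local martingale and the Dubins--Schwarz machinery behind Lemma~\ref{Lemma-McKean argument} is unavailable.

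Your proposed fixes (i) and (ii) are reasonable programmes but neither is close to complete. Route (i) would need a quantitative H\"older/$p$-variation bound on $\int_0^t\Psi_{k,l}\,dB_{k,l}$ that is uniform as the minimal gap shrinks, yet $\Psi_{k,l}$ blows up like the inverse gap, so the Young estimate degenerates exactly at the putative collision; you would have to bootstrap some a~priori integrability of the gap to close the argument, which is essentially what the negative-moment method provides directly. Route (ii) runs into the same issue: the truncated fractional SDE is well posed, but passing $R\to\infty$ with a uniform lower bound on $U_R$ again requires control that you do not yet have. In short, the SDE half of your proposal matches the paper; for non-collision, the negative-moment argument of \cite{Nualart20144266} is the missing idea, and it is what makes the proof go through for $H\in(1/2,1)$.
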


The main tool used in \cite{Nualart20144266} is the fractional calculus and Malliavin calculus, for which we refer to \cite{skm93} and \cite{Nualart2006} respectively. The non-collision property was obtained in \cite[Theorem 4.1]{Nualart20144266} by establishing an upper bound for negative moments of the difference of eigenvalue processes. The equation \eqref{eq-SDE-fBm eigenvalue} for eigenvalue processes was derived in \cite[Theorem 5.2]{Nualart20144266} by employing a multidimensional version of the It\^{o}'s formula for the Skorohod integral (\cite[Theorem 3.1]{Nualart20144266}, see also \cite[Theorem 2]{Pardo2017}).

\begin{remark}
	The complex version of Theorem \ref{Thm-eigenvalue SDE matrix fBm} can be obtained by the same argument.
\end{remark}

\begin{remark}
	Unlike the Brownian motion case ($H=1/2$), the L\'{e}vy characterization theorem for fractional Brownian motion \cite[Theorem 3.1]{Hu2009} is not applicable here, and it is conjectured that the second term in the right hand side of \eqref{eq-SDE-fBm eigenvalue} is even not be Gaussian, see \cite[Remark 5.3]{Nualart20144266}.
\end{remark}

For the sequence of eigenvalue processes $\{\lambda_i^N(t)\}_{1\ le i \le N}$ in Theorem \ref{Thm-eigenvalue SDE matrix fBm}, denote the empirical measure of the normalized eigenvalue processes by
\begin{align*}
	L_N(t)(dx) = \dfrac{1}{N} \sum_{i=1}^N \delta_{\lambda_i^N(t)/\sqrt{N}}(dx).
\end{align*}
For the case $H \in (1/2,1)$, the convergence in probability of the sequence $\{L_N(t)\}_{N \in \bN}$ to the semi-circle law was established in \cite{Pardo2016} under null initial condition $X^N(0) = 0$ by using Malliavin calculus and the tightness argument used in the proof of Theorem \ref{Thm-Wigner dynamic-weak}.

Similar to the free Brownian motion, the non-commutative fractional Brownian motion with Hurst parameter $H \in (0,1)$ was introduced in \cite[Definition 3.1]{Nourdin2014} as a centred semicircular process $\{S^H(t), t \ge 0\}$ with covariance function
\begin{align*}
	\tau \big( S^H(t) S^H(s) \big)
	= \dfrac{1}{2} \big( t^{2H} + s^{2H} - |t-s|^{2H} \big),
\end{align*}
where $\tau$ is the trace on the non-commutative probability space. The semi-circle law $\{\mu_t^H, t \ge 0\}$ established in \cite{Pardo2016} is the law of a non-commutative fractional Brownian motion with Hurst parameter $H$.

The results of \cite{Pardo2016} were extended to normalized real symmetric matrix Gaussian processes with general initial condition in \cite{Jaramillo2019}. In particular, the real symmetric matrix Gaussian processes considered in \cite{Jaramillo2019} include the real symmetric matrix fractional Brownian motion with $H \in (0,1)$.

The almost sure convergence of the sequence of eigenvalue empirical measure valued processes of Wigner-type matrices, whose entries are generated from the solution of $1$-dimensional Stratonovich SDE
\begin{align} \label{eq-Stratonovich SDE}
	dX_t = \sigma(X_t) \circ dB_t^H + b(X_t) dt	, ~ t \ge 0,
\end{align}
to the semi-circle law was established in \cite[Theorem 3.1, 3.2]{Song2020} for $H \in (1/2,1)$ by using fractional calculus and the argument used in the proof of Theorem \ref{Thm-Wigner dynamic-strong}. \cite{Song2020} also studied the convergence of the sequence of eigenvalue empirical measure-valued processes of the complex analogue and the real symmetric matrix with local dependent entries.

For any test function $f \in C^4(\bR)$ whose fourth derivative has polynomial growth, the random fluctuation
\begin{align*}
	\cL_N^\circ(f)(t) = N \big( \langle f,L_N(t) \rangle - \bE \left[ \langle f,L_N(t) \rangle \right] \big).
\end{align*}
of the sequence $\{L_N(t)\}_{N \in \bN}$ of normalized real symmetric matrix Gaussian process around its expectation was studied in \cite[Theorem 2.3]{Jaramillo2020}.

The fractional version of Wishart process was studied in \cite{Pardo2017} for $H \in (1/2,1)$. Let $B(t)$ be a $N \times p$ matrix whose entries are i.i.d. standard real fractional Brownian motion (matrix fractional Brownian motion) with Hurst parameter $H \in (1/2,1)$. Let $X^N(t) = (B(t)+A) (B(t)+A)^\intercal$, where $A$ is a $N \times p$ real deterministic matrix, is the fractional Wishart process. The following result provides are the non-collision property of eigenvalue processes as well as the equations satisfied by the eigenvalue processes.

\begin{theorem}[\cite{Pardo2017}, Theorems 3 and 4] \label{Thm-eigenvalue SDE fractional Wishart}
	 Let $X^N(t)$ be a fractional Wishart process with $H \in (1/2,1)$. Let $\lambda_1^N(t) \ge \lambda_2^N(t) \ge \cdots \ge \lambda_N^N(t)$ be the ordered eigenvalue processes of $X^N(t)$. Denote the first collision time of the eigenvalue processes by
	\begin{align*}
		\tau_N = \inf \left\{ t>0: \exists \ i \neq j, ~\lambda_i^N(t) = \lambda_j^N(t) \right\}.
	\end{align*}
	Then
    $	\bP \left( \tau_N = + \infty \right) = 1$.
	Furthermore, the ordered eigenvalue processes $\lambda_1^N(t), \ldots, \lambda_N^N(t)$ satisfy
    \[
	\lambda_i^N(t)
	= \lambda_i^N(0)
	+ \left( \sum_{k=1}^N \sum_{l=1}^p \int_0^t \dfrac{\partial \varPhi_i(X^N(s))}{\partial B_{k,l}} \delta B_{k,l}(s) \right) 
		+ 2H \int_0^t \left( p + \sum_{j:j\neq i} \dfrac{\lambda_i^N(s) + \lambda_j^N(s)}{\lambda_i^N(s) - \lambda_j^N(s)} \right) s^{2H-1} ds.
        \]
\end{theorem}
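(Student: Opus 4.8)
The plan is to follow the route used for the matrix fractional Brownian motion in Theorem~\ref{Thm-eigenvalue SDE matrix fBm} (i.e.\ \cite{Nualart20144266}), transplanted to the Wishart structure $X^N(t)=(B(t)+A)(B(t)+A)^\intercal$, while recovering the drift exactly as in the Brownian Wishart case of Theorem~\ref{Thm-Wishart SDE}. I would work under the standing assumption that $X^N(0)=AA^\intercal$ has $N$ distinct eigenvalues (which, as noted after Theorem~\ref{Thm-Wishart SDE}, forces $p\ge N-1$), a degenerate initial value being treated separately as for Dyson's Brownian motion. First I would set up the Malliavin-calculus framework of \cite{Nualart2006}: each entry $X_{k,l}^N(t)=\sum_{m=1}^p(B_{k,m}(t)+A_{k,m})(B_{l,m}(t)+A_{l,m})$ is a polynomial in the Gaussian family $\{B_{k,m}\}$, hence smooth in the Malliavin sense; and since $H>1/2$ one has at one's disposal the multidimensional It\^o formula for the Skorohod integral, \cite[Theorem~3.1]{Nualart20144266} (see also \cite[Theorem~2]{Pardo2017}). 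Because the $Np$ driving processes $B_{k,m}$ are independent, the only second-order contributions are the diagonal ones, each weighted by $\tfrac12\tfrac{d}{ds}\bE[B_{k,m}(s)^2]=Hs^{2H-1}$.

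Next I would prove the non-collision statement, which I expect to be the main obstacle. The McKean-type argument used at $H=1/2$ (Lemma~\ref{Lemma-McKean argument}) is unavailable here, since the eigenvalue processes are not semimartingales and there is no martingale to time-change. Instead, following \cite[Theorem~4.1]{Nualart20144266} and \cite{Pardo2017}, the idea is to work on the stochastic intervals $[0,\tau_N^\varepsilon)$, where $\tau_N^\varepsilon=\inf\{t>0:\min_{i\ne j}|\lambda_i^N(t)-\lambda_j^N(t)|\le\varepsilon\}$ and the eigenvalue maps $\varPhi_i$ are smooth along the trajectory, and to establish a negative-moment bound of the form $\sup_{t\in[0,T]}\bE[(\lambda_i^N(t)-\lambda_j^N(t))^{-q}]<\infty$ for every $q\ge 1$, uniformly in $\varepsilon$; letting $\varepsilon\downarrow 0$ then yields $\tau_N=+\infty$ almost surely. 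The difficulty is twofold: the Skorohod (divergence) integral term is not a martingale, so the Burkholder--Davis--Gundy inequality must be replaced by Meyer-type $L^p$ estimates for the divergence together with bounds on the Malliavin derivatives of $\varPhi_i(X^N)$; and the singular interaction $\sum_{j\ne i}(\lambda_i^N-\lambda_j^N)^{-1}(\lambda_i^N+\lambda_j^N)s^{2H-1}$ must be controlled a priori, for which a comparison with non-colliding squared Bessel particle systems (cf.\ \cite{Konig2001,Graczyk2019}) is convenient.

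Finally, with non-collision in hand, $\varPhi_i$ is smooth in a neighbourhood of the trajectory of $X^N$ for all $t\ge 0$, so the It\^o formula for the Skorohod integral applies globally and gives
\begin{align*}
	\lambda_i^N(t)=\lambda_i^N(0)+\sum_{k=1}^N\sum_{l=1}^p\int_0^t\frac{\partial\varPhi_i(X^N(s))}{\partial B_{k,l}}\,\delta B_{k,l}(s)+H\int_0^t\sum_{k=1}^N\sum_{l=1}^p\frac{\partial^2\varPhi_i(X^N(s))}{\partial B_{k,l}^2}\,s^{2H-1}\,ds,
\end{align*}
where the divergence term cannot be reduced to a single fractional Brownian motion because the L\'evy characterisation fails, so it is left as in \eqref{eq-SDE-fBm eigenvalue}. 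It then remains to identify the second-order coefficient: using the first- and second-order perturbation formulas for eigenvalues and the chain rule through $X^N=(B+A)(B+A)^\intercal$, so that $\partial X_{a,b}^N/\partial B_{k,l}=\delta_{a,k}(B_{b,l}+A_{b,l})+\delta_{b,k}(B_{a,l}+A_{a,l})$, a computation of the same form as the one behind Theorem~\ref{Thm-Wishart SDE} yields
\begin{align*}
	\sum_{k=1}^N\sum_{l=1}^p\frac{\partial^2\varPhi_i(X^N(s))}{\partial B_{k,l}^2}=2\left(p+\sum_{j:j\ne i}\frac{\lambda_i^N(s)+\lambda_j^N(s)}{\lambda_i^N(s)-\lambda_j^N(s)}\right),
\end{align*}
which is precisely the drift coefficient of \eqref{eq-SDE-Wishart eigenvalue} with $dt$ replaced by $2Hs^{2H-1}\,ds$. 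This gives the equation in the statement; the complex analogue follows by the same reasoning.
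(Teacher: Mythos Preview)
The paper is a survey and does not give its own proof of this theorem; it simply cites \cite[Theorems~3 and~4]{Pardo2017}. The proof approach it alludes to is the one spelled out just above for Theorem~\ref{Thm-eigenvalue SDE matrix fBm}: Malliavin calculus and the multidimensional It\^o formula for the Skorohod integral \cite[Theorem~2]{Pardo2017} to obtain the stochastic equation, and negative-moment bounds on eigenvalue gaps to get non-collision. Your proposal follows exactly this route, adapts it correctly to the Wishart structure $X^N=(B+A)(B+A)^\intercal$, and identifies the second-order drift via the same perturbation computation that underlies Theorem~\ref{Thm-Wishart SDE}. So your plan agrees with the paper's (implicit) proof and with \cite{Pardo2017}.

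One small caveat: your suggestion to control the singular interaction by ``comparison with non-colliding squared Bessel particle systems'' is not how \cite{Pardo2017} proceeds. There the negative-moment bound $\sup_{t\in[0,T]}\bE[(\lambda_i^N-\lambda_j^N)^{-q}]<\infty$ is obtained directly from the Skorohod-It\^o decomposition of $(\lambda_i^N-\lambda_j^N)^{-q}$ itself, using $L^p$ estimates for the divergence and the fact that the drift contribution has a favourable sign, rather than through a pathwise comparison with an auxiliary particle system (which would be delicate in the absence of a semimartingale structure). If you follow the direct route the argument closes cleanly; the comparison idea, while plausible in spirit, would require extra work to justify for $H>1/2$.
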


For the sequence of eigenvalue processes $\{\lambda_i^N(t)\}_{1\le i \le N}$ in Theorem \ref{Thm-eigenvalue SDE fractional Wishart}, denote the empirical measure of the normalized eigenvalue processes by
\begin{align*}
	L_N(t)(dx) = \dfrac{1}{N} \sum_{i=1}^N \delta_{\lambda_i^N(t)/N}(dx).
\end{align*}
For the case $H \in (1/2,1)$, the convergence in probability of the sequence $\{L_N(t)\}_{N \in \bN}$ to the Mar\v{c}enko-Pastur law was established in \cite{Pardo2017} under null initial condition $X^N(0) = 0$ by using Malliavin calculus and the tightness argument used in the proof of Theorem \ref{Thm-Wigner dynamic-weak}. As an extension, the convergence in probability of the sequence of eigenvalue empirical measure valued processes of Wishart-type matrices, whose entries are generated from the solution of \eqref{eq-Stratonovich SDE}, to the Mar\v{c}enko-Pastur law was established in \cite[Theorems 3.1, 3.2]{Song2020} by using fractional calculus and the argument used in the proof of Theorem \ref{Thm-Wigner dynamic-strong}.

\section{Matrix-valued stochastic processes driven by Brownian sheet}
\label{Sec:BrownianSheet}

In stochastic analysis, multi-parameter processes, which are also known as random fields, are a natural extension of one-parameter processes. Various interactions exist between the theory of multi-parameter processes and other disciplines, such as analysis, algebra, mathematical statistics and statistical mechanics. The most important multi-parameter process is the Brownian sheet. Recall that the standard $1$-dimensional ($2$-parameter) Brownian sheet $\{B(s,t), (s,t)\in\bR_+^2\}$ is a centred Gaussian random 
field with covariance function
\begin{align*}
	\bE \left[ B(s_1,t_1) B(s_2,t_2) \right] = (s_1 \wedge s_2)  (t_1 \wedge t_2).
\end{align*}
We refer to \cite{Davar2002} for more details on multi-parameter processes, in particular for properties of the Brownian sheet.

As shown in the review, there is a rich literature on eigenvalue processes of matrix-valued one-parameter processes. In contrast, the study on matrix-valued multi-parameter processes is just beginning. To our best knowledge, \cite{SongXiao2021} is the only reference on this topic.

Let $\{B_{i,j}(s,t), s,t \in \bR_+\}_{i,j \ge 1}$ be a family of independent standard $1$-dimensional Brownian sheets. The $N \times N$ real symmetric matrix-valued stochastic process $H^N(s,t) = \left( H_{i,j}^N(s,t) \right)_{1 \le i, j \le N}$ with entries
\begin{align} \label{eq-def Y}
	H_{i,j}^N(s,t) =
	\begin{cases}
		B_{i,j}(s,t), & i<j, \\
		\sqrt{2} B_{i,i}(s,t), & i=j,
	\end{cases}
\end{align}
is known as the real symmetric matrix Brownian sheet. Let $A^N$ be a $N \times N$ real symmetric deterministic matrix with distinct eigenvalues, and let
\begin{align} \label{eq-def-matrix Brownian sheet}
	X^N(s,t) = H^N(s,t) + A^N.
\end{align}

In the case of symmetric matrix Brownian motion, multi-dimensional It\^{o}'s formula for Brownian motions plays a key role when deriving the system of SDEs \eqref{eq-Dyson BM SDE} for eigenvalue processes (see \cite[Theorem 4.3.2]{Anderson2010}). The system of SDEs \eqref{eq-SDE-fBm eigenvalue} for eigenvalue processes of real symmetric matrix fractional Brownian motion also heavily relies on the multi-dimensional It\^{o}'s formula for the Skorohod integral with respect to fractional Brownian motion (see \cite{Nualart20144266}). For the case of Brownian sheet, though the one-dimensional It\^{o}'s formula has been established in \cite{Walsh1975}, the multi-dimensional version was not available until it was obtained in \cite{SongXiao2021} by using the stochastic calculus on the plane developed in \cite{Walsh1975}. Using the multi-dimensional It\^{o}'s formula, \cite{SongXiao2021} derived the system of SPDEs for the ordered eigenvalue processes of $X^N(s,t)$. Moreover, the convergence of the sequence of the eigenvalue empirical measure processes of $\frac{1}{\sqrt{d}} X^N(s,t)$ was also studied in \cite{SongXiao2021}.

\section{Open problems} \label{Sec:Open problems}

For the system of eigenvalue processes \eqref{SDE-eigenvalue}, the conditions of the uniqueness to \eqref{eq-limit equation Graczyk matrix} are still unknown. The uniqueness can be obtained under proper conditions for some special matrix-valued processes. It was established in \cite{Duvillard2001} for real symmetric and complex Hermitian matrix Brownian motion and complex Wishart process using large deviation technique. For real symmetric and complex Hermitian matrix Brownian motion, \cite{Anderson2010} obtained the uniqueness by analyzing the Stieltjes transform of the limit measure process. For real symmetric matrix OU process, the uniqueness was established by computing the moments of limiting measure in \cite{Chan1992}. In \cite{Rogers1993}, the uniqueness was established also by analyzing the Stieltjes transform of the limit measure process. For the Dyson's Brownian motion with general drift, the uniqueness was established in \cite{LLX20} by the entropy technique. In \cite{Cepa1997}, the uniqueness was established by transferring the equation of the Stieltjes transform of the limit measure to a PDE, which was solved via Fourier transform. It was established in \cite{Song2019} the uniqueness of self-similar solution for real symmetric and complex Hermitian matrix Brownian motion, Wishart process and Laguerre process. However, \cite[Proposition 5, 6]{Malecki-arxiv} provided an example of \eqref{eq-limit equation Graczyk matrix} which have at least two solutions. It is also of interest to know the conditions under which the equation \eqref{eq-limit equation Graczyk matrix} has unique solution that is supported on $[0,+\infty)$. Similarly, the conditions for the uniqueness of the limiting equation \eqref{eq-limit equation Graczyk particle} are still unknown.
	
For the fractional Wishart process considered in Theorem \ref{Thm-eigenvalue SDE fractional Wishart}, the fluctuation of the sequence of eigenvalue empirical measure processes around its limiting measure process is still unknown.
	
For the symmetric matrix valued Brownian sheet, the fluctuation of the sequence of eigenvalue empirical measure processes around its limiting measure process is also unknown.

\bibliographystyle{myjmva}
\bibliography{survey}

\begin{thebibliography}{72}
\expandafter\ifx\csname natexlab\endcsname\relax\def\natexlab#1{#1}\fi
\providecommand{\bibinfo}[2]{#2}
\ifx\xfnm\relax \def\xfnm[#1]{\unskip,\space#1}\fi
\bibitem[{Anderson et~al.(2010)Anderson, Guionnet and Zeitouni}]{Anderson2010}
\bibinfo{author}{G.~W. Anderson}, \bibinfo{author}{A.~Guionnet},
  \bibinfo{author}{O.~Zeitouni}, \bibinfo{title}{An {I}ntroduction to {R}andom
  {M}atrices}, \bibinfo{publisher}{Cambridge University Press, Cambridge},
  \bibinfo{year}{2010}.
\bibitem[{Bai and Silverstein(2010)}]{Bai2010}
\bibinfo{author}{Z.~Bai}, \bibinfo{author}{J.~W. Silverstein},
  \bibinfo{title}{Spectral {A}nalysis of {L}arge {D}imensional {R}andom
  {M}atrices}, Springer Series in Statistics, \bibinfo{publisher}{Springer, New
  York}, \bibinfo{edition}{second} edition, \bibinfo{year}{2010}.
\bibitem[{Berman and \"{O}nnheim(2019)}]{Berman2019}
\bibinfo{author}{R.~J. Berman}, \bibinfo{author}{M.~\"{O}nnheim},
  \bibinfo{title}{Propagation of chaos for a class of first order models with
  singular mean field interactions}, \bibinfo{journal}{SIAM J. Math. Anal.}
  \bibinfo{volume}{51} (\bibinfo{year}{2019}) \bibinfo{pages}{159--196}.
\bibitem[{Biane(1997)}]{Biane1997}
\bibinfo{author}{P.~Biane}, \bibinfo{title}{Free {B}rownian motion, free
  stochastic calculus and random matrices}, in: \bibinfo{booktitle}{Free
  probability theory ({W}aterloo, {ON}, 1995)}, volume~\bibinfo{volume}{12} of
  \text{\bibinfo{series}{Fields Inst. Commun.}}, \bibinfo{publisher}{Amer.
  Math. Soc., Providence, RI}, \bibinfo{year}{1997}, pp.
  \bibinfo{pages}{1--19}.
\bibitem[{Biane and Speicher(1998)}]{Biane1998}
\bibinfo{author}{P.~Biane}, \bibinfo{author}{R.~Speicher},
  \bibinfo{title}{Stochastic calculus with respect to free {B}rownian motion
  and analysis on {W}igner space}, \bibinfo{journal}{Probab. Theory Related
  Fields} \bibinfo{volume}{112} (\bibinfo{year}{1998})
  \bibinfo{pages}{373--409}.
\bibitem[{Bru(1989)}]{Bru1989}
\bibinfo{author}{M.-F. Bru}, \bibinfo{title}{Diffusions of perturbed principal
  component analysis}, \bibinfo{journal}{J. Multivariate Anal.}
  \bibinfo{volume}{29} (\bibinfo{year}{1989}) \bibinfo{pages}{127--136}.
\bibitem[{Bru(1991)}]{Bru91}
\bibinfo{author}{M.-F. Bru}, \bibinfo{title}{Wishart processes},
  \bibinfo{journal}{J. Theoret. Probab.} \bibinfo{volume}{4}
  (\bibinfo{year}{1991}) \bibinfo{pages}{725--751}.
\bibitem[{Cabanal-Duvillard(2001)}]{cd01}
\bibinfo{author}{T.~Cabanal-Duvillard}, \bibinfo{title}{Fluctuations de la loi
  empirique de grandes matrices al\'{e}atoires}, \bibinfo{journal}{Ann. Inst.
  H. Poincar\'{e} Probab. Statist.} \bibinfo{volume}{37} (\bibinfo{year}{2001})
  \bibinfo{pages}{373--402}.
\bibitem[{Cabanal-Duvillard and Guionnet(2001)}]{Duvillard2001}
\bibinfo{author}{T.~Cabanal-Duvillard}, \bibinfo{author}{A.~Guionnet},
  \bibinfo{title}{Large deviations upper bounds for the laws of matrix-valued
  processes and non-communicative entropies}, \bibinfo{journal}{Ann. Probab.}
  \bibinfo{volume}{29} (\bibinfo{year}{2001}) \bibinfo{pages}{1205--1261}.
\bibitem[{Cairoli and Walsh(1975)}]{Walsh1975}
\bibinfo{author}{R.~Cairoli}, \bibinfo{author}{J.~B. Walsh},
  \bibinfo{title}{Stochastic integrals in the plane}, \bibinfo{journal}{Acta
  Math.} \bibinfo{volume}{134} (\bibinfo{year}{1975})
  \bibinfo{pages}{111--183}.
\bibitem[{C\'{e}pa and L\'{e}pingle(1997)}]{Cepa1997}
\bibinfo{author}{E.~C\'{e}pa}, \bibinfo{author}{D.~L\'{e}pingle},
  \bibinfo{title}{Diffusing particles with electrostatic repulsion},
  \bibinfo{journal}{Probab. Theory Related Fields} \bibinfo{volume}{107}
  (\bibinfo{year}{1997}) \bibinfo{pages}{429--449}.
\bibitem[{Chan(1992)}]{Chan1992}
\bibinfo{author}{T.~Chan}, \bibinfo{title}{The {W}igner semi-circle law and
  eigenvalues of matrix-valued diffusions}, \bibinfo{journal}{Probab. Theory
  Related Fields} \bibinfo{volume}{93} (\bibinfo{year}{1992})
  \bibinfo{pages}{249--272}.
\bibitem[{Da~Fonseca et~al.(2014)Da~Fonseca, Grasselli and Ielpo}]{DaFonseca14}
\bibinfo{author}{J.~Da~Fonseca}, \bibinfo{author}{M.~Grasselli},
  \bibinfo{author}{F.~Ielpo}, \bibinfo{title}{Estimating the {W}ishart affine
  stochastic correlation model using the empirical characteristic function},
  \bibinfo{journal}{Stud. Nonlinear Dyn. Econom.} \bibinfo{volume}{18}
  (\bibinfo{year}{2014}) \bibinfo{pages}{253--289}.
\bibitem[{Diaz et~al.(2020)Diaz, Jaramillo and Pardo}]{Jaramillo2020}
\bibinfo{author}{M.~Diaz}, \bibinfo{author}{A.~Jaramillo},
  \bibinfo{author}{J.~C. Pardo}, \bibinfo{title}{Fluctuations for matrix-valued
  {G}aussian processes}, \bibinfo{journal}{arXiv preprint arXiv:2001.03718}
  (\bibinfo{year}{2020}).
\bibitem[{Dynkin(1961)}]{Dynkin1961}
\bibinfo{author}{E.~B. Dynkin}, \bibinfo{title}{Non-negative eigenfunctions of
  the {L}aplace-{B}eltrami operator and {B}rownian motion in certain symmetric
  spaces}, \bibinfo{journal}{Dokl. Akad. Nauk SSSR} \bibinfo{volume}{141}
  (\bibinfo{year}{1961}) \bibinfo{pages}{288--291}.
\bibitem[{Dyson(1962)}]{Dyson62}
\bibinfo{author}{F.~J. Dyson}, \bibinfo{title}{A {B}rownian-motion model for
  the eigenvalues of a random matrix}, \bibinfo{journal}{J. Mathematical Phys.}
  \bibinfo{volume}{3} (\bibinfo{year}{1962}) \bibinfo{pages}{1191--1198}.
\bibitem[{Fernique(1970)}]{Fernique1970}
\bibinfo{author}{X.~Fernique}, \bibinfo{title}{Int\'{e}grabilit\'{e} des
  vecteurs gaussiens}, \bibinfo{journal}{C. R. Acad. Sci. Paris S\'{e}r. A-B}
  \bibinfo{volume}{270} (\bibinfo{year}{1970}) \bibinfo{pages}{A1698--A1699}.
\bibitem[{Fonseca et~al.(2007)Fonseca, Grasselli and Tebaldi}]{DaFonseca08}
\bibinfo{author}{J.~D. Fonseca}, \bibinfo{author}{M.~Grasselli},
  \bibinfo{author}{C.~Tebaldi}, \bibinfo{title}{Option pricing when
  correlations are stochastic: an analytical framework},
  \bibinfo{journal}{Review of Derivatives Research} \bibinfo{volume}{10}
  (\bibinfo{year}{2007}) \bibinfo{pages}{151--180}.
\bibitem[{Gnoatto(2012)}]{Gnoatto12}
\bibinfo{author}{A.~Gnoatto}, \bibinfo{title}{The {W}ishart short rate model},
  \bibinfo{journal}{Int. J. Theor. Appl. Finance} \bibinfo{volume}{15}
  (\bibinfo{year}{2012}) \bibinfo{pages}{1250056, 24}.
\bibitem[{Gnoatto and Grasselli(2014)}]{Gnoatto14}
\bibinfo{author}{A.~Gnoatto}, \bibinfo{author}{M.~Grasselli},
  \bibinfo{title}{An affine multicurrency model with stochastic volatility and
  stochastic interest rates}, \bibinfo{journal}{SIAM J. Financial Math.}
  \bibinfo{volume}{5} (\bibinfo{year}{2014}) \bibinfo{pages}{493--531}.
\bibitem[{G\"{o}ing-Jaeschke and Yor(2003)}]{SurveyBessel03}
\bibinfo{author}{A.~G\"{o}ing-Jaeschke}, \bibinfo{author}{M.~Yor},
  \bibinfo{title}{A survey and some generalizations of {B}essel processes},
  \bibinfo{journal}{Bernoulli} \bibinfo{volume}{9} (\bibinfo{year}{2003})
  \bibinfo{pages}{313--349}.
\bibitem[{Gouri\'eroux(2006)}]{Gourieroux06}
\bibinfo{author}{C.~Gouri\'eroux}, \bibinfo{title}{Continuous time {W}ishart
  process for stochastic risk}, \bibinfo{journal}{Econometric Rev.}
  \bibinfo{volume}{25} (\bibinfo{year}{2006}) \bibinfo{pages}{177--217}.
\bibitem[{Gouri\'eroux and Sufana(2010)}]{Gourieroux10a}
\bibinfo{author}{C.~Gouri\'eroux}, \bibinfo{author}{R.~Sufana},
  \bibinfo{title}{Derivative pricing with {W}ishart multivariate stochastic
  volatility}, \bibinfo{journal}{J. Bus. Econom. Statist.} \bibinfo{volume}{28}
  (\bibinfo{year}{2010}) \bibinfo{pages}{438--451}.
\bibitem[{Graczyk and Ma{\l}ecki(2013)}]{Graczyk2013}
\bibinfo{author}{P.~Graczyk}, \bibinfo{author}{J.~Ma{\l}ecki},
  \bibinfo{title}{Multidimensional {Y}amada-{W}atanabe theorem and its
  applications to particle systems}, \bibinfo{journal}{J. Math. Phys.}
  \bibinfo{volume}{54} (\bibinfo{year}{2013}) \bibinfo{pages}{021503, 15}.
\bibitem[{Graczyk and Ma{\l}ecki(2014)}]{Graczyk2014}
\bibinfo{author}{P.~Graczyk}, \bibinfo{author}{J.~Ma{\l}ecki},
  \bibinfo{title}{Strong solutions of non-colliding particle systems},
  \bibinfo{journal}{Electron. J. Probab.} \bibinfo{volume}{19}
  (\bibinfo{year}{2014}) \bibinfo{pages}{no. 119, 21}.
\bibitem[{Graczyk and Ma{\l}ecki(2019)}]{Graczyk2019}
\bibinfo{author}{P.~Graczyk}, \bibinfo{author}{J.~Ma{\l}ecki},
  \bibinfo{title}{On squared {B}essel particle systems},
  \bibinfo{journal}{Bernoulli} \bibinfo{volume}{25} (\bibinfo{year}{2019})
  \bibinfo{pages}{828--847}.
\bibitem[{Hiai and Petz(2000)}]{HP00}
\bibinfo{author}{F.~Hiai}, \bibinfo{author}{D.~Petz}, \bibinfo{title}{The
  {S}emicircle {L}aw, {F}ree {R}andom {V}ariables and {E}ntropy},
  volume~\bibinfo{volume}{77} of \text{\bibinfo{series}{Mathematical Surveys
  and Monographs}}, \bibinfo{publisher}{American Mathematical Society,
  Providence, RI}, \bibinfo{year}{2000}.
\bibitem[{Hu et~al.(2009)Hu, Nualart and Song}]{Hu2009}
\bibinfo{author}{Y.~Hu}, \bibinfo{author}{D.~Nualart},
  \bibinfo{author}{J.~Song}, \bibinfo{title}{Fractional martingales and
  characterization of the fractional {B}rownian motion}, \bibinfo{journal}{Ann.
  Probab.} \bibinfo{volume}{37} (\bibinfo{year}{2009})
  \bibinfo{pages}{2404--2430}.
\bibitem[{Ikeda and Watanabe(1981)}]{Ikeda1981}
\bibinfo{author}{N.~Ikeda}, \bibinfo{author}{S.~Watanabe},
  \bibinfo{title}{Stochastic {D}ifferential {E}quations and {D}iffusion
  {P}rocesses}, volume~\bibinfo{volume}{24} of
  \text{\bibinfo{series}{North-Holland Mathematical Library}},
  \bibinfo{publisher}{North-Holland Publishing Co., Amsterdam-New York;
  Kodansha, Ltd., Tokyo}, \bibinfo{year}{1981}.
\bibitem[{Jabin and Wang(2018)}]{Jabin2018}
\bibinfo{author}{P.-E. Jabin}, \bibinfo{author}{Z.~Wang},
  \bibinfo{title}{Quantitative estimates of propagation of chaos for stochastic
  systems with {$W^{-1,\infty}$} kernels}, \bibinfo{journal}{Invent. Math.}
  \bibinfo{volume}{214} (\bibinfo{year}{2018}) \bibinfo{pages}{523--591}.
\bibitem[{Jaramillo et~al.(2019)Jaramillo, Pardo and P\'{e}rez}]{Jaramillo2019}
\bibinfo{author}{A.~Jaramillo}, \bibinfo{author}{J.~C. Pardo},
  \bibinfo{author}{J.~L. P\'{e}rez}, \bibinfo{title}{Convergence of the
  empirical spectral distribution of {G}aussian matrix-valued processes},
  \bibinfo{journal}{Electron. J. Probab.} \bibinfo{volume}{24}
  (\bibinfo{year}{2019}) \bibinfo{pages}{Paper No. 10, 22}.
\bibitem[{Kahn(2020)}]{kahn2020}
\bibinfo{author}{E.~Kahn}, \bibinfo{title}{About the eigenvalues of {W}ishart
  processes}, \bibinfo{journal}{arXiv preprint arXiv:2009.09874}
  (\bibinfo{year}{2020}).
\bibitem[{Karatzas and Shreve(1991)}]{Karatzas1998}
\bibinfo{author}{I.~Karatzas}, \bibinfo{author}{S.~E. Shreve},
  \bibinfo{title}{Brownian {M}otion and {S}tochastic {C}alculus}, volume
  \bibinfo{volume}{113} of \text{\bibinfo{series}{Graduate Texts in
  Mathematics}}, \bibinfo{publisher}{Springer-Verlag, New York},
  \bibinfo{edition}{second} edition, \bibinfo{year}{1991}.
\bibitem[{Katori(2015)}]{Katori16}
\bibinfo{author}{M.~Katori}, \bibinfo{title}{Bessel {P}rocesses,
  {S}chramm-{L}oewner {E}volution, and the {D}yson {M}odel},
  volume~\bibinfo{volume}{11} of \text{\bibinfo{series}{SpringerBriefs in
  Mathematical Physics}}, \bibinfo{publisher}{Springer, [Singapore]},
  \bibinfo{year}{2015}.
\bibitem[{Katori and Tanemura(2004)}]{Katori2004}
\bibinfo{author}{M.~Katori}, \bibinfo{author}{H.~Tanemura},
  \bibinfo{title}{Symmetry of matrix-valued stochastic processes and
  noncolliding diffusion particle systems}, \bibinfo{journal}{J. Math. Phys.}
  \bibinfo{volume}{45} (\bibinfo{year}{2004}) \bibinfo{pages}{3058--3085}.
\bibitem[{Katori and Tanemura(2007)}]{Katori2007}
\bibinfo{author}{M.~Katori}, \bibinfo{author}{H.~Tanemura},
  \bibinfo{title}{Noncolliding {B}rownian motion and determinantal processes},
  \bibinfo{journal}{J. Stat. Phys.} \bibinfo{volume}{129}
  (\bibinfo{year}{2007}) \bibinfo{pages}{1233--1277}.
\bibitem[{Katori and Tanemura(2011)}]{Katori2011}
\bibinfo{author}{M.~Katori}, \bibinfo{author}{H.~Tanemura},
  \bibinfo{title}{Noncolliding squared {B}essel processes},
  \bibinfo{journal}{J. Stat. Phys.} \bibinfo{volume}{142}
  (\bibinfo{year}{2011}) \bibinfo{pages}{592--615}.
\bibitem[{Kendall(1990)}]{Kendall1990}
\bibinfo{author}{W.~S. Kendall}, \bibinfo{title}{The diffusion of {E}uclidean
  shape}, in: \bibinfo{booktitle}{Disorder in physical systems}, Oxford Sci.
  Publ., \bibinfo{publisher}{Oxford Univ. Press, New York},
  \bibinfo{year}{1990}, pp. \bibinfo{pages}{203--217}.
\bibitem[{Khoshnevisan(2002)}]{Davar2002}
\bibinfo{author}{D.~Khoshnevisan}, \bibinfo{title}{Multiparameter {P}rocesses},
  Springer Monographs in Mathematics, \bibinfo{publisher}{Springer-Verlag, New
  York}, \bibinfo{year}{2002}. \bibinfo{note}{An introduction to random
  fields}.
\bibitem[{Kondor and Jebara(2007)}]{Kondor07}
\bibinfo{author}{R.~Kondor}, \bibinfo{author}{T.~Jebara},
  \bibinfo{title}{Gaussian and {W}ishart hyperkernels}, in:
  \bibinfo{booktitle}{Advances in Neural Information Processing Systems},
  volume~\bibinfo{volume}{19}, \bibinfo{publisher}{MIT Press},
  \bibinfo{year}{2007}, pp. \bibinfo{pages}{729--736}.
\bibitem[{K\"{o}nig and O'Connell(2001)}]{Konig2001}
\bibinfo{author}{W.~K\"{o}nig}, \bibinfo{author}{N.~O'Connell},
  \bibinfo{title}{Eigenvalues of the {L}aguerre process as non-colliding
  squared {B}essel processes}, \bibinfo{journal}{Electron. Comm. Probab.}
  \bibinfo{volume}{6} (\bibinfo{year}{2001}) \bibinfo{pages}{107--114}.
\bibitem[{Li et~al.(2016)Li, Zhao, Deng and Xu}]{Li16}
\bibinfo{author}{J.~Li}, \bibinfo{author}{B.~Zhao}, \bibinfo{author}{C.~Deng},
  \bibinfo{author}{R.~Y.~D. Xu}, \bibinfo{title}{Time varying metric learning
  for visual tracking}, \bibinfo{journal}{Pattern Recognition Letters}
  \bibinfo{volume}{80} (\bibinfo{year}{2016}) \bibinfo{pages}{157--164}.
\bibitem[{Li et~al.(2020)Li, Li and Xie}]{LLX20}
\bibinfo{author}{S.~Li}, \bibinfo{author}{X.-D. Li}, \bibinfo{author}{Y.-X.
  Xie}, \bibinfo{title}{On the law of large numbers for the empirical measure
  process of generalized {D}yson {B}rownian motion}, \bibinfo{journal}{J. Stat.
  Phys.} \bibinfo{volume}{181} (\bibinfo{year}{2020})
  \bibinfo{pages}{1277--1305}.
\bibitem[{Li et~al.(2009)Li, Zhang and Yeung}]{Li09}
\bibinfo{author}{W.-J. Li}, \bibinfo{author}{Z.~Zhang}, \bibinfo{author}{D.-Y.
  Yeung}, \bibinfo{title}{Latent {W}ishart {P}rocesses for {R}elational
  {K}ernel {L}earning}, in: \bibinfo{booktitle}{Proceedings of the Twelth
  International Conference on Artificial Intelligence and Statistics},
  volume~\bibinfo{volume}{5} of \text{\bibinfo{series}{Proceedings of Machine
  Learning Research}}, \bibinfo{publisher}{PMLR}, \bibinfo{address}{Hilton
  Clearwater Beach Resort, Clearwater Beach, Florida USA},
  \bibinfo{year}{2009}, pp. \bibinfo{pages}{336--343}.
\bibitem[{{Ma{\l}ecki} and {P{\'e}rez}(2019)}]{Malecki-arxiv}
\bibinfo{author}{J.~{Ma{\l}ecki}}, \bibinfo{author}{J.~L. {P{\'e}rez}},
  \bibinfo{title}{{Universality classes for general random matrix flows}},
  \bibinfo{journal}{arXiv e-prints}  (\bibinfo{year}{2019})
  \bibinfo{pages}{arXiv:1901.02841}.
\bibitem[{Mayerhofer et~al.(2011)Mayerhofer, Pfaffel and
  Stelzer}]{Mayerhofer2011}
\bibinfo{author}{E.~Mayerhofer}, \bibinfo{author}{O.~Pfaffel},
  \bibinfo{author}{R.~Stelzer}, \bibinfo{title}{On strong solutions for
  positive definite jump diffusions}, \bibinfo{journal}{Stochastic Process.
  Appl.} \bibinfo{volume}{121} (\bibinfo{year}{2011})
  \bibinfo{pages}{2072--2086}.
\bibitem[{McKean(2005)}]{McKean}
\bibinfo{author}{H.~P. McKean}, \bibinfo{title}{Stochastic {I}ntegrals},
  \bibinfo{publisher}{AMS Chelsea Publishing, Providence, RI},
  \bibinfo{year}{2005}. \bibinfo{note}{Reprint of the 1969 edition, with
  errata}.
\bibitem[{Mehta(2004)}]{Mehta2004}
\bibinfo{author}{M.~L. Mehta}, \bibinfo{title}{Random {M}atrices}, volume
  \bibinfo{volume}{142} of \text{\bibinfo{series}{Pure and Applied Mathematics
  (Amsterdam)}}, \bibinfo{publisher}{Elsevier/Academic Press, Amsterdam},
  \bibinfo{edition}{third} edition, \bibinfo{year}{2004}.
\bibitem[{Mingo and Speicher(2017)}]{MS17}
\bibinfo{author}{J.~A. Mingo}, \bibinfo{author}{R.~Speicher},
  \bibinfo{title}{Free {P}robability and {R}andom {M}atrices},
  volume~\bibinfo{volume}{35} of \text{\bibinfo{series}{Fields Institute
  Monographs}}, \bibinfo{publisher}{Springer, New York; Fields Institute for
  Research in Mathematical Sciences, Toronto, ON}, \bibinfo{year}{2017}.
\bibitem[{Norris et~al.(1986)Norris, Rogers and Williams}]{Norris1986}
\bibinfo{author}{J.~R. Norris}, \bibinfo{author}{L.~C.~G. Rogers},
  \bibinfo{author}{D.~Williams}, \bibinfo{title}{Brownian motions of
  ellipsoids}, \bibinfo{journal}{Trans. Amer. Math. Soc.} \bibinfo{volume}{294}
  (\bibinfo{year}{1986}) \bibinfo{pages}{757--765}.
\bibitem[{Nourdin and Taqqu(2014)}]{Nourdin2014}
\bibinfo{author}{I.~Nourdin}, \bibinfo{author}{M.~S. Taqqu},
  \bibinfo{title}{Central and non-central limit theorems in a free probability
  setting}, \bibinfo{journal}{J. Theoret. Probab.} \bibinfo{volume}{27}
  (\bibinfo{year}{2014}) \bibinfo{pages}{220--248}.
\bibitem[{Nualart(2006)}]{Nualart2006}
\bibinfo{author}{D.~Nualart}, \bibinfo{title}{The {M}alliavin {C}alculus and
  {R}elated {T}opics}, Probability and its Applications (New York),
  \bibinfo{publisher}{Springer-Verlag, Berlin}, \bibinfo{edition}{second}
  edition, \bibinfo{year}{2006}.
\bibitem[{Nualart and P\'{e}rez-Abreu(2014)}]{Nualart20144266}
\bibinfo{author}{D.~Nualart}, \bibinfo{author}{V.~P\'{e}rez-Abreu},
  \bibinfo{title}{On the eigenvalue process of a matrix fractional {B}rownian
  motion}, \bibinfo{journal}{Stochastic Process. Appl.} \bibinfo{volume}{124}
  (\bibinfo{year}{2014}) \bibinfo{pages}{4266--4282}.
\bibitem[{Orihara(1970)}]{Orihara1970}
\bibinfo{author}{A.~Orihara}, \bibinfo{title}{On random ellipsoid},
  \bibinfo{journal}{J. Fac. Sci. Univ. Tokyo Sect. I} \bibinfo{volume}{17}
  (\bibinfo{year}{1970}) \bibinfo{pages}{73--85}.
\bibitem[{Pardo et~al.(2016)Pardo, P\'{e}rez and P\'{e}rez-Abreu}]{Pardo2016}
\bibinfo{author}{J.~C. Pardo}, \bibinfo{author}{J.-L. P\'{e}rez},
  \bibinfo{author}{V.~P\'{e}rez-Abreu}, \bibinfo{title}{A random matrix
  approximation for the non-commutative fractional {B}rownian motion},
  \bibinfo{journal}{J. Theoret. Probab.} \bibinfo{volume}{29}
  (\bibinfo{year}{2016}) \bibinfo{pages}{1581--1598}.
\bibitem[{Pardo et~al.(2017)Pardo, P\'{e}rez and P\'{e}rez-Abreu}]{Pardo2017}
\bibinfo{author}{J.~C. Pardo}, \bibinfo{author}{J.-L. P\'{e}rez},
  \bibinfo{author}{V.~P\'{e}rez-Abreu}, \bibinfo{title}{On the non-commutative
  fractional {W}ishart process}, \bibinfo{journal}{J. Funct. Anal.}
  \bibinfo{volume}{272} (\bibinfo{year}{2017}) \bibinfo{pages}{339--362}.
\bibitem[{P\'{e}rez-Abreu and Tudor(2007)}]{pt07}
\bibinfo{author}{V.~P\'{e}rez-Abreu}, \bibinfo{author}{C.~Tudor},
  \bibinfo{title}{Functional limit theorems for trace processes in a {D}yson
  {B}rownian motion}, \bibinfo{journal}{Commun. Stoch. Anal.}
  \bibinfo{volume}{1} (\bibinfo{year}{2007}) \bibinfo{pages}{415--428}.
\bibitem[{Perez-Abreu and Tudor(2009)}]{pt09}
\bibinfo{author}{V.~Perez-Abreu}, \bibinfo{author}{C.~Tudor},
  \bibinfo{title}{On the traces of {L}aguerre processes},
  \bibinfo{journal}{Electron. J. Probab.} \bibinfo{volume}{14}
  (\bibinfo{year}{2009}) \bibinfo{pages}{no. 76, 2241--2263}.
\bibitem[{Rogers and Shi(1993)}]{Rogers1993}
\bibinfo{author}{L.~C.~G. Rogers}, \bibinfo{author}{Z.~Shi},
  \bibinfo{title}{Interacting {B}rownian particles and the {W}igner law},
  \bibinfo{journal}{Probab. Theory Related Fields} \bibinfo{volume}{95}
  (\bibinfo{year}{1993}) \bibinfo{pages}{555--570}.
\bibitem[{Samko et~al.(1993)Samko, Kilbas and Marichev}]{skm93}
\bibinfo{author}{S.~G. Samko}, \bibinfo{author}{A.~A. Kilbas},
  \bibinfo{author}{O.~I. Marichev}, \bibinfo{title}{{F}ractional {I}ntegrals
  and {D}erivatives}, \bibinfo{publisher}{Gordon and Breach Science Publishers,
  Yverdon}, \bibinfo{year}{1993}.
\bibitem[{Sch\"olkopf and Smola(2002)}]{Scholkopf02}
\bibinfo{author}{B.~Sch\"olkopf}, \bibinfo{author}{A.~J. Smola},
  \bibinfo{title}{Learning with {K}ernels}, \bibinfo{publisher}{The MIT Press},
  \bibinfo{year}{2002}. \bibinfo{note}{Support Vector Machines, Regularization,
  Optimization, and Beyond}.
\bibitem[{Serfaty(2020)}]{Serfaty2020}
\bibinfo{author}{S.~Serfaty}, \bibinfo{title}{Mean field limit for
  {C}oulomb-type flows}, \bibinfo{journal}{Duke Math. J.} \bibinfo{volume}{169}
  (\bibinfo{year}{2020}) \bibinfo{pages}{2887--2935}. \bibinfo{note}{With an
  appendix by Mitia Duerinckx and Serfaty}.
\bibitem[{Song et~al.(2021{\natexlab{a}})Song, Xiao and Yuan}]{SongXiao2021}
\bibinfo{author}{J.~Song}, \bibinfo{author}{Y.~Xiao},
  \bibinfo{author}{W.~Yuan}, \bibinfo{title}{On eigenvalues of the {B}rownian
  sheet matrix}, \bibinfo{journal}{arXiv preprint arXiv:2103.07378}
  (\bibinfo{year}{2021}{\natexlab{a}}).
\bibitem[{Song et~al.(2020{\natexlab{a}})Song, Yao and Yuan}]{Song2020}
\bibinfo{author}{J.~Song}, \bibinfo{author}{J.~Yao}, \bibinfo{author}{W.~Yuan},
  \bibinfo{title}{Eigenvalue distributions of high-dimensional matrix processes
  driven by fractional {B}rownian motion}, \bibinfo{journal}{arXiv preprint
  arXiv:2001.09552}  (\bibinfo{year}{2020}{\natexlab{a}}).
\bibitem[{Song et~al.(2020{\natexlab{b}})Song, Yao and Yuan}]{Song2019}
\bibinfo{author}{J.~Song}, \bibinfo{author}{J.~Yao}, \bibinfo{author}{W.~Yuan},
  \bibinfo{title}{High-dimensional limits of eigenvalue distributions for
  general {W}ishart process}, \bibinfo{journal}{Ann. Appl. Probab.}
  \bibinfo{volume}{30} (\bibinfo{year}{2020}{\natexlab{b}})
  \bibinfo{pages}{1642--1668}.
\bibitem[{Song et~al.(2021{\natexlab{b}})Song, Yao and Yuan}]{Song19}
\bibinfo{author}{J.~Song}, \bibinfo{author}{J.~Yao}, \bibinfo{author}{W.~Yuan},
  \bibinfo{title}{High-dimensional central limit theorems for a class of
  particle systems}, \bibinfo{journal}{Electron. J. Probab.}
  \bibinfo{volume}{26} (\bibinfo{year}{2021}{\natexlab{b}})
  \bibinfo{pages}{1--33}.
\bibitem[{Tao(2012)}]{Tao2012}
\bibinfo{author}{T.~Tao}, \bibinfo{title}{Topics in {R}andom {M}atrix
  {T}heory}, volume \bibinfo{volume}{132} of \text{\bibinfo{series}{Graduate
  Studies in Mathematics}}, \bibinfo{publisher}{American Mathematical Society,
  Providence, RI}, \bibinfo{year}{2012}.
\bibitem[{Voiculescu(1991)}]{Voi91}
\bibinfo{author}{D.~Voiculescu}, \bibinfo{title}{Limit laws for random matrices
  and free products}, \bibinfo{journal}{Invent. Math.} \bibinfo{volume}{104}
  (\bibinfo{year}{1991}) \bibinfo{pages}{201--220}.
\bibitem[{Voiculescu et~al.(1992)Voiculescu, Dykema and Nica}]{VDN92}
\bibinfo{author}{D.~V. Voiculescu}, \bibinfo{author}{K.~J. Dykema},
  \bibinfo{author}{A.~Nica}, \bibinfo{title}{Free {R}andom {V}ariables},
  volume~\bibinfo{volume}{1} of \text{\bibinfo{series}{CRM Monograph Series}},
  \bibinfo{publisher}{American Mathematical Society, Providence, RI},
  \bibinfo{year}{1992}.
\bibitem[{Wu et~al.(2018)Wu, Ghosh, Ku and Bloomfield}]{Wu18}
\bibinfo{author}{S.-J. Wu}, \bibinfo{author}{S.~K. Ghosh},
  \bibinfo{author}{Y.-C. Ku}, \bibinfo{author}{P.~Bloomfield},
  \bibinfo{title}{Dynamic correlation multivariate stochastic volatility with
  latent factors}, \bibinfo{journal}{Stat. Neerl.} \bibinfo{volume}{72}
  (\bibinfo{year}{2018}) \bibinfo{pages}{48--69}.
\bibitem[{Zambotti(2017)}]{Zambotti17}
\bibinfo{author}{L.~Zambotti}, \bibinfo{title}{Bessel {P}rocesses},
  \bibinfo{journal}{Lecture Notes in Mathematics}  (\bibinfo{year}{2017})
  \bibinfo{pages}{31--57}.
\bibitem[{Zhang et~al.(2006)Zhang, Kwok and Yeung}]{Zhang06}
\bibinfo{author}{Z.~Zhang}, \bibinfo{author}{J.~T. Kwok},
  \bibinfo{author}{D.-Y. Yeung}, \bibinfo{title}{Model-based transductive
  learning of the kernel matrix}, \bibinfo{journal}{Machine Learning}
  \bibinfo{volume}{63} (\bibinfo{year}{2006}) \bibinfo{pages}{69--101}.

\end{thebibliography}

\end{document}